\newtheorem{theorem}{Theorem}[section]
\newtheorem{proposition}[theorem]{Proposition}
\newtheorem{lemma}[theorem]{Lemma}
\newtheorem{claim}[theorem]{Claim}
\newtheorem{definition}[theorem]{Definition}
\newtheorem{fact}[theorem]{Fact}
\newtheorem{corollary}[theorem]{Corollary}
\newtheorem{remark}[theorem]{Remark}
\newcommand{\qed}{\relax\ifmmode\eqno\B ox\else\mbox{}\quad\nolinebreak\hfill$\Box$\smallskip\fi}
\newenvironment{proof}{\noindent{\it Proof} :}{\qed}
\newenvironment{prclaim}{\noindent{\it Proof of the Claim} :}{\qed}
\newcommand{\nc}{\newcommand}
\newcommand{\Kpinf}{K^{p^{\infty}}}
\nc{\K}{K[X_{\infy}]}
\nc{\KKn}{K[X_{1 \leq n},...,X_{k \leq n}]}
\nc{\KKK}{K[X_{1\infty},\ldots,X_{k\infty}]}
\nc{\KxK}{K[x_{1\infty},\ldots,x_{k\infty}]}
\nc{\Iacf}{I_{acf}}
\nc{\scftype}{(scf)-type\  }
\nc{\acftype}{(acf)-type\ }
\nc{\scftypes}{(scf)-types\ }
\nc{\acftypes}{(acf)-types\ }
\nc{\tacf}{t_{acf}}
\font\help=cmr7
\def\strsubset{\hbox{$\subseteq\kern-.8em\lower.2em\hbox{\help
\char'57}$}\,}
\def\dnfo{\,\raise.2em\hbox{$\,\mathrel|\kern-.9em\lower.35em\hbox
{$\smile$}$}}
\def\dnf#1{\lower.9em\hbox{$\buildrel\dnfo\over{  #1}$}}
\def\dfo{\;\raise.2em\hbox{$\mathrel|\kern-.9em\lower.35em\hbox{$\smile$}
\kern-.7em\hbox{\char'57}$}\;}
\def\df#1{\lower.9em\hbox{$\buildrel\dfo\over{ #1}$}}
\newcommand{\Fp}{\mathbb{F}_p}
\newcommand{\Gm}{\mathbb{G}_m}
\newcommand{\U}{{\cal U}}
\title{On function field Mordell-Lang: the semiabelian case and the socle theorem}
\author{Franck Benoist\thanks{Partially supported by  ANR ValCoMo
    (ANR-13-BS01-0006) Valuations, Combinatorics and Model Theory}
   \and Elisabeth
  Bouscaren\footnotemark[1] \and Anand Pillay\thanks{Partially supported by  NSF grant DMS-1360702} }
\begin{document}

\maketitle

\begin{abstract} In this paper we complete our ``second generation" model-theoretic account of the function field Mordell-Lang conjecture, where
  we avoid appeals to dichotomy theorems for (generalized) Zariski geometries.  In the current paper we reduce the semiabelian case to the abelian case using model-theoretic tools. We also use our results from \cite{BBP2} to prove modularity of the ``Manin kernels"  (over $\Fp(t)^{sep}$ in positive characteristic and over ${{\mathbb C}(t)}^{alg}$  in characteristic $0$ ). 
\end{abstract}

\noindent 2010 Mathematics Subject Classification 03C45, 03C98 (primary), 11G10, 14G05 (secondary).

\section{Introduction}
This paper is the third in a series of papers by the authors, of which the first two are \cite{BBP1}, \cite{BBP2},  revisiting the model-theoretic approach to function field Mordell-Lang initiated by Hrushovski \cite{Hrushovski}, and where  positive characteristic is the main case of interest. In \cite{BBP1} we gave, among other things, a counterexample to a claim, implicit in \cite{Hrushovski},  that a key model-theoretic object $G^{\sharp}$ attached to a semiabelian variety $G$, had finite relative Morley rank.  In \cite{BBP2} and the current paper, the main aim is to give a model-theoretic proof of the main theorem (function field Mordell-Lang) avoiding the appeal in \cite{Hrushovski} to results around Zariski geometries.   Our reasons are (i)  this appeal to Zariski geometries is something of a black box, which is difficult for model theorists and   impenetrable for non model-theorists and (ii) in the positive characteristic case, it is ``type-definable" Zariski geometries which are used and for which there is no really comprehensive exposition, although the proofs in \cite{Hrushovski} are correct.  In \cite{BBP2} we concentrated on the abelian variety case, and  found a new model-theoretic proof when the  ground field $K$ is $\Fp(t)^{sep}$, by reduction to a Manin-Mumford statement. 
  The new contributions  of the current paper are the following:

  (i) We give a reduction of function field Mordell-Lang for semiabelian varieties to the case of traceless abelian varieties in all characteristics and without any assumptions on the ground field. This result,  together with \cite{BBP2},  gives a self-contained model theoretic proof of function field Mordell-Lang for semiabelian varieties over $\Fp(t)^{sep}$, by reduction to a Manin-Mumford statement (and avoids  appeal to Zariski geometries).

  (ii) We prove the ``weak socle theorem" from \cite{Hrushovski} but in the general form that is really needed both in \cite{Hrushovski} and in the current paper (namely the finite $U$-rank rather than finite Morley rank context).

  (iii) We introduce a new algebraic-geometric object, in all characteristics,  the algebraic $K_{0}$-socle of a semiabelian variety $G$ over $K_{1}$, where $K_{0}< K_{1}$ are algebraically closed fields. We use this socle to reduce function field Mordell-Lang for semiabelian varieties to the case of semiabelian varieties which are isogenous to a direct product of a torus and an abelian variety.  

  (iv) We give a new proof of the modularity (or $1$-basedness) of $A^{\sharp}$ when $A$ is simple with trace $0$ over the constants, for the cases from \cite{BBP2}, where we derived Mordell-Lang for abelian varieties over ${\Fp(t)}^{sep}$ or over ${{\mathbb C}(t)}^{alg}$ without appealing to the dichotomy theorem for Zariski geometries. 


Here is a brief description of what is covered, section-by-section. 
In Section 2 we state and prove the ``weak socle theorem" for type-definable groups of finite $U$-rank. In Section 3 we apply this to the case where the type-definable group is $G^{\sharp}$ for $G$ a semiabelian variety, and we also introduce the algebraic socle. In Section 4 we reduce Mordell-Lang to the case of the algebraic socle, and then prove the main result of the paper. In Section 5 we discuss model theoretic properties of the socle of $G^{\sharp}$, in particular proving quantifier elimination and finite relative Morley rank. In the appendix we give proofs of {\em modularity} (or $1$-basedness) of $A^{\sharp}$ as mentioned earlier.

 We assume  familiarity with model theory, basic stability, as well as
differentially and separably closed fields.  The book \cite{MMP} is a
reasonable reference, as well as \cite{Pillaybook} for more on
stability theory. Definability means with parameters unless we say
otherwise.  

We finish the introduction with some  more comments around the socle theorem. 
The socle is used, via the weak socle theorem  in both characteristics  by Hrushovski to
prove function field Mordell-Lang directly for semiabelian varieties. 
For an abelian variety $A$, the  socle of  $A^\sharp$ is $A^\sharp $ itself, but still the weak
socle theorem is needed in characteristic $0$ for our  model theoretic proofs for example in \cite{BBP2}. But this weak socle theorem is
{\em not} needed
for abelian varieties in characteristic $p$. 
It is needed though for semiabelian  varieties even in characteristic  $p$, and in
fact in this paper we aim to explain why by showing how the use of the
socle allows us to reduce  to semiabelian varieties of a particular 
pleasant form. 

We will see in particular that, in characteristic $p$,  the ``bad behavior'' from the point of
view of model theory of some semiabelian varieties $G$,  which translates into ``bad behavior''
of $G^\sharp$,   disappears when one replaces $G^\sharp$  by its socle.

\section{The model theoretic socle;  the abstract case}

\subsection{Preliminaries}

We suppose that $T = T^{eq}$ is a stable theory. We work in a big model of $T$, $\cal U$, saturated  and homogeneous. 

Some  conventions: Let $S\subset \mathcal{U}^n$, we say that $S$ is {\em type-definable} if $S$ is an intersection of  definable sets, or equivalently if $S$ is a partial type, that is, given by a conjonction of formulas  which can be infinite (of cardinality strictly smaller than that of $\mathcal U$).  As usual, we will identify a type-definable $S(x) $ with its set of realizations in the big  model $\cal U$. For $M\preceq \cal U$ a model of $T$, if $S$ is type-definable in $\mathcal{U}^n$, with parameters from $M$, we denote  by $S(M)$ the set of realizations of
$S$ in $M$, $ S(M) := \{m \in M^n : M \models S(m)\} $.

When we say that $S$ is a {\em type-definable   minimal set}, we mean that for any definable set $D$, $S(\U) \cap D(\U) $ is finite or cofinite in $S(\cal U)$. In other words, every relatively definable subset of $S(\U)$ is finite or cofinite. 
When we say that $Q$ is a {\em minimal (complete) type} over a given set of parameters $A$, we mean that $Q$ is minimal and is also a complete type over $A$, hence  is precisely a complete stationary type of $U$-rank one over $A$. 

\smallskip

Further conventions of notation: If $H, G$ are groups, we will use the notation $H<G$ to mean that $H$ is a (non necessarily proper) subgroup of $G$. Likewise, if $K_0, K_1$ are fields,  $K_0 < K_1$ just means that $K_0$ is a subfield of $K_1$.

\noindent We recall first the classic Indecomposability theorem, stated for the
case of a group of finite $U$-rank. This is, in principle, a particularly simple case
($\alpha = 0$) of the  Theorem of
$\alpha$-indecomposables for superstable groups, which generalizes  the
Zilber Indecomposability
Theorem for groups of finite Morley Rank (Theorem 6.10 in \cite{Poizatbook} or
Theorem 3.6.11 in \cite{Wagnerbook}).  However we are typically working with a type-definable  group $G$ rather than a definable group, so
strictly speaking there may not be an ambient superstable theory in which $G$ is type-definable. Nevertheless, the proofs in the references above easily adapt. 

\smallskip

\begin{definition} A  type-definable subset $S$ of $G$ is said to be {\em indecomposable} if for every relatively definable subgroup $H$ of $G$, either $S/H$ is infinite or all elements of $S$ are in the same coset of $H$ (denoted $|S/H|= 1$).\end{definition} 

Let us recall first: 
\begin{lemma}\label{stationaryindecomposable}
  -- If $Q$ is a complete stationary  type in $G$, then $Q$ is indecomposable. Moreover  if $a$ is any realization of the type $Q$ in $\cal U$, then the type-definable
  set $aQ:=\{ x \in {\cal U}: a^{-1}x \models Q\}$
  is also indecomposable.

-- Any type-definable connected subgroup of $G$ is indecomposable.
\end{lemma}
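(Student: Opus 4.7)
For the first statement, the plan is to reduce to the case of a subgroup definable over (the algebraic closure of) $A$. Let $H$ be a relatively definable subgroup of $G$ with parameters in some set $B$, and suppose $|Q/H|<\omega$; the goal is to show $|Q/H|=1$. A priori, cosets of $H$ hit by $Q$ only lie in $\mathrm{acl}^{eq}(A\cup B)$ rather than $\mathrm{acl}^{eq}(A)$, so stationarity cannot be applied directly. I would therefore replace $H$ with $K=\bigcap_{\sigma}\sigma(H)$, the intersection of all $A$-conjugates of $H$. By a Baldwin--Saxl argument in the stable theory $T$, this intersection is already a finite one, say $K=\bigcap_{i=1}^n\sigma_i(H)$, so $K$ is relatively definable in $G$; being $A$-invariant, $K$ is in fact defined over $\mathrm{acl}^{eq}(A)$. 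The coset $aK$ is determined by the tuple $(a\sigma_i(H))_i$ of cosets of the finitely many conjugates used, so $|Q/K|\le|Q/H|^n<\omega$ remains finite.

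Passing to $T^{eq}$, the quotient map $\pi_K\colon G\to G/K$ is defined over $\mathrm{acl}^{eq}(A)$, and $\pi_K(Q)\subseteq G/K$ is a finite, $\mathrm{Aut}(\mathcal{U}/A)$-invariant set, hence contained in $\mathrm{acl}^{eq}(A)$. Thus $aK\in\mathrm{acl}^{eq}(A)$ for every $a\models Q$. Stationarity of $Q$ gives a unique extension of $Q$ to a complete type over $\mathrm{acl}^{eq}(A)$, so every realization of $Q$ has the same type over $\mathrm{acl}^{eq}(A)$, and hence the same image $aK\in G/K$. This yields $|Q/K|=1$, and a fortiori $|Q/H|=1$ since $K\subseteq H$. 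The moreover clause about $aQ$ is then immediate: left translation by $a$ is a bijection on cosets of $H$, so $|aQ/H|=|Q/H|$, and indecomposability transfers from $Q$ to $aQ$.

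For the second statement, if $H'$ is a type-definable connected subgroup of $G$ and $|H'/H|<\omega$ for some relatively definable subgroup $H$, then $H'\cap H$ is a relatively definable subgroup of $H'$ of finite index; connectedness of $H'$ then forces $H'\cap H=H'$, so $H'\subseteq H$ and $|H'/H|=1$.

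The main step requiring care is the Baldwin--Saxl-style reduction to an $\mathrm{acl}^{eq}(A)$-definable subgroup: without it one only obtains a single coset for generic realizations of $Q$ (those independent from $B$ over $A$), whereas the indecomposability definition in the paper demands the conclusion for \emph{all} realizations of $Q$. This intersection argument is where stability of the ambient theory genuinely enters.
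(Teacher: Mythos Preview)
Your argument is correct and follows essentially the same route as the paper: reduce via Baldwin--Saxl to an $A$-invariant (hence $A$-definable) subgroup, then invoke stationarity to force a single coset. The paper phrases this as a contradiction (assuming $|Q/H_b|>1$ and reaching $|Q/H_0|>1$ over $\emptyset$), while you argue directly, and you spell out more carefully why stationarity yields a unique coset via the algebraicity of $\pi_K(Q)$; but the substance is the same.
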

\begin{proof} Suppose $G$ and $Q$ are defined over $\emptyset$. Let $H_b$ be any relatively definable subgroup of $G$, defined over some finite set $b$ such that $|Q/H_b| = n >1$. Let $H_0$ be the intersection of all the $H_{b'}$, with $b' \in \U$, realizing the same type as $b$ over $\emptyset$. Then by stability of $T$ and the  Baldwin-Saxl condition (Prop. 1.4 in \cite{Poizatbook}), $H_0$ is the intersection of a finite number of the $H_b$.  Then $H_0$ is definable over $\emptyset$ and $Q/H_0$ is finite, strictly bigger than $1$. This contradicts the stationarity of $Q$. The rest is obvious. \end{proof}

\begin{proposition} {\em The Indecomposability Theorem}. \label{indecomposability} Let  $G$ be a type-definable  connected group of
finite $U$-rank, $n$. \\
Let $\{S_i :i \in I\} $ be a  family of type-definable indecomposable subsets of $G$, such that each $S_i$ contains  the identity of $G$. Then the subgroup $H$ of $G$ generated by
the $S_i$ is type-definable and connected.  In fact, there is
some $m\leq n$ such that $H = (S_{i_1} \ldots S_{i_m})^2$. 
\end{proposition}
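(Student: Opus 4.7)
My plan follows the classical Zilber / Berline--Lascar indecomposability argument, adapted to the type-definable, finite $U$-rank setting, and proceeds in three stages.

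\emph{Stage 1 (Iterative construction of $P$).} Starting from $P_0 = \{e\}$, I inductively set $P_{k+1} := P_k \cdot S_{i_{k+1}}$ whenever there exists $i_{k+1} \in I$ with $U(P_{k+1}) > U(P_k)$, and halt otherwise. Since each $P_k$ is type-definable and $U$-rank is bounded by $U(G) = n$, this strictly increasing process stops after $m \leq n$ steps at $P := S_{i_1} \cdots S_{i_m}$ of $U$-rank $d \leq n$, satisfying $U(P \cdot S_i) = d$ for every $i \in I$ by the halting criterion.

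\emph{Stage 2 ($P$ is indecomposable).} Because each $S_{i_l}$ contains $e$, every factor $S_{i_j}$ is itself contained in $P$. If $K$ is a relatively definable subgroup of $G$ with $|P/K|$ finite, then each $|S_{i_j}/K| \leq |P/K|$ is finite; indecomposability of $S_{i_j}$ forces $S_{i_j} \subseteq K$, so $P \subseteq K$ and $|P/K| = 1$. (The same argument shows that any product of indecomposable sets each containing $e$ is indecomposable.)

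\emph{Stage 3 (Stabilizer argument and conclusion).} Let $p$ be a stationary generic complete type of $P$ over the parameter set $A$ defining everything (so $U(p) = d$), and set $H := \mathrm{Stab}(p) = \{g \in G : g \cdot p = p\}$. By standard stability theory, $H$ is a type-definable connected subgroup of $G$. The crux is to show $S_i \subseteq H$ for every $i \in I$: taking $s \in S_i$ and $a \models p$ independent over $A$, the equality $U(P \cdot S_i) = d$ and forking calculus force $sa$ to be generic in $P \cdot S_i$ of $U$-rank $d$; a Baldwin--Saxl approximation of the type-definable $H$ by a decreasing family of uniformly relatively definable subgroups (as in the proof of Lemma~\ref{stationaryindecomposable}), combined with indecomposability of $S_i$, then yields $S_i \subseteq H$. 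Consequently the subgroup generated by the $S_i$'s is contained in $H$. Conversely, for any $h \in H$ and any $a \models p$, the identity $h \cdot p = p$ gives $ha \models p$, so both $a$ and $ha$ lie in $P$, whence $h = (ha) \cdot a^{-1} \in P \cdot P^{-1}$ and so $H = P \cdot P^{-1}$. Replacing the family $\{S_i\}$ by $\{S_i\} \cup \{S_i^{-1}\}$ at the outset (which preserves all hypotheses since $S_i^{-1}$ is again indecomposable and contains $e$), the iterative construction can be arranged so that $P \cdot P^{-1} = P \cdot P$, delivering the stated form $H = (S_{i_1} \cdots S_{i_m})^2$ with $m \leq n$.

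The step I expect to be hardest is showing $S_i \subseteq H$: indecomposability is formulated with respect to relatively \emph{definable} subgroups of $G$, whereas $H$ is only \emph{type}-definable, and bridging this gap requires the Baldwin--Saxl compactness argument already invoked in Lemma~\ref{stationaryindecomposable}. A secondary bookkeeping point is the passage from the naturally obtained $H = P \cdot P^{-1}$ to the form $H = P^2$ demanded by the statement, which I handle by closing the family under inverses at the start.
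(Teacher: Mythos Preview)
The paper gives no proof of this proposition: it simply observes that it is the $\alpha=0$ case of the $\alpha$-indecomposability theorem and refers to Poizat and Wagner, remarking that ``the proofs in the references above easily adapt'' to the type-definable finite $U$-rank setting. Your three-stage outline is precisely the classical Zilber/Berline--Lascar argument from those references, so you are doing exactly what the paper defers to.

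Two technical points in your sketch need tightening. First, your greedy halting condition only guarantees $U(P\cdot S_i)=d$ for each single $i$; it does not by itself force $U(P^2)=d$, $U(S_i\cdot P)=d$, or $U(PP^{-1})=d$, all of which are used downstream (the last one is exactly what gives $U(H)=d$ via $ab^{-1}\in H$ for independent $a,b\models p$). The usual fix is to choose $P$ of maximal $U$-rank among \emph{all} finite products of the $S_i$ (and their inverses); one still gets $m\le n$ by a greedy chain. Second, your passage from $H=PP^{-1}$ to $H=P^2$ does not follow merely from closing the family under inverses: even with the enlarged family, $PP^{-1}$ and $P^2$ are different products. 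The correct route is that once $S_i\subseteq H$ is established, $P\subseteq H$, so the stationary type $p$ is the unique generic of the connected group $H$; then for $h\in H$ and $a\models p$ independent from $h$, both $a$ and $ha^{-1}$ are generic in $H$, hence realize $p$, hence lie in $P$, giving $h=(ha^{-1})\cdot a\in P\cdot P$.

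On the step you flag as hardest, $S_i\subseteq H$: you are right that this is where the finite $U$-rank argument genuinely differs from the finite Morley rank one (where one simply counts the finitely many generic types of $S_iP$). Your appeal to a Baldwin--Saxl approximation of $H$ by relatively definable subgroups is the right move, but as written it is only a gesture --- one still has to explain why $|S_i/K|$ is finite for each such approximating $K$, and this does not fall out of the rank equality $U(P\cdot S_i)=d$ alone. The references you would be adapting (Wagner, Theorem 3.6.11, or the Berline--Lascar argument) do fill this in; it would be worth making that step explicit rather than leaving it at the level of a citation-by-analogy.
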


\begin{corollary} \label{connected} 1. If $(B_i)_{i \in I} $  are type-definable connected
  subgroups of $G$, then the group generated by the $B_i$ is type-definable and connected. Furthermore it is the product of finitely
  many of the $B_i$. \\
\end{corollary}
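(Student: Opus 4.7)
The plan is to apply the Indecomposability Theorem (Proposition \ref{indecomposability}) to the family $\{B_i : i \in I\}$ directly. Each $B_i$, being a subgroup, contains the identity, so the identity hypothesis of the Indecomposability Theorem is automatic. Moreover, by the second part of Lemma \ref{stationaryindecomposable}, every type-definable connected subgroup of $G$ is indecomposable, so each $B_i$ is an indecomposable type-definable subset of $G$.

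Applying Proposition \ref{indecomposability} to this family, I obtain that the subgroup $H$ generated by the $B_i$ is type-definable and connected, and moreover there exist $i_1, \ldots, i_m \in I$ with $m \leq n = U(G)$ such that
\[
H = (B_{i_1} B_{i_2} \cdots B_{i_m})^2 = B_{i_1} B_{i_2} \cdots B_{i_m} B_{i_1} B_{i_2} \cdots B_{i_m}.
\]
This already exhibits $H$ as a product of finitely many (at most $2m$) of the $B_i$'s, giving the ``Furthermore'' part of the statement.

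There is no real obstacle here; the corollary is essentially a direct specialization of Proposition \ref{indecomposability} once one knows that type-definable connected subgroups are indecomposable. The only minor point to observe is that the square in $(B_{i_1} \cdots B_{i_m})^2$ need not collapse (one cannot in general simplify $B \cdot B$ to $B$ when the $B_i$ do not pairwise commute as sets), but this is irrelevant for the conclusion since the statement only asserts that $H$ is a finite product of the $B_i$, with repetitions permitted.
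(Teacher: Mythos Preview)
Your proof is correct and is exactly the intended derivation: the paper states this corollary immediately after Proposition~\ref{indecomposability} without giving a separate proof, since it follows at once from that proposition together with the second part of Lemma~\ref{stationaryindecomposable}. Your observation about the square not collapsing in the non-commutative case is accurate and harmless, since the conclusion only requires a finite product with repetitions allowed.
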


\medskip 
From now on, we will suppose that the group $G$ is commutative, to simplify notation. 

\begin{definition} {\rm 1.} Let $G$ be a type-definable   commutative connected group of finite
  $U$-rank.
We say that $G$ is {\em almost generated} by type-definable sets $Q_1,\ldots, Q_n$
if $G \subset acl ( F \cup  Q_1\cup \ldots \cup Q_n)$, where $F$ is some finite 
set of parameters,  by which we mean that  
$\cal U$, $G({\cal U}) \subset acl ( F \cup Q_1({\cal  U} ) \cup \ldots \cup
Q_n({\cal U} )) $. 

When $G$ is definable and almost generated  by a single strongly minimal set, then $G$ is usually called {\em almost strongly minimal}. If $G$ is almost generated  by a single minimal type-definable set  $Q$, we say that $G$ is {\em almost minimal} with respect to $Q$. 
  In order to be able, further down,  to treat uniformly all characteristics, we will make the convention that we will say that $G$ is almost minimal with respect to $Q$, whether $Q$ is a strongly minimal definable set or a minimal type-definable set. 

If $G$ is almost generated  by minimal type-definable   $Q_{1},..,Q_{n}$ we say that $G$ is {\em almost pluri-minimal}. \\

\noindent {\rm 2.} Let $G$ be a type-definable  connected group,
defined over some $B =acl(B)$, $G$   is said to be {\em rigid} if,
passing to a saturated model, all connected type-definable subgroups of $G$ are defined over $acl(B)$. 
\end{definition}


\medskip

The following is well-known, but follows from the more general Lemma 4.7 from Chapter 7 of  \cite{Pillaybook}.

\begin{proposition} \label{nonorthogonal} Let $G$ be a type-definable  connected group,  almost minimal with respect to a minimal type $Q$.  If $Q$ is not  orthogonal to the type-definable set $D$, then there is a type-definable group $H \subset dcl (D) $ and a definable surjective homomorphism $h : G \to H$ with finite kernel.
\end{proposition}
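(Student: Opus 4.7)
The plan is to reduce to Lemma 4.7 of Chapter 7 of \cite{Pillaybook}, which, for a type-definable connected group of finite $U$-rank, yields exactly the desired conclusion from the a priori weaker hypothesis that the generic type of $G$ is almost $D$-internal. The whole task is therefore to verify that our hypotheses---$G$ almost minimal with respect to $Q$ and $Q \not\perp D$---imply almost $D$-internality of the generic of $G$.

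Fix $B_0 = acl(B_0)$ containing $F$ together with all parameters used to define $G$, $Q$, and $D$. Non-orthogonality of $Q$ to $D$ provides an extension $B \supseteq B_0$, a realization $a \models Q|B$, and a tuple $d$ from $D$ with $a \df{B} d$. Since $Q|B$ has $U$-rank $1$, forking forces $U(a/Bd) = 0$, i.e.\ $a \in acl(B,d)$; by homogeneity of $\U$, every realization of $Q|B$ is algebraic over $B$ together with some tuple from $D$.

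Now let $g$ be generic in $G$ over $B$. Almost minimality gives $g \in acl(F, a_1,\dots,a_k)$ for some finite tuple of realizations $a_i$ of $Q$. Extending $B$ if necessary and replacing the $a_i$ by a suitable independent set of realizations of non-forking extensions of $Q$ over $B$, the preceding step supplies tuples $d_i$ from $D$ with $a_i \in acl(B, d_i)$ for each $i$. Composing the algebraic closures yields $g \in acl(B, D)$, which is exactly the almost $D$-internality we wanted. Lemma 4.7 of \cite{Pillaybook}, whose proof adapts verbatim to the type-definable setting, now supplies the type-definable group $H \subseteq dcl(D)$ and the surjective definable homomorphism $h : G \to H$ with finite kernel.

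The delicate point is the bookkeeping in the last paragraph: when decomposing a single generic $g$, one has to arrange the $a_i$ to be realizations of non-forking extensions of $Q$ over a common base, so that each can simultaneously be pushed into $acl(B,D)$. This is routine in the finite $U$-rank stable context and hinges on minimality of $Q$, which ensures that non-orthogonality to $D$ yields \emph{genuine algebraicity} of realizations of $Q$ over $D$-tuples rather than merely non-forking dependence. The real substance---producing the group $H$ and the homomorphism from almost $D$-internality of the generic---is what Lemma 4.7 does for us, using connectedness of $G$ and stable embeddedness of $D$ in an essential way.
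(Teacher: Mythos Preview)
Your approach is essentially the same as the paper's: the paper simply states that the proposition ``follows from the more general Lemma 4.7 from Chapter 7 of \cite{Pillaybook}'' without further argument, and you invoke the same lemma. You additionally spell out the reduction---showing that almost minimality with respect to $Q$ together with $Q \not\perp D$ yields almost $D$-internality of the generic of $G$---which the paper leaves implicit; this extra detail is correct and welcome.
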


\subsection{The Socle}

One can  find definitions and  proofs for the existence of the socle
and the ``weak socle theorem'' in the case of groups of finite Morley
Rank, for example in \cite{Hrushovski} or \cite{bouscaren}. But there is no  reference, as far as we know, 
for   the case of type-definable groups of finite $U$-rank, although it is a result which has been used before.  So we take this
opportunity to give a complete presentation here.  

Let again $T= T^{eq} $  be stable and $G$ be a type-definable commutative  group,   connected and of finite
$U$-rank, $n$. 
We work in a big model $\cal U$, over some algebraically closed set $A$
over which $G$  is defined.

{\em Remark}: We know that in the group $G$, because of finite $U$-rank,   there are only a finite number of orthogonality classes of minimal
   types, hence we could work over some model $M_0$ over which   they are
   all represented;  on the other hand,  the arguments  below reprove that  there are only finitely many such classes. \\

\begin{lemma} \label{maximalQminimal} Let $Q$ be a minimal type-definable set,  contained in $G$.  Then there is a
 unique  maximal  connected type-definable subgroup $B_{Q}$ of $G$ which is almost minimal with respect to $Q$. 
Furthermore, if $Q'$ is a minimal type extending $Q$ then
 $Q'/B_Q$ is trivial.
\end{lemma}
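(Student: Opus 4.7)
The plan is to take $B_Q$ to be the subgroup generated by all connected type-definable subgroups of $G$ that are almost minimal with respect to $Q$, and show this collection has a maximum element. Let $\mathcal{B}$ denote the family of connected type-definable subgroups $B<G$ with $B\subset acl(F\cup Q)$ for some finite $F$. Note $\{0\}\in\mathcal{B}$, so the family is nonempty. The first key observation is that $\mathcal{B}$ is closed under (finite) sums: if $B_1,B_2\in\mathcal{B}$ with $B_i\subset acl(F_i\cup Q)$, then by Corollary \ref{connected} the group $B_1+B_2$ is type-definable and connected, and clearly $B_1+B_2\subset acl(F_1\cup F_2\cup Q)$.

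Next, I would use finite $U$-rank to extract a maximum. Pick $B_Q\in\mathcal{B}$ of maximal $U$-rank (bounded above by $n=U(G)$). For any $B'\in\mathcal{B}$, the sum $B_Q+B'$ lies in $\mathcal{B}$ and contains $B_Q$; since $B_Q+B'$ is connected of finite $U$-rank, any proper type-definable subgroup would have strictly smaller $U$-rank, so maximality forces $B_Q+B'=B_Q$, i.e.\ $B'\subset B_Q$. This gives both existence of a maximum and, automatically, uniqueness.

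For the ``furthermore'' part, let $Q'$ be a minimal complete type extending $Q$, and fix a realization $a\models Q'$. Since $Q'$ is a stationary complete minimal type, Lemma \ref{stationaryindecomposable} says the translate $Q'-a$ is an indecomposable type-definable set containing $0$. Applying the Indecomposability Theorem (Proposition \ref{indecomposability}) to the single indecomposable set $Q'-a$, the subgroup $H$ it generates is type-definable, connected, and is in fact a finite sum of copies of $Q'-a$. Thus every element of $H$ lies in $dcl(\{a\}\cup\text{finitely many realizations of }Q')\subset acl(\{a\}\cup Q(\U))$, so $H$ is almost minimal with respect to $Q$ and hence $H\subset B_Q$ by maximality. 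In particular $Q'-a\subset B_Q$, so $Q'\subset a+B_Q$, proving $Q'/B_Q$ is trivial.

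The main subtlety I expect is ensuring the type-definable, possibly non-superstable-ambient, versions of the ingredients go through: namely the ``$U$-rank strict inequality for proper connected subgroups'' used in step two, and the applicability of the Indecomposability Theorem to $Q'-a$ where $Q'$ is only a complete minimal type (not assumed definable). Both are standard adaptations of the classical statements, already flagged in the text as routine, but they are the technical points that need verification; the rest of the argument is essentially formal manipulation of the definitions.
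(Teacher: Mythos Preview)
Your proof is correct and follows essentially the same approach as the paper: both arguments pick $B_Q$ of maximal $U$-rank in the class of connected type-definable subgroups almost minimal with respect to $Q$, use closure under sums plus connectedness to show maximality (hence uniqueness), and then apply indecomposability of $Q'-a$ together with the Indecomposability Theorem to produce a connected almost-minimal subgroup containing $Q'-a$, forcing $Q'$ into a single coset of $B_Q$. The only cosmetic difference is the order of presentation (the paper constructs the group generated by $Q'-a$ first and then does the maximality argument), and your direct observation that $Q'-a\subset H\subset B_Q$ is slightly cleaner than the paper's detour through $Q'\subset H\cup(a+H)$ and indecomposability of $Q'$.
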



\begin{proof}  Let  $Q'$ be the unique  {\em minimal type} extending $Q$ over the same set of parameters as $Q$ (i.e. complete stationary type of $U$-rank $1$, extending $Q$). 

First let $a\in {\cal U} $ realize the type $Q'$, and let $Q_{1}' =  Q'- \{a\} := \{ x \ ;\  x = b-a ,  b\models  Q'\}$, $Q_{1}'$ is an indecomposable type-definable set
   containing
  zero (\ref{stationaryindecomposable}). So by the  indecomposability theorem (\ref{indecomposability}), the group generated by
  $Q_{1}'$, $H(Q_{1}')$,  is type-definable and connected. Clearly 
$Q' \subset H(Q_{1}' )\cup (a+ H(Q_{1}'))$, so  $Q'/H(Q_{1}')$ is finite and by indecomposibility of $Q'$, of cardinality one. 

Now consider the class of all connected type-definable subgroups of $G$, which are almost minimal with respect to $Q$,  denoted by
$S_Q$. By finite $U$-rank of $G$, one can choose such a group, $B_Q$, of
maximal $U$-rank.  Then, for any other  connected type-definable group, $H$
in $S_Q$, $B_Q + H$ is connected and type definable (by the Indecomposability Theorem), and is
also almost minimal with respect to  $Q$. As it contains $B_Q$ it has
bigger or equal rank. By maximality of the rank of $B_Q$, it has same
rank, so by connectedness it is equal to $B_Q$ and it follows that any
such $H$ is contained in $B_Q$. So $B_{Q}$ is the unique type-definable subgroup of $G$ which is maximal connected almost minimal with respect to $Q$. In  particular the group $H(Q_{1}')$ from
above is contained in $B_{Q}$  hence $Q'/B_Q$ has cardinality one. 
\end{proof}

\begin{proposition} \label{existencesocle} There is a type-definable  connected subgroup of $G$,
  $S(G)$,  which is the unique maximal   almost pluri-minimal connected subgroup  of $G$.\\
  Furthermore, $S(G) = B_{Q_1} + \ldots + B_{Q_m}$, for some $m$, where each $Q_i$ is a minimal type-definable set,  and:

  -- $B_{Q_i}$ is the maximal type-definable connected subgroup of $G$ which is almost minimal with respect to $Q_i$

  -- $Q_j \perp Q_i$ if $i \not= j$, so in particular for $i \not= j$, the
  intersection of $B_{Q_i}$ and $B_{Q_J}$ is  finite
  
-- for any other minimal type-definable  $Q$ in
$G$, $Q$ is non orthogonal to one of the $Q_i$.

We call $S(G)$ the (model-theoretic) {\em socle} of $G$.
\end{proposition}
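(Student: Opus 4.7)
The plan is to construct $S(G)$ explicitly as the sum of the groups $B_{Q_i}$ supplied by Lemma \ref{maximalQminimal}, one for each orthogonality class of minimal type-definable set realized in $G$, and then verify maximality by absorbing any putative larger almost pluri-minimal subgroup into $S(G)$.

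First I would argue that only finitely many orthogonality classes of minimal type-definable sets are realized in $G$. Once the pairwise intersection $B_{Q_i} \cap B_{Q_j}$ is known to be finite for distinct classes, finite $U$-rank of $G$ bounds the number of classes, since each $B_Q$ has positive $U$-rank and $U$-ranks add over pairs with finite intersection. I would therefore prove finite intersection first: if the connected component of $B_{Q_i} \cap B_{Q_j}$ had positive $U$-rank, it would contain (after parameters) a minimal type $R$; since $R$ is realized inside $B_{Q_i} \subset acl(F_i \cup Q_i(\U))$, realizations of $R$ are algebraic over realizations of $Q_i$ and so $R \not\perp Q_i$, and likewise $R \not\perp Q_j$; transitivity of nonorthogonality on minimal types then contradicts $Q_i \perp Q_j$. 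I then pick representatives $Q_1, \ldots, Q_m$, set $S(G) := B_{Q_1} + \cdots + B_{Q_m}$, and invoke Corollary \ref{connected} to conclude that $S(G)$ is connected and type-definable; it is almost pluri-minimal by construction.

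For maximality, let $H$ be any connected type-definable almost pluri-minimal subgroup of $G$, almost generated by minimal types $R_1, \ldots, R_k$. Each $R_\ell$ is nonorthogonal to some $Q_{i(\ell)}$ by our exhaustive choice of representatives. Nonorthogonality of minimal types yields almost internality in both directions (after parameters), hence $acl(F \cup R_\ell(\U)) \subseteq acl(F' \cup Q_{i(\ell)}(\U))$ for a suitable finite $F'$; combining over $\ell$ gives $H \subset acl(F'' \cup S(G)(\U))$. The connected type-definable quotient $(H + S(G))/S(G)$ is then contained in the algebraic closure of the finite set $F''$, so has $U$-rank $0$, and is therefore trivial. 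Hence $H \subset S(G)$. The final clause, that any minimal type-definable $Q$ in $G$ is nonorthogonal to some $Q_i$, is built into the choice of representatives.

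The main technical obstacle I foresee is the passage from nonorthogonality of minimal types to the explicit inclusion $acl(F \cup R(\U)) \subseteq acl(F' \cup Q(\U))$. This is a standard stability-theoretic fact — nonorthogonality of minimal regular types yields interalgebraicity of generics after adding parameters, and one propagates this to the whole almost-$R$-minimal set via the indecomposability theorem — but it requires care because $Q$ and $R$ need not share a base, the generating types $R_\ell$ need not be realized inside $G$, and one is working throughout with type-definable rather than definable groups.
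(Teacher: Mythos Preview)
Your overall strategy differs from the paper's: you build $S(G)$ bottom-up as $\sum_i B_{Q_i}$ and then try to absorb an arbitrary almost pluri-minimal $H$ into it; the paper instead defines $S(G)$ top-down (as the almost pluri-minimal connected subgroup of maximal $U$-rank), separately takes $H$ to be the sum of $B_Q$'s of maximal $U$-rank, and proves $H = S(G)$ by a generic-element argument.

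The real gap in your proposal is the line ``Each $R_\ell$ is nonorthogonal to some $Q_{i(\ell)}$ by our exhaustive choice of representatives.'' Your representatives $Q_1,\ldots,Q_m$ exhaust the orthogonality classes of minimal types \emph{realized in $G$}, but the minimal types $R_\ell$ almost generating $H$ are, as you yourself note at the end, allowed to live outside $G$. Nothing you have written rules out an $R_\ell$ orthogonal to every $Q_i$; to proceed one would have to argue separately (via domination or weight in finite $U$-rank) that such an $R_\ell$ is orthogonal to every type realized in $G$ and can therefore be dropped from the generating set for $H$. That is not the ``passage from nonorthogonality to $acl$-inclusion'' you flag as the obstacle---that step really is routine once nonorthogonality is in hand---it is the prior step of establishing nonorthogonality at all. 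You have located the difficulty in the wrong place.

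The paper sidesteps this entirely. Assuming for contradiction that $S(G)/H$ is infinite, it takes $c$ generic in $S(G)$ over a model $M_0$, writes $c \in acl(M_0,b_1,\ldots,b_k)$ with the $b_j$ realizing (possibly external) minimal types and $k$ minimal, and by a short exchange argument on the coset $\hat c = c + H$ isolates some $b_j$ such that $c$ has $U$-rank $1$ over $M_0 B_j$ (where $B_j = \{b_1,\ldots,b_k\}\setminus\{b_j\}$). The point is that $Q := tp(c/acl(M_0 B_j))$ is now a minimal type \emph{of an element of $G$}, so Lemma~\ref{maximalQminimal} applies directly: $|Q/B_Q|=1$, and $B_Q \subset H$ by maximality of $H$, forcing $\hat c \in acl(M_0 B_j)$ and hence $c \in acl(M_0 B_j)$, contradicting minimality of $k$. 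No comparison between external $R_\ell$'s and internal $Q_i$'s is ever made.

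A secondary point: your quotient $(H+S(G))/S(G)$ is by a type-definable (not relatively definable) subgroup, so lives among hyperimaginaries rather than in $T^{eq}$; this is manageable in a stable theory but deserves a sentence.
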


\begin{proof} Consider the class  $S$ of all the connected type-definable subgroups of $G$, $H$, which are  such that, for some finite $F$,  and some $k$, 
$H \subset acl   (F\cup Q_1 \cup \ldots \cup Q_k)$, for $F $ finite and  $Q_i$ minimal
type-definable in $G$. 

The exact same arguments as in Lemma \ref{maximalQminimal}  show that if one
chooses a group  $B$ in
$S$ having maximal U-rank, it will, as above,  be maximal and unique. We denote it $S(G)$. 

\medskip

 Let $R$ be the class of all subgroups (connected and 
  type-definable by the Indecomposability Theorem (\ref{indecomposability}))  of the 
form $H = B_{Q_1} + \ldots  + B_{Q_n}$, where $B_Q$ denotes  the unique maximal subgroup of $G$ almost  minimal with respect to $Q$, given by the previous lemma.  

Choose a group  $H$  in $R$ with maximal U-rank.  As above, $H$
is maximal and unique.

\begin{claim}  $H = S(G)$. \end{claim}

\begin{prclaim}  Of course, by maximality of $S(G)$,  $H \subset S(G)$. 
Suppose for contradiction that  $H$ and $S(G)$ are  not
  equal, then by connectedness of $S(G)$, $S(G)/H$ is infinite of unbounded cardinality. 
Let $M_0$ be a model such that $G, S(G)$ and $B_{Q_1},\ldots, B_{Q_n}$ (and hence also $H$) are defined over $M_0$. As $S(G)/H$ is infinite, if $c$ realizes the generic type of $S(G) $ over $M_0$, then   the coset $ \hat c:= c+H$  is not algebraic over $M_0$. 

By definition of $S(G)$, there are a finite set $F$ (which we can suppose included in $M_0$), and $b_1,\ldots,b_k$, each realizing a minimal type over $M_0$, such that $c \in acl(F, b_1,\ldots ,b_k)$, so $c\in acl(M_0, b_1,\ldots, b_k)$. Choose  $k$
minimal. Let $B = \{b_1, \ldots , b_k\}$ and $B_j := B - \{b_j\}$. By minimality of $k$, for any $j$ , $c\notin acl(M_0 B_j)$.  Now $\hat c$ is algebraic over $M_0 c$, hence over $M_0 B$. Let $B'\subset B$  be maximal  such that $\hat c$ is not algebraic over $M_0 B'$ but is algebraic over $M_0 B' b_j$, for some $b_j \in B\setminus  B'$ (this exists because $\hat c$ is not algebraic over $M_0$).  So  $\hat c$ and $b_j$ are equialgebraic over $M_0 B'$. It follows that $c$ is algebraic over $ M_0 B_j {\hat c}$.  Now  $c$ has the same  $U$-rank as  $b_j$ over $M_0B_j $, that is, it has $U$-rank one. Let $Q$ denote $tp (c/ acl (M_0 B_j))$. Then we have seen earlier that $|Q/B_Q| =1$ and, as by maximality of $H$, $B_Q \subset H$, it follows that $|Q/H| = 1 $ also.  Hence $\hat c$ is in fact algebraic  over $M_0B_j$, and so $c \in acl( M_0 B_j)$, which contradicts the minimality of $k$. 
 \end{prclaim}

\begin{claim} $S(G) = B_{Q_1} + \ldots  + B_{Q_n}$, where the
  minimal  $Q_i$ are pairwise orthogonal  and any other minimal   $Q$ is non
  orthogonal to one of the $Q_i$'s. \end{claim}

\begin{prclaim} If $Q_1$ and $Q_2$ are non orthogonal, then $B_{Q_1} =
B_{Q_2}$:   If they are non orthogonal, they might still be weakly
orthogonal, but,  over some finite set $F$,  $Q_1$
and $Q_2$ are equialgebraic so each $B_{Q_i}$ is almost minimal with respect to the  other.

If $Q_1$ and $Q_2$ are orthogonal, then $B_{Q_1} \cap B_{Q_2}$ is
 contained in the algebraic closure  of a finite set, hence, as $B_{Q_1} \cap B_{Q_2}$ is type-definable, by compactness,  it is finite. 

Suppose some $Q$ were  orthogonal to all the $Q_i$'s. Then $B_Q$  could not be included in $S(G)$, contradicting the maximality. \end{prclaim}

\noindent This finishes the proof of the proposition. 
\end{proof}

Note that this reproves that in $G$ there are only finitely many non
orthogonality classes of minimal types.

If $G$ is a definable group with finite Morley rank, then we can work with definable minimal sets, namely strongly minimal sets, and the groups $B_{Q_i}$ are almost strongly minimal (see for example \cite{Hrushovski} or \cite{Poizatbook}). Recall also that in the finite Morley rank case, every type-definable subgroup of $G$ is in fact definable.\\

So in that case, Proposition \ref{existencesocle} gives the  following:

\begin{proposition}\label{omegastablecase}{\em The finite Morley Rank case: } 
Let $G$ be a definable connected commutative group of finite Morley rank. Then there is a definable connected subgroup of $G$, $S(G)$ which is the unique maximal connected almost pluri-minimal subgroup of $G$. \\
Furthermore, $S(G) = B_{Q_1} + \ldots + B_{Q_m}$, for some $m$, where each $Q_i$ is a strongly minimal set  and:

  -- $B_{Q_j}$ is the maximal definable connected subgroup of $G$ which is almost minimal with respect to $Q_i$

  -- $Q_j \perp Q_i$ if $i \not= j$, so in particular for $i \not= j$, the
intersection of $B_{Q_i}$ and $B_{Q_J}$ is almost finite (contained  in
the  algebraic closure of a finite set)

-- for any other strongly minimal set $Q$ in
$G$, $Q$ is non orthogonal to one of the $Q_i$.
\end{proposition}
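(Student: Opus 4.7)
The plan is to derive this as a direct corollary of Proposition~\ref{existencesocle}, using two standard facts about the finite Morley rank context: every type-definable subgroup of a group of finite Morley rank is actually definable, and minimal complete (stationary) types over a set of parameters in this context correspond exactly to generic types of strongly minimal definable sets.

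First I would apply Proposition~\ref{existencesocle} to $G$ (since finite Morley rank implies finite $U$-rank), obtaining a type-definable connected subgroup $S(G) = B_{Q_1} + \ldots + B_{Q_m}$ with the $Q_i$ pairwise orthogonal minimal type-definable sets, each $B_{Q_i}$ maximal connected type-definable almost minimal with respect to $Q_i$, and every minimal type-definable subset of $G$ non-orthogonal to some $Q_i$. Then I would invoke the fact that in finite Morley rank, every type-definable subgroup is definable, to promote $S(G)$ and each $B_{Q_i}$ to definable subgroups.

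Next I would replace each minimal type-definable $Q_i$ by a strongly minimal definable set $Q_i^{*}$. By the discussion before the statement (and the convention in Definition~1.7), in the finite Morley rank setting one may take the $Q_i$ to be strongly minimal already: working over a set of parameters containing the canonical parameters of $B_{Q_i}$, a minimal complete type of $U$-rank one is the generic type of a strongly minimal definable set $Q_i^{*}$, and replacing $Q_i$ by $Q_i^{*}$ leaves the class ``almost minimal with respect to $Q_i$'' unchanged (since $Q_i$ and $Q_i^{*}$ are interalgebraic). The maximality, uniqueness and orthogonality properties transfer verbatim, since orthogonality of strongly minimal sets is the same as orthogonality of their generic types, and non-orthogonality of a strongly minimal set $Q$ to some $Q_i^{*}$ is equivalent to non-orthogonality of the corresponding generic types.

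The only point requiring a small check is the intersection clause: for $i\neq j$, Proposition~\ref{existencesocle} gives that $B_{Q_i}\cap B_{Q_j}$ is finite (hence in particular contained in the algebraic closure of a finite set), which is exactly the ``almost finite'' condition stated. There is no serious obstacle here; the entire content of Proposition~\ref{omegastablecase} is a translation of Proposition~\ref{existencesocle} using the automatic definability of type-definable subgroups in finite Morley rank and the identification of minimal complete types with generic types of strongly minimal sets.
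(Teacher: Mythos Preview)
Your proposal is correct and matches the paper's own argument: the paper derives Proposition~\ref{omegastablecase} directly from Proposition~\ref{existencesocle} by observing that in the finite Morley rank context every type-definable subgroup is definable and one can work with strongly minimal sets in place of minimal types. Your more detailed justification (passing from minimal types to their strongly minimal envelopes, and noting that the intersection clause is even stronger than stated) fills in exactly what the paper leaves as a remark.
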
 

In this section there has been a blanket assumption that the ambient theory $T$ is stable.  But in fact in Proposition 2.11 for example the assumption is not required, noting that finite Morley rank of the definable group $G$ is in the sense of the ambient theory, and implies stable embeddability of $G$.

Finally some remarks about sets of definition for $S(G)$: In the finite Morley rank case,  the finite number of non orthogonality classes of strongly minimal sets in $G$ are represented over any model over which $G$ is defined. In the  finite $U$-rank case, as mentioned above, in order to have representatives for all the finitely many non orthogonality classes of complete minimal types, one might need to work over a slightly saturated model. But, in both cases,  if $G$ is defined over some  set $A$, then by uniqueness and maximality,  each of the type-definable groups $B_{Q_i}$ and the type-definable group $S(G)$ itself will be left invariant by any $A$-automorphism. It follows that they are all in fact defined also over $A$. \\

\subsection{The Weak Socle Theorem}

Here we will state and prove what we call the ``weak socle theorem" for type-definable groups of finite $U$-rank. This was originally proved by Hrushovski in
\cite{Hrushovski}, although,  in the paper, he only  gave the proof for the
finite Morley rank case. There are a few steps where one has to be a 
little more careful when dealing with type-definable groups and
finite $U$-rank.  The reason we call it the weak socle theorem is as in \cite{BBP2}, namely that a stronger statement which is sometimes referred to as the socle theorem (Theorem 2.1 of \cite{Pillay-ML}) was proved in the context of algebraic $D$-groups

We work in  a saturated model $\cal U$ of a stable theory $T= T^{eq}$,
$G$ will be a connected type-definable group of finite $U$-rank.  We
will assume $G$ to be commutative, and will use additive notation, but the theorem works in general.   Let $S(G)$ be the
model-theoretic socle of $G$ as we have defined it in the previous section.  Assume $G$, so also $S(G)$,  are type-defined over $\emptyset $.
We will be making use of the stability-theoretic fact that if $D$ is a type-definable set (over parameters $A$ say) and $a$ is a finite tuple, then there is a small subtuple $c$ from $D$ such that $tp(a/A,c)$ implies $tp(a/D,A)$.  It can be proved by using the fact that $tp(a/D,A)$ is definable (Proposition 2.19 of \cite{Pillay-stability}).


\begin{theorem}\label{weaksocle} Let $p(x)$ be a complete stationary type (over
  $\emptyset$ say) of an element of $G$. Assume $Stab_{G}(p)$ is
  finite. Assume also that every connected type-definable subgroup of
  $S(G)$ is type-defined over $acl(\emptyset)$ (ie $S(G)$ is rigid). Then there is a coset (translate) $C$ of $S(G)$ in $G$ such that all realizations of $p$ are contained in $C$.
\end{theorem}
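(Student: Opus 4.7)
The target is to show that all realizations of $p$ lie in a single coset of $S(G)$, equivalently that for any two realizations $a, a'$ of $p$ we have $a - a' \in S(G)$. My first move is to reduce to controlling a single connected type-definable subgroup associated to $p$: fix $a_0 \models p$ and consider $p - a_0 := \{x - a_0 : x \models p\}$. By Lemma~\ref{stationaryindecomposable}, this is an indecomposable type-definable subset of $G$ containing $0$, and by the Indecomposability Theorem (Proposition~\ref{indecomposability}), the subgroup $H$ it generates is type-definable and connected. Since $p - a_0 \subseteq H$, all of $p$ lies in the coset $a_0 + H$; a routine check shows $H$ does not depend on the choice of $a_0$. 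It therefore suffices to show $H \subseteq S(G)$.

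Suppose for contradiction $H \not\subseteq S(G)$. Then $H + S(G)$ is a connected type-definable subgroup (Corollary~\ref{connected}) strictly larger than $S(G)$, so by the maximality in Proposition~\ref{existencesocle}, $H + S(G)$ is not almost pluri-minimal; accordingly, working over a slightly saturated model $M_0$ containing parameters for $G$, $S(G)$, $H$ and representatives of all non-orthogonality classes of minimal types in $G$, a finite-$U$-rank analysis of the generic of $H$ over $M_0$ must expose a minimal subquotient type $Q$ that is orthogonal to every minimal type realized in $S(G)$. The plan is to leverage this $Q$, together with Proposition~\ref{nonorthogonal} and the finite-stabilizer hypothesis on $p$, to derive a contradiction.

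Here is where rigidity of $S(G)$ and finiteness of $\mathrm{Stab}_G(p)$ enter together. Rigidity ensures that $(H\cap S(G))^0$ is $acl(\emptyset)$-definable, so one may form the projection $\pi\colon G \to \bar G := G/(H \cap S(G))^0$ without adding parameters; the pushforward $\bar p$ inherits finite stabilizer, and $\bar H := \pi(H)$ meets $\pi(S(G))$ in a finite set. The orthogonal minimal factor $Q$ survives in $\bar H$, and Proposition~\ref{nonorthogonal} converts non-orthogonality to $Q$ into a finite-kernel homomorphism from a subgroup of $\bar H$ onto a type-definable minimal group $\bar B$ living in the algebraic closure of $Q$. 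Automorphisms of $\mathcal U$ that fix the ``$S(G)$-side'' of $\bar G$ but act non-trivially on $\bar B$ will then produce a positive-dimensional group of translations of $\bar G$ preserving $\bar p$, contradicting the finite stabilizer hypothesis. The main obstacle is making this last step fully rigorous: one has to orchestrate the minimal-type analysis, the conversion of non-orthogonality into a group homomorphism, and the rigidity-based identification of stabilizer elements carefully enough that the ``extra'' symmetries really descend to genuine elements of $\mathrm{Stab}_G(\bar p)$ and can be lifted back to $G$. I expect the cleanest implementation to be by induction on $U(p)$: the case $U(p) = 0$ is trivial, and in the inductive step one passes to the quotient by a minimal subquotient which is either absorbed into $S(G)$ via Proposition~\ref{nonorthogonal} or eliminated by the finite-stabilizer hypothesis together with rigidity, thereby reducing the rank.
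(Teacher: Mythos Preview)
Your opening reduction is clean and correct: setting $H$ to be the connected type-definable group generated by $p-a_0$, the statement is equivalent to $H\subseteq S(G)$. From that point on, however, the argument is a sketch with a real gap, and one of the steps you do spell out is wrong as stated.

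The specific error is the claim that, if $H\not\subseteq S(G)$, then the analysis of the generic of $H$ ``must expose a minimal subquotient type $Q$ that is orthogonal to every minimal type realized in $S(G)$''. By Proposition~\ref{existencesocle}, \emph{every} minimal type in $G$ is non-orthogonal to one of the $Q_i$ occurring in $S(G)$; there is no minimal type in $G$ orthogonal to $S(G)$. What can happen is that a minimal type in some proper quotient of $G$ fails to come from $S(G)$, but then one has to argue in that quotient, and your proposed use of Proposition~\ref{nonorthogonal} no longer applies as written. The larger gap is the mechanism by which you pass from ``extra minimal factor'' to ``infinite subgroup of $\mathrm{Stab}(p)$''. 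The line about automorphisms of $\mathcal U$ fixing the $S(G)$-side and moving $\bar B$ producing translations that stabilize $\bar p$ is not an argument: automorphisms do not, in general, act by translations on $G$, and there is no reason an automorphism fixing $S(G)$ setwise should send a realization of $p$ to another realization differing by an element of $G$. You acknowledge this is the ``main obstacle'', and indeed nothing you write resolves it; the suggested induction on $U(p)$ is not set up either.

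The paper's proof is organized quite differently and supplies exactly the missing mechanism. One replaces $S(G)$ by a relatively definable $L$ with $S(G)=L^0$, projects $\pi:G\to G/L$, and studies the fibre $G_b=\pi^{-1}(b)$ for $b=\pi(a)$. The key (Claim~2 there) is a Galois-type statement: the complete types over $L\cup\{b\}$ realized in $G_b$ are exactly the orbits of a single type-definable subgroup $H\le L$ acting by translation. Its connected component $H^0$ is then shown to lie in $\mathrm{Stab}(p)$ (Claim~3), so $H^0=0$ by hypothesis, whence $a\in\mathrm{acl}(b,c)$ for some small $c\subset L$. If $tp(b)$ were non-algebraic, one then manufactures a minimal type in $G$ whose image in $G/L$ is infinite, contradicting that the group it generates must lie in $S(G)\subseteq L$. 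This fibrewise orbit description is the concrete device that turns rigidity plus finite stabilizer into the conclusion; your proposal does not contain an analogue of it.
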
 
\begin{proof}  Without loss work over $acl(\emptyset)$, so $\emptyset = acl(\emptyset)$.  By stability (\cite{Pillaybook}, Lemma 6.18) $S(G)$ is an intersection of relatively definable subgroups $L_{i}$ of $G$, all over $\emptyset$.  As $G$ has finite $U$-rank, there is $L_{i}$ such that $S(G)$ is the connected component of $L_{i}$. 

  Note that if the set $X$ of realizations of $p$ is contained in a single coset of $L_{i}$, then it is also contained in a single coset of $S(G)$: such a coset $C$ of $L_{i}$ will be defined over $\emptyset$ (it must be left invariant by every automorphism), and as $S(G)$ has bounded index in $L_i$ and $p$ is stationary, $X$ will also  be contained in a single coset of $S(G)$.

So now we write $L$ for $L_i$. As  $L$ is  relatively definable in $G$,  $G/L$ is also a type-definable group (in $T^{eq}$).  The group $L$ is not  connected, but  our assumptions imply that every connected type-definable subgroup of  $L$ is still type-defined over $\emptyset$ (as it is contained in the connected component of $L$, $S(G)$).

\smallskip 

\noindent 
{\em Claim 1. }If $X$ is  contained in $acl(L(\U))$, then $X$ is contained in a single  coset of $S(G)$.
\newline
{\em Proof of Claim 1.} If $X \subset acl(L(\U))$, as $X$ is a complete type over $\emptyset$, there is a formula $\phi(x,y)$ , with $y=(y_1,\ldots , y_k)$, which is algebraic for all $y$, such that $p(x) \vdash \exists y_1 \ldots  \exists y_k   ( \land_i  (y_i \in L \land y_i \in Y_i ) \land \phi(x,y))$, and $Y_i$ is a complete type over $\emptyset$. As $S(G)$ is the connected component of $L$,  $L$ is the union of a bounded number of cosets of $S(G)$ hence all elements of the complete type $Y_i$ are in the same coset of $S(G)$. Let $d_i$ be an element of $Y_i$, then $X \subset acl (d_1,\ldots ,d_k, S(G))$, so in particular $X$ is semi-pluriminimal.  As $X$ is a stationary type, it is indecomposable (\ref{stationaryindecomposable}). Let $ a \in X $, then $X-a$ is indecomposable and contains the identity, so  generates a connected infinitely definable group, which is  almost pluri-minimal (as $X$ is), and hence must be included in $S(G) $. So $X-a$ is contained in $S(G)$.  \qed

Let $\pi: G\to G/L$ be the canonical projection.  Let $a$ realize $p$ and let $b =\pi(a)$.  So $tp(b)$ is stationary too. If it is algebraic then by stationarity, it is a single point, whereby $X$ is contained in a single coset of $L$, hence, as noted above, also in a single coset of $S(G)$ as required.  Otherwise $tp(b)$ is nonalgebraic, and we aim for a contradiction.

Let $G_{b} = \pi^{-1}(b)$ and $X_{b} = X\cap G_{b}$.   Note that $G_{b}$ is a coset of $L$ so $L$ acts on $G_{b}$ by addition.  We will show, in Claims  2 and 3 below, together with the assumption that $Stab(p)$ is finite, that in fact $G_{b}\subset  acl(L,b)$  (even in the case when $tp(b)$ {\em is} algebraic).  The argument is in hindsight a Galois-theoretic argument  (finiteness of a certain definable automorphism group).  When $tp(b)$ is {\em not} algebraic, we get our contradiction by showing that the socle of $G$ has to properly contain $S(G)$. 

\smallskip

\noindent
{\em Claim 2.} There is a type-definable subgroup  $H$ of $L$ such that each orbit  of $H$ contained in $G_{b}$ is the set of realizations of a complete type over $Lb := L\cup\{b\}$  and moreover these are precisely the complete types over $Lb$ realized in $G_{b}$.  
\newline
{\em Proof of Claim 2.}  Consider any $a'$ in $X_b$. As remarked just before the statement of the Theorem, there is a small tuple $c$ from $L$ such that $r := tp(a'/c,b)$ implies $tp(a'/L,b)$.  Let $Z$ be the set of realizations of $r$. 

As $Z$ is also a complete type over $Lb$, for each $s\in L$, $s+ Z$ is  the set of realizations of a complete type over $Lb$. Moreover $s+Z \subset G_b$ and for any $x \in G_b$ , there is some $s \in L$ such that $x-a' =s$, so $x \in L+Z$, hence $G_b$ is the union of the $s+ Z$, which are pairwise disjoint or equal, as they are complete types over $Lb$.

  It remains to see that $Z$ (and so each of its $L$-translates) is an $H$-orbit for some type-definable subgroup $H$ of $L$.   Let 
  $H  =\{h\in L: h+a'\models r\}$.  Then $H$ is a type-definable subset of $S$, which  is clearly   independent of the choice of $a'$ realizing $r$ (as any two realizations of $r$ have the same type over $Lb$).  It follows that $H =\{h\in S: h+Z = Z\}$ is the stabilizer of the complete type $r$, and is hence a type-definable group (over $cb$). It follows also that $Z = a'' + H$ for any $a'' \in Z$: as $s+Z$ and $Z$ are either equal or disjoint, for any $a_1,a_2 \in Z $, $a_1-a_2 \in H$.

  Now for $s \in L$, $s+Z = s+a+H$, so is an $H$-orbit. Hence $G_b$ is is a union of $H$-orbits which are each a complete type over $Lb$. \qed

\vspace{2mm}
\noindent
As $X_{b}$ is a type-definable subset of  $G_{b}$ (over $b$),  it is also a union of such $H$-orbits. Let $H^0$ be the connected component of $H$, so  by assumption   $H^0$ is type-definable over $\emptyset$. As $H$ itself is a (bounded) union  of $H^0$-orbits, so is $X_b$. 

\smallskip
\noindent 
{\em Claim 3.} $H^{0}$ is contained in $Stab(p)$. 
\newline
    {\em Proof of Claim 3.} Let $h\in H^{0}$  independent from $a$ over $\emptyset$,  then $h +a \in X_b $ by the previous claim, so $h+a$ also realizes $p$. As $p$ is stationary , then for any $d$ realizing $p$ and independent from $h$ over $\emptyset$, $h+d$ will also realize $p$. \qed

\vspace{2mm}
\noindent
It follows from Claim 3 and  our assumption that $p$ has finite stabilizer,  that $H^{0}$ is finite, so trivial, and so $H$ is finite. Thus also the translate $Z$ of $H$ is finite. 
So $Z $, the set of realizations of $r = tp(a/b,c)$,  is defined by an algebraic  formula $\phi(x,b,c)$.   

Now we make use of the assumption  that $tp(b)$ is nonalgebraic. Note that $tp(b/c)$ remains non algebraic. If not, then $Z \subset acl(c)$, so $a \in acl(L)$, which was ruled out in Claim 1.


Let $Y$ be the set of realizations of $tp(b/c )$. We now work over $c$.  Let $Z'$ be the union of all the sets  defined by formulas $\phi(x,b',c)$ where $b'\in Y$.   Then $Z'$ is a type-definable (over $c$)  subset of $G$ which projects finite-to-one on $Y$. Let $Q$ be a complete minimal type extending $Z'$ over some finite $F \supset c$.  Then $Q/L$ must be infinite as  $Q \cap \pi^{-1} (b')$ is finite for every $b' \in Y$.  Let $d \in Q$, then $Q- d$ is indecomposable and contains the identity, so it generates a connected type-definable almost minimal subgroup in $G$,  which must hence be contained in $S(G)\subset L$, contradicting that $Q/L$ is infinite.  
\end{proof}

\section{The Socle for semiabelian varieties}\label{socleGsharp}

We study the model-theoretic socle for groups of the form $G^{\sharp}$ (described below)  where $G$ is a semiabelian variety, as part of reducing Mordell-Lang for semiabelian varieties to Mordell-Lang for abelian varieties.  In \cite{BBP2} our new  proof of Mordell-Lang for abelian varieties in characteristic $0$ made use of the weak socle theorem in its finite Morley rank form. 
But our new proof in the same paper of Mordell-Lang for abelian varieties over $\Fp(t)^{sep}$ did NOT make use of the weak socle theorem.  In the positive characteristic case, it is only for semiabelian varieties that the weak socle comes into the picture.  We also insist here that we do everything without appealing to the trichotomy theorem (for $U$-rank $1$-types in differentially closed or separably closed fields).  In fact in the appendix we give new proofs that $A^{\sharp}$ is $1$-based for a traceless simple abelian variety, in those cases where we know  how to prove Mordell-Lang without appeal to the trichotomy theorem.

\subsection{Analyzing the socle of $G^\sharp$} 

We will now analyze the model theoretic socle in the context  of the groups used for the model-theoretic proofs of function field Mordell-Lang, in particular the original proof of Hrushovski (\cite{Hrushovski}) and then use this analysis to  deduce Mordell-Lang for all semiabelian varieties from Mordell-Lang  for abelian varieties. This will be done in Section \ref{ML}. 

So we will consider, in characteristic $0$, a differentially closed field $(K,\partial)$, and $k$ its field of constants and in characteristic  $p$, $K$ a separably closed field of non-zero finite degree of imperfection, which we will assume to be $\aleph_1$-saturated, and $k = \Kpinf := \bigcap_{n} K^{p^n} $, the biggest algebraically closed subfield of $K$. We will in both cases refer to $k$ as the {\em field of constants} of $K$. This terminology makes sense also   in the positive characteristic case, as $K$ can  be endowed with a family  of iterative Hasse derivations, and $K^{p^\infty}$ is then the field of common constants  (see for example \cite{Ziegler} or \cite {BBP1}). In the present  paper, we do not directly use the  Hasse derivations framework. At some point, we will however use the formalism of $\lambda$-functions (for a fixed $p$-basis), which give the coordinates of elements of $K$ viewed as a $K^{p^n}$-vector space. The analogue of the Kolchin topology is the $\lambda$-topology, which is not Noetherian but is the limit of the $\lambda_n$-topologies, which are Noetherian. Quantifier elimination in this language implies that definable sets are $\lambda_n$-constructible for some $n$ (see \cite{Hrushovski} and \cite{BouscarenDelon} for details).

We let $G$ be a semiabelian variety over $K$, and the finite $U$-rank or Morley rank  connected  type-definable groups we will consider are the subgroups $G^\sharp$. We recall below briefly the main definitions and facts about $G^\sharp$ which we will be using. For more  details about definitions and properties of $G^\sharp$ we refer the reader to \cite{BBP1}(in particular Section 3 for the basic properties) and \cite{BBP2}.

From now on $G$ is a semiabelian  variety over $K$, we work in a big saturated  model $\U$, $K\preceq \U$, the field of constants of $\U$ will be denoted $C := C(\U)$, and $k = C(K)$. 

Recall that if $A$ is an abelian variety over some field $K_1$ , and $K_0<K_1$, $K_0$ algebraically closed, the $K_1/K_0$-trace of $A$ is a final object in the category of pairs $(B, f)$,  consisting
of an abelian variety B over $K_0$, equipped with a $K$-map of abelian varieties  $ f : B_{K_1} \mapsto A$, where $B_{K_1}$ denotes
the scalar extension $B \times_{K_0} {K_1}$.  We will just need here the property that some abelian variety $A$ has $K_1/K_0$-trace zero,  which we will just
call  {\em having $K_0$-trace zero},  if and only if  $A$ has no non trivial subabelian  variety $A_0$ isogenous to some abelian variety over $K_0$. 

\medskip 

\begin{definition}{\em{The group $G^\sharp$}.}
\newline (i) In characteristic $0$, $G^{\sharp}$ denotes  the ``Kolchin
closure of the torsion", namely the smallest definable (in the sense of
differentially closed fields) subgroup of $G(\U)$ which contains the
torsion subgroup (so note that $G^{\sharp}$ is definable  over $K$).
\newline
(ii) In positive characteristic, $G^{\sharp}$ denotes
$p^{\infty}(G({\U})) =_{def} \bigcap_{n}p^{n}(G(\U))$, a type-definable subgroup over $K$, it is the biggest divisible subgroup of $G(\U)$. 
\end{definition}

\begin{remark} {\rm In  characteristic $0$, the smallest definable
    subgroup containing the torsion subgroup exists by
    $\omega$-stability of the theory  $DCF_{0}$.  In  positive characteristic,
    $G^{\sharp}$ is only type-definable,  it is also the  smallest
    type-definable subgroup of $G(\U)$ which contains the
    prime-to-$p$ torsion of $G$.  Moreover, as noted, $G^\sharp$ is defined over $K$, and $G^{\sharp}(K)$ 
      is  also the biggest  divisible subgroup of $G(K)$.}    \end{remark}

Some basic facts which hold in all characteristics:

\begin{fact} \label{FactGsharpbasic}(i) $G^\sharp $ is also the   smallest Zariski dense type-definable subgroup of $G(\cal U)$.\\
(ii) $G^{\sharp}$ is connected (no relatively definable subgroup of finite index), and of finite $U$-rank in char. $p$, and finite Morley rank in char. $0$. \\
(iii) If $A$ is a simple abelian variety, $A^\sharp$ has no proper
  infinite 
 definable subgroup. It follows that $A^\sharp$ is almost minimal.  If $A$ is any abelian variety, then $A^\sharp = S(A^\sharp)$. \\ 
(iv)  If $G$ is the sum of a finite number of  $G_{i}$ then $G^{\sharp}$ is the sum of the $G_{i}^{\sharp}$.  \\
(v) If $G$ is a semiabelian variety over the field of constants  $k = C(K)$, then $G^\sharp = G(C)$. 
\end{fact}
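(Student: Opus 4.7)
The plan is to treat the five items in an order that minimizes repetition: (iv) as a self-contained algebraic observation, then (i) as the central characterization, followed by (ii), (iii), and (v), each of which reduces to (i) plus standard input.

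For (iv), in characteristic $p$ the identity $\bigcap_n p^n(G_1 \oplus G_2) = \bigcap_n p^n G_1 \oplus \bigcap_n p^n G_2$ is immediate since multiplication by $p^n$ respects the direct sum. In characteristic $0$, $\sum_i G_i^\sharp$ is a definable subgroup of $G$ containing $\operatorname{tors}(G) = \bigoplus_i \operatorname{tors}(G_i)$, hence contains $G^\sharp$; conversely, projecting $G^\sharp$ onto each factor $G_i$ gives a definable subgroup of $G_i$ containing $\operatorname{tors}(G_i)$, so containing $G_i^\sharp$, whence $G^\sharp \subseteq \sum_i G_i^\sharp$.

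For (i), Zariski density of $G^\sharp$ follows because torsion (resp.\ prime-to-$p$ torsion in char $p$) is Zariski dense in a semiabelian variety. The minimality is the main technical point: I would show that any Zariski dense type-definable subgroup $H \le G(\mathcal U)$ contains the corresponding torsion. After replacing $H$ by its connected component, the \'etale isogeny $[n]\colon G \to G$ for $n$ coprime to $p$ sends $H$ to a Zariski dense subgroup $nH \subseteq H$ of finite index, hence equal to $H$ by connectedness; combined with the standard arguments from \cite{BBP1} this gives $H \supseteq G^\sharp$. Item (ii) then follows easily: by divisibility of torsion, $nG^\sharp$ contains the (prime-to-$p$) torsion, so $nG^\sharp = G^\sharp$ by minimality in the definition of $G^\sharp$, which forces every finite-index type-definable subgroup of $G^\sharp$ to equal $G^\sharp$. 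The rank statements are Hrushovski's classical theorems, recalled in \cite{BBP1}.

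For (iii), given (i) and simplicity of $A$: any infinite type-definable subgroup $B \le A^\sharp$ has Zariski closure an algebraic subgroup of $A$, hence equal to $A$, so $B$ is Zariski dense and contains $A^\sharp$ by (i), giving $B = A^\sharp$. To upgrade this to almost minimality, pick a complete minimal type $Q \subset A^\sharp$ and apply the Indecomposability Theorem (Proposition~\ref{indecomposability}) to $Q - Q$ to get $(Q - Q)^m = A^\sharp$ for some $m$, whence $A^\sharp \subseteq \operatorname{acl}(Q)$. For general abelian $A$, Poincar\'e complete reducibility gives an isogeny $A \to \prod A_i$ with the $A_i$ simple, and (iv) combined with preservation of $\cdot^\sharp$ up to finite kernel under isogeny shows $A^\sharp$ is almost pluri-minimal, so $S(A^\sharp) = A^\sharp$. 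For (v), $G(C)$ is Zariski dense in $G$ and type-definable, so by (i), $G^\sharp \subseteq G(C)$; for the reverse inclusion one uses that $C$ is stably embedded in $\mathcal U$ with induced structure that of a pure algebraically closed field, so every type-definable subgroup of $G(C)$ is an intersection of algebraic subgroups of $G$ over $C$, and Zariski density of $G^\sharp$ in $G$ then forces $G^\sharp = G(C)$. The main obstacle is (i) in positive characteristic, where inseparability of $[p]$ blocks a direct \'etale-isogeny argument and the $p^\infty$-divisibility information must be extracted via the $\lambda_n$-topology as in \cite{BBP1}; once (i) is granted, the remaining items are formal consequences of the Indecomposability Theorem, Poincar\'e reducibility, and the stable embeddability of the field of constants.
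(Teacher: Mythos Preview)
The paper does not prove this statement: it is recorded as a \emph{Fact}, with the reader referred to \cite{BBP1} (Section~3) and \cite{BBP2} for the arguments. So there is no proof in the paper to compare your proposal against; your sketch is essentially the route taken in \cite{BBP1}, and your identification of (i) in positive characteristic as the one genuinely nontrivial point is accurate.

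That said, your argument for (iv) in characteristic $0$ is garbled. Your first sentence correctly shows $G^\sharp \subseteq \sum_i G_i^\sharp$ (since $\sum_i G_i^\sharp$ is definable and contains the torsion). Your ``converse'' then argues via projections that $\pi_i(G^\sharp)\supseteq G_i^\sharp$ and concludes \emph{the same inclusion} $G^\sharp\subseteq\sum_i G_i^\sharp$; the projection argument does not yield the reverse containment, and in any case knowing $\pi_i(G^\sharp)\supseteq G_i^\sharp$ gives no control on $G^\sharp$ from below. The correct reverse inclusion is immediate: $G^\sharp\cap G_i$ is a definable subgroup of $G_i$ containing $\operatorname{tors}(G_i)$, hence contains $G_i^\sharp$, so $\sum_i G_i^\sharp\subseteq G^\sharp$. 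Note also that the statement concerns a \emph{sum} $G=\sum_i G_i$, not a direct sum; you should either reduce to the direct product via the isogeny $\prod_i G_i\to G$ (using that $(\cdot)^\sharp$ is preserved under dominant homomorphisms, \cite{BBP1} Lemma~3.4), or run the argument directly with sums.
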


\begin{proposition}\label{newGsharp} 
(i) In characteristic $p$, every connected type-definable subgroup  of $G^\sharp $ is of the form $H^\sharp $, for some  closed connected subgroup (i.e. a semiabelian subvariety) $H$ of $G$. It follows that $G^\sharp $ is rigid.\\
(ii) In characteristic $0$, if $A$ is an abelian variety, every connected definable subgroup of $A^\sharp$ is of the form $B^\sharp$ for some abelian subvariety $B$ of $A$. It follows that $A^\sharp$ is rigid.
\end{proposition}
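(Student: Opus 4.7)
The plan is to start, in both cases, with a connected (type-)definable subgroup $H'$ of $G^\sharp$ (respectively $A^\sharp$) and let $H$ be the Zariski closure of $H'$ in the ambient variety. First, I would check that $H$ is a connected algebraic subgroup --- so a semiabelian subvariety of $G$ in (i), and an abelian subvariety of $A$ in (ii): if its identity component $H^0$ were proper, then $H' \cap H^0$ would be a relatively type-definable subgroup of $H'$ of finite index, contradicting connectedness of $H'$. The inclusion $H^\sharp \subseteq H'$ is then immediate from Fact~\ref{FactGsharpbasic}(i), since $H'$ is Zariski dense in $H$ by construction and $H^\sharp$ is the smallest Zariski dense type-definable subgroup of $H$.

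The substantive content is the reverse inclusion $H' \subseteq H^\sharp$. For (i), I would establish $p$-divisibility of $H'$: the kernel of $[p]\colon H' \to H'$ is contained in $G[p](\mathcal U)$, which is finite (as $\mathcal U$ is separably closed, only the \'etale part of the group scheme $G[p]$ contributes $\mathcal U$-points). Hence $pH'$ has the same $U$-rank as $H'$, and a type-definable subgroup of full $U$-rank inside a connected type-definable group of finite $U$-rank must have bounded index, so coincides with the group. Thus $pH' = H'$, and iterating we get $H' \subseteq p^n H(\mathcal U)$ for every $n$, so $H' \subseteq \bigcap_n p^n H(\mathcal U) = H^\sharp$.

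For (ii), divisibility alone is not sufficient. Here I would use the identity $A^\sharp \cap H = H^\sharp$ valid for every abelian subvariety $H$ of $A$. This rests on the description of $A^\sharp$ as the kernel of the logarithmic derivative $\bar{\mu}_A\colon A \to LA$: by functoriality, the restriction $\bar{\mu}_A|_H$ factors as $H \xrightarrow{\bar{\mu}_H} LH \hookrightarrow LA$, so its kernel equals $\ker(\bar{\mu}_H) = H^\sharp$. Since $H' \subseteq A^\sharp \cap H$ by construction, $H' \subseteq H^\sharp$ follows. Rigidity in both parts is then immediate: any closed algebraic subgroup of a (semi)abelian variety over $K$ is defined over $K^{alg}$, hence so is $H$, and therefore so is $H^\sharp$; thus every connected (type-)definable subgroup of $G^\sharp$ (resp. $A^\sharp$) is defined over $acl(K)$, yielding rigidity.

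The step I expect to require the most care is the $p$-divisibility argument in (i). Although $G^\sharp$ itself is $p$-divisible, an arbitrary connected type-definable subgroup $H'$ does not obviously inherit this property, and one has to argue carefully via $U$-rank and connectedness considerations inside the type-definable category (rather than within an ambient superstable theory) in order to conclude $pH' = H'$.
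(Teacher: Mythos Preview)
Your outline matches the paper's proof: take the Zariski closure $H$ of the given subgroup, obtain $H^\sharp \subseteq H'$ from Fact~\ref{FactGsharpbasic}(i), and obtain $H' \subseteq H^\sharp$ via divisibility in (i) (the paper simply cites \cite{BD} for this) and via $B(\mathcal U)\cap A^\sharp = B^\sharp$ in (ii).

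Two technical points deserve adjustment. In (ii), an abelian variety that does not descend to the constants carries no $D$-group structure, so there is in general no logarithmic derivative $A \to LA$ with kernel $A^\sharp$; your functoriality idea is correct, but the relevant map is the Manin homomorphism, built through the universal vectorial extension $\tilde A$. The paper sidesteps this by quoting exactness of the $\sharp$-functor for abelian varieties in characteristic~$0$ (\cite{BBP1}, Proposition~5.23), which yields $B^\sharp = B(\mathcal U)\cap A^\sharp$ directly. For rigidity in characteristic~$p$, note that rigidity of the semiabelian variety $G$ actually gives that its connected closed subgroups are defined over $K$ itself, not merely over the field-theoretic $K^{alg}$; since $K^{alg}$ is not contained in the separably closed model, you need this stronger statement to conclude that $H^\sharp$ is type-defined over $K = acl(K)$.
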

\begin{proof} (i) Let $R$ be a connected type-definable subgroup of $G^\sharp$.   By $U$-rank inequalities, and because for each $n$, the $n$-torsion is finite, $R$ is divisible (see for example \cite{BD}).  Let $H := \overline{R}$,  the Zariski closure of $R$ in $G$. Then as $H^\sharp $ is the biggest divisible subgroup of $H$, $R \subset H^\sharp$. But (Fact \ref{FactGsharpbasic} (i)) $H^\sharp$ is also the smallest type-definable subgroup of $H$ which is Zariski dense in $H$. Hence $R = H^\sharp$. Now every  connected closed subgroup of $G$ is defined over $K$ (by rigidity of semiabelian varieties), so $H$ is defined over $K$ and hence so is  $R= H^\sharp$, and it follows that $G^\sharp $ is rigid.\\
(ii) Let $R$ be a connected definable subgroup of $A^\sharp$. Then $B := \overline{R}$, the Zariski closure of $R$ in $A$, is an abelian subvariety of $A$.  By exactness of the $\sharp$-functor for abelian varieties in characteristic $0$ (Proposition 5.23 in \cite{BBP1}), $B^\sharp = B(\U) \cap A^\sharp$, and so $B^\sharp \supset R$. But (Fact \ref{FactGsharpbasic} (i)) $B^\sharp$ is the smallest Zariski-dense definable subgroup of $B(\U)$, hence $R = B^\sharp$. As in (i) above, it follows that $A^\sharp$ is rigid.\end{proof}

\medskip

\begin{remark}\label{notexact} {\rm In characteristic $0$, it is not true that for $G$, any  semiabelian variety  over $K$, every connected definable subgroup of $G^\sharp$ is of the form $H^\sharp$.  We have shown in \cite{BBP1}(Section 3 and Cor. 5.22)   that there are cases when,  for $G$ of the form: $0 \rightarrow T  \rightarrow G \rightarrow A\rightarrow 0$ , the induced sequence 
    $T^\sharp  \rightarrow G^\sharp  \rightarrow A^\sharp $ is not exact, which is equivalent to the fact that  $R := G^\sharp \cap T(\U)$, which is a connected definable group, strictly contains ${T}^\sharp$. But if $R = H^\sharp$, for some $H$, then $\overline{R} = H < T $ but as ${T}^\sharp \subset R$, $\overline R = T = H$. Contradiction. 
Note that $R/T^{\sharp}$ is a finite-dimensional vector space over the contants, so when it has Morley rank $\geq 2$, $G^{\sharp}$ will not be rigid. 
The same example also gives, in characteristic $p$, an induced $\sharp$-sequence which is not exact, hence such that   $R \not= T^\sharp$, but, in contrast to the characteristic $0$ case, $R$ will not be connected (in fact 
$T^\sharp$ will be the connected component of $R$).}\end{remark} 

\medskip 

Now as the group $G^\sharp$ has finite $U$-rank in characteristic $p$, and finite Morley rank in characteristic $0$, we know by the preceding section, that it has a (model-theoretic) socle,  the maximal almost pluri-minimal subgroup $S(G^\sharp)$. We know that there are a finite number $Q_0,\ldots, Q_n$ of pairwise orthogonal minimal types in the characteristic $p$ case, and of pairwise orthogonal  strongly minimal sets in the characteristic $0$ case, such that 
$S(G^\sharp) = B_{Q_0} +\ldots  + B_{Q_n}$ where each $B_{Q_j}\subset acl ( F_j Q_j)$ is the maximal connected type-definable subgroup of $G^\sharp$ which is almost minimal with respect to $Q_j$. Furthermore, any other minimal (or strongly minimal) $Q$ must be non orthogonal to one of the $Q_j$.

\smallskip 

\begin{remark}{\rm  Whether in characteristic $0$ or $p$,  there is at most one minimal type (resp. strongly minimal set) amongst the $Q_j$ which is non orthogonal
to the field  of constants in $\U$,  $C$: recall that, in the characteristic $p$ case, $C= U^{p^\infty}$ is a pure   type-definable algebraically closed field, hence with a unique  minimal generic type, and that in characteristic $0$, $C$  is a pure definable algebraically closed field, hence a strongly minimal set. This particular minimal type (or strongly minimal se)  will be denoted, in what follows, by $Q_0$. }\end{remark}

\begin{proposition} \label{propertiessocle} In all characteristics: \\
{\bf 1.} If $G$ is a semiabelian variety over $K$ and not an abelian variety, or if $G$ is   an abelian 
variety over $K$, with $k$-trace non zero, then, there is in  $G^\sharp$  a minimal type  (or a strongly minimal set in characteristic $0$), $Q$,  non orthogonal to the field of constants $C$.\\
{\bf 2.}  Let $R$ be any  definable connected subgroup of $G(\U)$  of finite Morley rank in characteristic $0$, or  any connected type-definable subgroup of $G(\U)$ of finite $U$-rank in characteristic $p$. Let $Q$ be  a strongly minimal set (or a minimal type-definable set) in $R$,  non orthogonal to $C$, and let  $R_Q\subset R$  be  any connected  type-definable subgroup of $R$ which is   almost minimal with respect to $Q$. Then there exists a semiabelian variety $H<G$,
$R_Q\subset H(\U)$, $H^\sharp = R_Q$, and  a semiabelian  variety $J$ over $k$ with  an isogeny $f$ from $J$ onto $H$  such that  
$ f^{-1}({H}^\sharp ) = J(C)$.\\
{\bf 3.}  Let $R$ be any  definable connected subgroup of $G(\U)$  of finite Morley rank in characteristic $0$, or  any connected type-definable subgroup of $G(\U)$ of finite $U$-rank in characteristic $p$. Let  $Q$ be  a  strongly minimal set (or a minimal type)  in $R$   orthogonal to $C$ and let   $R_Q\subset R$  be  any connected  type-definable subgroup of $R$,  almost minimal with respect to $Q$. Then  $A := \overline {R_Q}$ is an abelian variety with $k$-trace zero and $R_Q = A^\sharp$. 
 \end{proposition}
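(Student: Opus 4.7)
My plan is to establish the three conclusions in sequence: first $A^{\sharp} \subseteq R_Q$, then that $A$ is an abelian variety with $k$-trace zero, and finally the reverse inclusion $R_Q \subseteq A^{\sharp}$.

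The first inclusion is immediate: $R_Q$ is Zariski dense in $A = \overline{R_Q}$ by definition, and Fact \ref{FactGsharpbasic}(i) says $A^{\sharp}$ is the smallest Zariski dense type-definable subgroup of $A(\U)$, so $A^{\sharp} \subseteq R_Q$.

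For the second assertion, suppose for contradiction that $A$ is not abelian, or that $A$ is abelian with $k$-trace non-zero. Then Part $1$ of this Proposition, applied to $A$, yields in $A^{\sharp}$ a minimal type (or, in characteristic $0$, a strongly minimal set) $Q^{*}$ non-orthogonal to $C$. Since $A^{\sharp} \subseteq R_Q$, $Q^{*}$ is realized in $R_Q \subseteq acl(F \cup Q(\U))$ for some finite $F$. A standard stability argument (a non-algebraic element in $acl(F,\bar b)$ with $\bar b$ a tuple from $Q(\U)$ must fork with $\bar b$ over the appropriate base) then shows that every non-algebraic minimal type realized in $R_Q$ is non-orthogonal to $Q$; in particular $Q^{*} \not\perp Q$. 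Since non-orthogonality is an equivalence relation on minimal (regular) types, combined with $Q \perp C$, this forces $Q^{*} \perp C$, contradicting the choice of $Q^{*}$.

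It remains to prove $R_Q \subseteq A^{\sharp}$. In characteristic $p$: $R_Q$ is connected type-definable of finite $U$-rank, and for each $n$ the $n$-torsion of $A(\U)$ is finite, so $[n](R_Q)$ has the same $U$-rank as $R_Q$ and coincides with $R_Q$ by connectedness. Hence $R_Q$ is divisible, and $R_Q \subseteq A^{\sharp}$ since $A^{\sharp}$ is, in characteristic $p$, by definition the maximal divisible subgroup of $A(\U)$. In characteristic $0$, now that $A$ is known to be abelian: the Manin homomorphism realizes $A(\U)/A^{\sharp}$ as a subgroup of the vector group $L(A)(\U)$, and the image $W$ of $R_Q$ in this quotient is a definable connected subgroup of finite Morley rank of this vector group, hence a finite-dimensional $C$-vector subspace, in particular $C$-internal. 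But $W$ is also almost $Q$-internal, as a quotient of the almost $Q$-internal set $R_Q$, so $Q \perp C$ forces $W$ to be algebraic over the base parameters, and by connectedness $W = 0$. Thus $R_Q \subseteq A^{\sharp}$.

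The main obstacle I expect lies in the characteristic-$0$ part of the reverse inclusion: one needs the structural fact that connected definable subgroups of finite Morley rank of $L(A)(\U)$ are $C$-vector subspaces, together with the stability-theoretic principle that simultaneous almost $Q$-internality and $C$-internality of a type-definable set forces algebraicity when $Q \perp C$. The characteristic-$p$ case is considerably cleaner, as $A^{\sharp}$ is by definition the largest divisible subgroup.
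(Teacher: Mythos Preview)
Your argument for Part~3 is correct, and for the first two steps together with the characteristic-$p$ half of the third it coincides with the paper's proof. The paper obtains $A^\sharp \subseteq R_Q$ from Fact~\ref{FactGsharpbasic}(i), deduces that $A$ is an abelian variety of $k$-trace zero by the contrapositive of Part~1 (observing, exactly as you do, that $A^\sharp \subseteq R_Q \subseteq acl(F \cup Q)$ is itself almost minimal with respect to $Q$, hence orthogonal to $C$), and then invokes Proposition~\ref{newGsharp} to conclude $R_Q = B^\sharp$ for some $B \leq A$, forcing $B = A$. In characteristic $p$ this invocation is unproblematic, and the proof of Proposition~\ref{newGsharp}(i) is precisely the divisibility argument you spell out.

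In characteristic $0$ your route genuinely diverges and is in fact more complete than what is written in the paper. Proposition~\ref{newGsharp}(ii) as stated applies only to connected definable subgroups \emph{of} $A^\sharp$, whereas at the point of invocation one only has $R_Q \subseteq A(\U)$; the inclusion $R_Q \subseteq A^\sharp$ is exactly what remains to be shown, so the paper's citation is at best elliptic. Your Manin-homomorphism argument closes this gap cleanly: the image of $R_Q$ in $A(\U)/A^\sharp$, embedded in a vector group, is a connected definable subgroup of finite Morley rank, hence a finite-dimensional $C$-vector space and in particular $C$-internal; being simultaneously almost $Q$-internal with $Q \perp C$, it must be trivial. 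This is the natural way to justify the step, and it makes your characteristic-$0$ argument self-contained where the paper's is not.
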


\begin{proof}\\
\noindent 1:  If $G$ is a semiabelian variety over $K$, and not an abelian
variety,  $G$ has a non trivial closed connected
subgroup $T$ which is  a torus, that is, isomorphic to a product of
${\mathbb G}_m$ (defined over $C$). Then $T^\sharp $  is isomorphic to 
$({\mathbb   G}_m)^n (C)$. It is hence almost minimal with respect to the strongly minimal field $C$ in characteristic $0$, or  to the minimal generic type of the field $C$ in characteristic $p$.  There must then be in $T^\sharp$, hence in $G^\sharp$,  a  strongly minimal set or a minimal type, $Q$,  which is not orthogonal to the field $C$.\\
If $G$ is an abelian variety with $k$-trace non zero, then there is some $A <G$, $A$ isogenous to some abelian
variety $B$ over $k = C(K)$, it follows that $A^\sharp$ is definably isogenous  to $B(C)$ (by \cite{BBP1} Lemmas 3.3 and 3.4). Hence,   $A^\sharp = S(A^\sharp)$, must be non orthogonal to the field $C$. \\

\noindent 2: First we apply Proposition \ref{nonorthogonal}, so there is a type-definable group $N \subset dcl(C)$ and a definable surjective homomorphism $h : R_Q \mapsto N$, with finite kernel. As in \cite{Hrushovski} (see also \cite{bouscaren} or \cite{BD}),   it follows from the ``pureness'' of the algebraically closed field of constants and from the model theoretic version of Weil's theorem, that $N$ is definably isomorphic to $J(C)$ for some commutative connected algebraic group $J$ over the  field $C$. As $R_Q$ is a divisible group and the $n$-torsion is finite for each $n$, there is a definable ``dual'' isogeny, $f$ from $J(C)$ onto $R_Q$. By quantifier elimination in the algebraically closed  field $C$,  the definable map $f$ is given locally by  rational functions (in characteristic $p$, a priori $f$ is given locally by $p$-rational functions, that is might include some negative powers of the Frobenius, but by changing $J$ one gets also to a locally rational map). \\
      Taking Zariski closures, extend $f$ to a surjective morphism $\tilde{f}$ of algebraic groups from $J$ onto $H:= \overline{R_Q}$. By Chevalley's theorem, there is a connected  unipotent algebraic subgroup of $J$ over $C$, $N$, such that $J/N$ is the maximal semiabelian quotient of $J$. A priori, ${\tilde{f}}$ is no longer an isogeny, but by  maximality, as $H$ itself is a semiabelian variety, $N < Ker {\tilde{f}}$ and as $f$ was an isogeny in $J(C)$, $N(C)$ must be finite, hence trivial,  as $N$ is connected. It follows that $J$ itself is a semiabelian variety. But then  the connected component of $  Ker {\tilde{f}}$ which is a connected closed subgroup of $J$ is defined over $C$, so  must be trivial, hence ${\tilde{f}}$ is also an isogeny.\\
      Now as $J$ is defined over $C$, $J^\sharp = J(C)$ and  as $f$ is dominant, $f(J^\sharp) = H^\sharp $ (\cite{BBP1} Lemma 3.4) and so $R_Q = H^\sharp$. \\
      
\noindent 3: If $\overline{R_Q} = H$, then $H^\sharp \subset R_Q \subset H(\U)$ (as $H^\sharp$ is the smallest definable subgroup Zariski dense in $H$). But then it is also the case that $H^\sharp$ is almost minimal with respect to $Q$ and  must be orthogonal to the field $C$, hence by (i) $H$ must be an abelian variety of $k$-trace zero. Then we know by  Proposition \ref{newGsharp}, that $R_Q$ is of the form $B^\sharp$ for some $B < H$ and we must have $B=H$.    
\end{proof}

\begin{corollary} \label{char.0} 
(i) In characteristic $0$: (a) If $R$ is a connected definable subgroup of $S(G^\sharp)$, then $R = H^\sharp$ for some semiabelian variety 
$H<G$. \\
(b) If $L$ is a connected definable subgroup of $G(\U)$ of finite Morley rank, and $L \supset G^\sharp$, then 
$S(L) = S(G^\sharp)$. \\
(ii) In all characteristics: $S(G^\sharp)$ and all its connected type-definable subgroups are  defined over $K$ and rigid.
\end{corollary}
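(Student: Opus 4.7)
The plan is to exploit Proposition \ref{propertiessocle} (the structural analysis of almost $Q$-minimal subgroups) together with the $\sharp$-additivity of Fact \ref{FactGsharpbasic}(iv), so that every relevant connected (type-)definable subgroup gets identified as $H^{\sharp}$ for a semiabelian subvariety $H<G$, from which both the definability over $K$ and the rigidity statements in (ii) fall out.

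For (i)(a) I would argue as follows. Since $R$ is a connected definable subgroup of the almost pluri-minimal group $S(G^{\sharp})$, $R$ is itself almost pluri-minimal, so $S(R)=R$. Decompose $R$ via Proposition \ref{existencesocle} as $R = B^{R}_{Q_1}+\cdots+B^{R}_{Q_m}$ where each $B^{R}_{Q_i}$ is the maximal connected (type-)definable subgroup of $R$ almost minimal with respect to a strongly minimal set $Q_i$. Apply Proposition \ref{propertiessocle}(2) or (3), with our $R$ in the role of $R$ there: whether $Q_i$ is non-orthogonal or orthogonal to $C$, one gets $B^{R}_{Q_i}=H_{i}^{\sharp}$ for some semiabelian $H_i<G$ (in case (3), $H_i$ is an abelian variety of $k$-trace zero). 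Then by Fact \ref{FactGsharpbasic}(iv), $R = H_1^{\sharp}+\cdots+H_m^{\sharp} = (H_1+\cdots+H_m)^{\sharp}$, and $H:=H_1+\cdots+H_m$ is a semiabelian subvariety of $G$.

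For (i)(b), the inclusion $S(G^{\sharp})\subseteq S(L)$ is immediate since $S(G^{\sharp})$ is an almost pluri-minimal connected subgroup of $L$ and $S(L)$ is maximal with that property. For the reverse inclusion, decompose $S(L)=B^{L}_{Q_1}+\cdots+B^{L}_{Q_m}$ and apply Proposition \ref{propertiessocle}(2)/(3) with $R:=L$: each $B^{L}_{Q_i}$ equals $H_i^{\sharp}\subseteq G^{\sharp}$ for some semiabelian $H_i<G$. Hence $S(L)\subseteq G^{\sharp}$, and being almost pluri-minimal it must lie inside the maximal such subgroup of $G^{\sharp}$, namely $S(G^{\sharp})$.

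For (ii), in positive characteristic any connected type-definable subgroup of $S(G^{\sharp})$ is a fortiori a connected type-definable subgroup of $G^{\sharp}$, so Proposition \ref{newGsharp}(i) identifies it as $H^{\sharp}$ for some semiabelian $H<G$; rigidity of semiabelian varieties gives that $H$, hence $H^{\sharp}$, is defined over $K$. In characteristic $0$, $DCF_0$ is $\omega$-stable so every type-definable subgroup of the finite Morley rank group $S(G^{\sharp})$ is actually definable, and we invoke (i)(a), again coupled with rigidity of semiabelian varieties to conclude definability over $K$. I do not expect any genuine obstacle; the only subtle point is that applying Proposition \ref{propertiessocle} to the summands $B^{R}_{Q_i}$ requires them to be almost $Q_i$-minimal type-definable subgroups of the ambient group, which is guaranteed by construction via Proposition \ref{existencesocle}.
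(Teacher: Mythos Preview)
Your proposal is correct and follows essentially the same route as the paper: decompose via Proposition~\ref{existencesocle}, identify each almost-minimal summand as an $H_i^{\sharp}$ via Proposition~\ref{propertiessocle}(2)/(3), and reassemble. The only point you leave implicit is why $H_i^{\sharp}\subseteq G^{\sharp}$ in (i)(b); the paper spells this out (in characteristic~$0$, $G^{\sharp}$ is the Kolchin closure of the torsion of $G$, which contains the torsion of any $H_i<G$), but it is immediate from the definition.
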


Before beginning  the proof, let us make immediately some remarks: In characteristic $p$ (a) is true more generally   for all connected type-definable subgroups of $G^\sharp$ itself (Fact \ref{newGsharp}), but we saw that in characteristic $0$ this may be false if $G$ is not an abelian variety (Remark \ref{notexact}).   Secondly (b) is vacuous in characteristic $p$ as any  connected
type-definable subgroup of $G(\U)$ with finite $U$-rank must be divisible, hence contained in $G^\sharp$ which is the biggest such. \\

\noindent\begin{proof} (i)(a) If $R \subset S(G^\sharp)$, then $R$ is almost pluri-minimal, and hence $R = S(R)$. It follows that $R = R_{Q_0} + \ldots +R_{Q_n}$, where the $Q_j$ are pairwise orthogonal strongly minimal sets and $R_{Q_j}$ is a connected definable group, almost minimal with respect to $Q_j$. It follows from  Proposition \ref{propertiessocle} that, for each $j$, $R_{Q_j} = {H_j}^\sharp $ for some $H_j < G$ and so  $R = {H_0}^\sharp + \ldots +{H_n}^\sharp = H^\sharp $ for $H = H_0+ \ldots +H_n$, using the definition of the $\sharp$-group in characteristic zero as the definable closure of the torsion. \\
(i)(b)  Recall that $G^\sharp$ is the smallest definable subgroup of $G$ which contains all the torsion of $G$. It follows that for any $H < G$, $H^\sharp $ , the smallest definable group containing all the torsion of $H$, must be contained in $G^\sharp$. 
Now by Proposition \ref{propertiessocle}, $S(L) =  {H_0}^\sharp + \ldots +{H_n}^\sharp $ 
where $H_j <G$ , hence $S(L) \subset G^\sharp$, By maximality of $S(G^\sharp)$, $S(L) \subset S(G^\sharp)$ and as $G^\sharp \subset L$, $S(G^\sharp ) \subset S(L)$.\\
(ii) follows from the rigidity of $G$, from (i)(a) and from Proposition \ref{newGsharp}.
\end{proof}

\medskip

We can now completely describe $S(G^\sharp)$: 

\begin{proposition} \label{decomposition}If $G$ is semiabelian over $K$, 
we have that $S(G^\sharp) = H^\sharp = {H_0}^\sharp + {H_1}^\sharp + \ldots + {H_n}^\sharp $, where: 

-- $H = H_0 + \ldots + H_n$, where for $i \not= j$, $H_i$ and $H_j$ have finite intersection,

 -- for each $j$, $H_j^\sharp$ is  almost strongly  minimal in characteristic $0$ and almost minimal in characteristic $p$

-- $H_0< H$ is the unique maximal  semiabelian subvariety of $G$ which is isogenous   to some semiabelian variety 
 defined over $k$. Note that $H_0$ will be trivial iff $G$ is an
abelian variety with  $k$-trace zero

-- for $j > 0$, $H_j<H$ is an abelian variety with $k$-trace zero  

-- for $i \not= j$, ${H_j}^\sharp $ and  ${H_i}^\sharp$ are orthogonal and have finite intersection 

-- $A_G := H_1 +\ldots +H_n$ is the unique maximal  abelian subvariety of $G$ with $k$-trace zero.

\end{proposition}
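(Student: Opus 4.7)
The plan is to combine the abstract socle decomposition of Proposition~\ref{existencesocle} with the structural analyses provided by Proposition~\ref{propertiessocle} and Corollary~\ref{char.0}, and then identify the algebraic data.

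First I would apply Proposition~\ref{existencesocle} to $G^\sharp$ to write $S(G^\sharp) = B_{Q_0} + B_{Q_1} + \ldots + B_{Q_n}$ with the $Q_i$ pairwise orthogonal minimal types (strongly minimal sets in characteristic $0$). Label the indices so that, if such a minimal exists, $Q_0$ is the unique one non-orthogonal to the field of constants $C$ (uniqueness was observed in the remark following Corollary~\ref{char.0}); otherwise take $H_0 = 0$ and drop the index $0$. Proposition~\ref{propertiessocle}(2) applied to $Q_0$ yields a semiabelian subvariety $H_0 < G$, isogenous to some semiabelian variety defined over $k$, with $B_{Q_0} = H_0^\sharp$; Proposition~\ref{propertiessocle}(3) applied to each $Q_j$ for $j \geq 1$ yields an abelian subvariety $H_j = \overline{B_{Q_j}} < G$ of $k$-trace zero with $B_{Q_j} = H_j^\sharp$. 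Each $H_j^\sharp$ is almost minimal (almost strongly minimal in characteristic $0$) by construction.

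Set $H = H_0 + H_1 + \ldots + H_n$ and $A_G = H_1 + \ldots + H_n$. Fact~\ref{FactGsharpbasic}(iv) gives $H^\sharp = H_0^\sharp + \ldots + H_n^\sharp = S(G^\sharp)$, and the pairwise orthogonality and finite intersection of the $B_{Q_j}$'s transfer directly to the $H_j^\sharp$. For the algebraic intersections $H_i \cap H_j$ with $i \neq j$: if the connected component $(H_i \cap H_j)^0$ were positive-dimensional, its $\sharp$-group would be Zariski dense in it (Fact~\ref{FactGsharpbasic}(i)) and hence infinite, yet contained in $H_i^\sharp \cap H_j^\sharp = B_{Q_i} \cap B_{Q_j}$, which is finite --- contradiction. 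The maximality of $H_0$ is then quick: if $H_0' < G$ is semiabelian and isogenous to some semiabelian over $k$, then $(H_0')^\sharp$ is definably isogenous to $J'(C)$ for a suitable $J'$ over $k$ (using Lemmas~3.3 and 3.4 of \cite{BBP1} as in the proof of Proposition~\ref{propertiessocle}(1)), hence almost minimal with respect to the generic type of $C$; the maximality of $B_{Q_0}$ gives $(H_0')^\sharp \subset H_0^\sharp$, and Zariski closure yields $H_0' < H_0$. Note that $H_0$ is trivial precisely when $G$ contains no nontrivial semiabelian subvariety isogenous to one over $k$, which for semiabelian $G$ amounts to $G$ being abelian with $k$-trace zero, since any subtorus is already defined over $k$.

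For the maximality of $A_G$, let $A' < G$ be an abelian subvariety with $k$-trace zero. By Fact~\ref{FactGsharpbasic}(iii), $(A')^\sharp = S((A')^\sharp)$; decompose it by its own socle. Every minimal type $Q'$ appearing must be orthogonal to $C$ --- otherwise Proposition~\ref{propertiessocle}(2) applied inside $A'$ would produce a nontrivial abelian subvariety of $A'$ isogenous to one over $k$, contradicting $k$-trace zero. Since non-orthogonality of minimal (regular) types is an equivalence relation, each such $Q'$ is non-orthogonal to some $Q_j$ with $j \geq 1$, hence $B_{Q'} \subset B_{Q_j}$ by the maximality defining $B_{Q_j}$; summing, $(A')^\sharp \subset A_G^\sharp$, and then $A' < A_G$ by Zariski closure. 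The main delicate step is precisely this last one: it requires the dichotomy ``almost $C$-internal versus orthogonal to $C$'' from Proposition~\ref{propertiessocle}(2)--(3), together with the standard transitivity of non-orthogonality among minimal types. The remainder is a bookkeeping exercise, with the caveat that one must track the two characteristics in parallel (in characteristic $0$, relying on Corollary~\ref{char.0}(i)(a) to realize connected definable subgroups of $S(G^\sharp)$ as $\sharp$-groups; in characteristic $p$, on Proposition~\ref{newGsharp}(i)).
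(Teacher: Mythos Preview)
Your proof is correct and follows essentially the same route as the paper: decompose $S(G^\sharp)$ via Proposition~\ref{existencesocle}, identify each $B_{Q_j}$ as some $H_j^\sharp$ through Proposition~\ref{propertiessocle}(2)--(3), and establish the maximality of $H_0$ and $A_G$ by pushing any competitor through the $\sharp$-functor and invoking the maximality of the corresponding $B_{Q_j}$. Your explicit argument that the algebraic intersections $H_i\cap H_j$ are finite (via $((H_i\cap H_j)^0)^\sharp\subset H_i^\sharp\cap H_j^\sharp$) is in fact a bit more complete than the paper's proof, which only spells out finiteness at the level of the $B_{Q_j}$.
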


\begin{proof}
Following the notation from the previous section, we know that $S(G^\sharp) = B_{Q_0} + B_{Q_1}+ \ldots + B_{Q_n}$, where the $Q_j$ are minimal type-definable sets  (definable in the char. $0$ case), and where we decide that $Q_0$ denotes the (generic type of) the field of constants. Each $B_{Q_j}$ is the maximal type-definable connected subgroup of $G^\sharp$ which is almost minimal with respect to $Q_j$. The $Q_j$ are pairwise orthogonal and any other minimal $Q$  in $G^\sharp $ is non orthogonal to one of the $Q_j$.

By  Proposition \ref{propertiessocle},  $B_{Q_0} =0$ if and only if $G $ is an abelian variety with $k$-trace zero. 

By Proposition \ref{newGsharp} and Corollary \ref{char.0}, $S(G^\sharp) = H^\sharp$ for some $H <G$, and for each $j$, $B_{Q_j} = {H_j}^\sharp $ for some $H_j< H$. It follows that $H = H_0 + H_1+\ldots + H_n$. \\
By Proposition \ref{propertiessocle},  $H_0 = \overline{B_{Q_0}} $ is isogenous to some $D$ defined over $k$. We claim that it is maximal such in $G$, that is that any other $J < G$, isogenous to some semiabelian variety $E$ over $k$ must be contained in $H_0$: indeed then $J^\sharp$ is definably isogenous to  $E^\sharp = E(C)$.  It follows that $E^\sharp $ is almost minimal with respect to $Q_0$, and as $J^\sharp =  f^{-1}(E(C))$ and $f$ has finite kernel, $J^\sharp$ is also almost minimal with respect to $Q_0$. But then,  by maximality of $B_{Q_0}$,  we have that  $J^\sharp \subset B_{Q_0}$ and passing to Zariski closures, $J < H_0$. 

As for $j >0$, $Q_j$ is orthogonal to $Q_0$ hence to the field of constants, by Proposition \ref{propertiessocle}, each $H_j := \overline{B_{Q_j}} $ must be an abelian variety of $k$-trace zero. 

Let $A_G:= H_1+\ldots +H_n$, $A_G$ is an abelian variety with $k$-trace zero. We claim  that it is maximal such in $G$: Suppose there is another one, $R$. Then as $S(R^\sharp) = R ^\sharp$ , $R^\sharp = R_1 + \ldots +R_m$, where each $R_i$ is almost minimal with respect to a (strongly) minimal $P_i$. For each $i$, $P_i$ must be non orthogonal to one  of the minimal $Q_j$. It cannot be $Q_o$ because $R$  has $k$-trace zero, hence for some $j >1$, by maximality of $B_{Q_j}$, $R_i \subset B_{Q_j}= {H_j}^\sharp$, and $R ^\sharp \subset {A_G}^\sharp$. Passing to Zariski closures, $R < A_G$. 

Finally, we know that for $i \not= j$, $Q_j$ and $Q_i$ are orthogonal. It follows that, in characteristic $zero$, the two almost strongly minimal groups $B_{Q_j}$ and ${B_{Q_i}}$ have finite intersection. In characteristic $p$ we can a priori conclude only that the two almost minimal groups  $B_{Q_j}$ and ${B_{Q_i}}$ have an intersection which is contained in the algebraic closure of a finite set. But as this intersection is also type-definable, it will have to be finite (by compactness). 
\end{proof}

We will give some examples of socles of $G^\sharp $ in the  case of semiabelian varieties in \ref{calculations},  

But for now we summarize what we will need in the next section:

\begin{corollary}\label{socledescribe}
If $G$ is semiabelian over $K$, $S(G^\sharp) = H^\sharp  = G_0^\sharp + A_G^\sharp$, where $G_0 <G $ is the unique maximal  semiabelian  variety  isogenous to one over the constants $k$, and $A_G$ is the unique maximal  abelian subvariety $A_G <  G$ with $k$-trace zero and $H = H_0 + A_G$. Furthermore, $G_0$ and $A_G$ have finite intersection and $G_0^\sharp$ and $A_G^\sharp $ are orthogonal. 
\end{corollary}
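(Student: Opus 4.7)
The plan is to deduce this corollary directly from Proposition \ref{decomposition}, which has already done almost all the work; the corollary is essentially a repackaging, grouping the summands so that one isolates the ``constants'' piece and the ``traceless abelian'' piece separately.

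First I would set $G_0 := H_0$ and $A_G := H_1 + \ldots + H_n$ in the notation of Proposition \ref{decomposition}. The proposition directly states that $H_0$ is the unique maximal semiabelian subvariety of $G$ isogenous to one defined over $k$, and that $A_G = H_1+\ldots+H_n$ is the unique maximal abelian subvariety of $G$ with $k$-trace zero, so the identifications are immediate. Since $S(G^\sharp)= H^\sharp = H_0^\sharp+H_1^\sharp+\ldots+H_n^\sharp$ by the proposition, and using Fact \ref{FactGsharpbasic}(iv) to write $A_G^\sharp = H_1^\sharp + \ldots + H_n^\sharp$, we obtain $S(G^\sharp) = G_0^\sharp + A_G^\sharp$ with $H = G_0 + A_G$.

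For the finite intersection assertion, I would argue algebraically: let $C$ denote the connected component of $G_0 \cap A_G$. Then $C$ is an abelian subvariety of $A_G$, so $C$ has $k$-trace zero. On the other hand $C$ sits inside $G_0$, which by definition is isogenous to a semiabelian variety $J$ over $k$; any abelian subvariety of $G_0$ is thus isogenous to an abelian subvariety of $J$, hence to an abelian variety defined over $k$. An abelian variety that is simultaneously of $k$-trace zero and isogenous to one over $k$ must be trivial, so $C = 0$ and $G_0 \cap A_G$ is finite. (Alternatively one can appeal directly to Proposition \ref{decomposition}, which already states that the $H_j$ have pairwise finite intersection.)

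Finally, the orthogonality of $G_0^\sharp$ and $A_G^\sharp$ follows from the orthogonality data provided by Proposition \ref{decomposition}: $G_0^\sharp = B_{Q_0}$ is almost minimal with respect to $Q_0$, while $A_G^\sharp = B_{Q_1}+\ldots+B_{Q_n}$ is almost pluri-minimal with respect to $Q_1,\ldots,Q_n$, and $Q_0$ is orthogonal to each $Q_j$ for $j \geq 1$. Since non-orthogonality is preserved under adding almost pluri-minimal pieces along pairwise orthogonal types, $G_0^\sharp$ is orthogonal to $A_G^\sharp$. There is no real obstacle here; the only thing to be careful about is stating the uniqueness/maximality properties exactly so that the identifications of $G_0$ and $A_G$ with $H_0$ and $H_1+\ldots+H_n$ are unambiguous.
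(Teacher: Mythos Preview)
Your proposal is correct and matches the paper's approach: the corollary is stated without proof in the paper, being merely a repackaging of Proposition \ref{decomposition}, and you have spelled out exactly that reduction (with Fact \ref{FactGsharpbasic}(iv) supplying $A_G^\sharp = H_1^\sharp + \ldots + H_n^\sharp$). Your finite-intersection and orthogonality arguments are more detailed than anything the paper provides, but they are sound and in the intended spirit.
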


\subsection{The algebraic socle}\label{algebraicsocle}

We introduce here a possibly new algebraic-geometric notion,  building on and motivated by our analysis of the model-theoretic socle of $G^{\sharp}$ in the previous section.  So given a pair $K_{0} < K_{1}$ of algebraically closed fields, and a semiabelian variety $G$ over $K_{1}$, we will define below the {\em $K_{0}$-algebraic socle of $G$}, a semiabelian subvariety of $G$.  In Section 4 we will show how (function field) Mordell-Lang for $G$ reduces to (function field) Mordell-Lang for its $K_{0}$-algebraic socle, and then use this to reduce further to (function field) Mordell-Lang for abelian varieties. Both reductions use  model theory and the weak socle theorem. 

\medskip

\begin{definition} Let $K_0<K_1$ be  two algebraically closed fields in any characteristic and $G$ be a semiabelian variety over $K_1$. We   define the {\em algebraic $K_0$-socle} of $G$ as follows:
$$ S_{K_0}(G) := G_0+ A_G$$
where $G_0$ is the maximal  closed connected algebraic subgroup of $G$ which is isogenous to  $H \times_{K_0} K_1$, for some $H$ defined 
over $K_0$ (note that in characteristic zero, one can replace isogenous by isomorphic, but not in characteristic $p$); $A_G$ is the maximal  abelian subvariety of $G$ with $K_0$-trace zero. \end{definition}

The following lemma should be clear but we give the proof for completeness. 

\begin{lemma} \label{descending}
  (i) $G_0$ and $A_G$ are well defined, and if $G$ is an abelian variety, then $S_{K_0} (G) = G$.\\
  (ii)  $G_0$ and $A_G$ have finite intersection.\\
  (iii) If $G = G_1+G_2$, then $S_{K_0} (G) = S_{K_0} (G_1)+S_{K_0} (G_2)$.\\
(iv)  If $G$ is defined over $K_1$, $K_0 < K_1< L_1$, $K_0 < L_0 < L_1$ algebraically closed fields such that $L_0$ and $K_1$ are linearly disjoint over $K_0$, then  $(S_{K_0} (G))\times_{K_1} L_1 = S_{L_0}(G\times_{K_1} L_1)$.\\
(v) If  $G$ is defined over some $K$, $K_0 < K$, $K$ separably closed, then $G_0$, $A_G$ and $S_{K_0}(G)$ are  also defined over  $K$. \end{lemma}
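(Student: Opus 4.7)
The plan is to establish each of (i)--(v) in turn, with most parts reducing to the standard behaviour of isogenies and the trace under sums, base change, and Galois descent. All of $G_0$, $A_G$, $S_{K_0}(G)$ are defined by a maximality property, so the main tool throughout is that inside a fixed semiabelian variety there is a dimension bound on the family of candidate subgroups, together with the fact that the collection is closed under sum.

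For (i), I first show that the sum of two closed connected algebraic subgroups of $G$ each isogenous to something defined over $K_0$ is again isogenous to something over $K_0$ (this is immediate: if $H_i \to D_{i,K_1}$ are isogenies, then $H_1+H_2$ is a quotient of $H_1\times H_2$ by a finite group, and the isogeny $H_1\times H_2 \to (D_1\times D_2)_{K_1}$ passes to the quotient using that the kernel is finite). Dimension then forces a unique maximal $G_0$. The same argument gives $A_G$: the sum of two abelian subvarieties with $K_0$-trace zero has $K_0$-trace zero, because the trace is left-exact in the appropriate sense, and dimension bounds give the maximum. If $G$ is abelian, applying Poincar\'e reducibility to the $K_0$-trace sequence shows $G$ is isogenous to $(\mathrm{Tr}_{K_1/K_0}(G))_{K_1}\times B$ with $B$ of $K_0$-trace zero, so $G = G_0+A_G$. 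For (ii), the intersection $G_0 \cap A_G$, up to its connected component, is an abelian subvariety of $G_0$, hence isogenous to some abelian variety over $K_0$; but it is also contained in $A_G$, which by definition has no nontrivial such subvariety. So $G_0\cap A_G$ is finite.

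For (iii), the inclusion $\supseteq$ is immediate from maximality, since each $S_{K_0}(G_i)$ is a closed connected subgroup of $G_1+G_2$ of the required type. For the reverse inclusion, I would argue separately on the two pieces: given a subgroup $H \subseteq G_1+G_2$ isogenous to $D_{K_1}$ with $D$ over $K_0$, the image of $H$ in $(G_1+G_2)/G_1 \cong G_2/(G_1\cap G_2)$ is again isogenous to something over $K_0$, so lifts into $(G_2)_0$ modulo the finite $G_1\cap G_2$; after an isogeny correction this puts $H$ in $(G_1)_0+(G_2)_0$. The analogous argument handles $A_G$, using that a quotient of a trace-zero abelian variety by a finite group remains trace-zero.

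For (iv), the key point is that under a linearly disjoint extension $K_1\subset L_1$ with $L_0$ algebraically closed, $K_0 < L_0$, linearly disjoint from $K_1$ over $K_0$, both properties defining $G_0$ and $A_G$ are preserved and reflected: an isogeny $H\to D_{K_1}$ with $D$ over $K_0$ extends to an isogeny $H_{L_1}\to (D_{L_0})_{L_1}$, giving $\supseteq$; for $\subseteq$ one uses linear disjointness to descend a subgroup of $G_{L_1}$ isogenous to something over $L_0$ back to one over $K_1$ isogenous to something over $K_0$, and similarly for the trace-zero condition via the standard compatibility of $K_1/K_0$-trace with linearly disjoint base change. Finally for (v), I invoke the uniqueness of $G_0$ and $A_G$ together with Galois descent: any automorphism of the separable closure of $K$ fixing $K$ sends $G_0$ to another closed connected subgroup of $G$ isogenous to something over $K_0$, of the same dimension, hence equal to $G_0$ by uniqueness; the same holds for $A_G$. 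Since $K$ is separably closed, this Galois invariance is enough to conclude $G_0$, $A_G$, and therefore $S_{K_0}(G)$, are defined over $K$. The main technical subtlety I anticipate is in (iii), where one has to manipulate isogenies modulo the finite intersection $G_1\cap G_2$ without losing the ``defined over $K_0$'' structure, and in (iv) where care is needed in characteristic $p$ to work up to isogeny rather than isomorphism.
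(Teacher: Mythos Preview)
Your outline is broadly correct and close in spirit to the paper's proof, but there are a few places where your argument slips and where the paper proceeds differently.

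In (i), your claim that ``$H_1+H_2$ is a quotient of $H_1\times H_2$ by a finite group'' is not right: the kernel of the sum map is the antidiagonal of $H_1\cap H_2$, which can be positive-dimensional. The fix is to use rigidity of semiabelian varieties: the (connected component of the) kernel maps under the isogeny $H_1\times H_2\to (D_1\times D_2)_{K_1}$ to a closed connected subgroup of $(D_1\times D_2)_{K_1}$, which is automatically the base change of a subgroup defined over $K_0$; hence $H_1+H_2$ is isogenous to a quotient of $D_1\times D_2$ by something over $K_0$. The paper simply asserts closure under sum and then, for the abelian case, decomposes $G$ into simples (each either isogenous to something over $K_0$ or of $K_0$-trace zero), rather than invoking the trace exact sequence.

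In (iii) you explicitly assume $G_1\cap G_2$ is finite, but the statement makes no such hypothesis, and the paper in fact applies (iii) later (see the remark after Proposition~\ref{summary}) to $G_i=f^{-1}(A_i)$, whose pairwise intersections contain the torus $T$. Your projection argument can be repaired along the same lines as above (rigidity again lets you control the quotient by $G_1\cap G_2$), but as written it is a genuine gap. The paper's own proof here is extremely terse (``it is easy to check''), so you are not missing a key idea, only the correct handling of a possibly infinite intersection.

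For (iv) the paper's primary argument is different from yours: it observes that linear disjointness of $L_0$ and $K_1$ over $K_0$ is exactly the condition that the pair $(K_0,K_1)$ is an elementary substructure of $(L_0,L_1)$ in the theory of pairs of algebraically closed fields, and then expresses ``$H$ is isogenous to something over the small field'' as a first-order property that transfers down. Your purely algebraic route via descent and Conrad's trace-compatibility theorem is also mentioned by the paper as an alternative (with the caveat that in characteristic $p$ one cannot pass from isogeny to isomorphism), so your approach is legitimate but the paper's model-theoretic shortcut is worth knowing.

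For (v) the paper gives a one-line argument that is more direct than your Galois descent: rigidity of semiabelian varieties implies that every closed connected subgroup of $G$ is already defined over any separably closed field of definition of $G$. Your uniqueness-plus-Galois-invariance argument is correct, but it is doing more work than necessary.
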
 

\begin{proof}
(i) It suffices to check that if $H_1, H_2 <G $ are closed connected subgroups isogenous to groups defined over $K_0$, then so is $H_1 + H_2$, which is itself closed and connected,  and that if $A_1, A_2$ are two abelian varieties in $G$ with $K_0$-trace zero, then the abelian variety $A_1+A_2$ also has $K_0$-trace zero.

If $G$ is itself an abelian variety, then $G$ is a finite sum of simple abelian varieties, $G_i$. Each $G_i$ is either isogenous to some $H$ defined over $K_0$, or has $K_0$-trace zero.

(ii) is clear: if $G_0 \cap A_G$ was infinite, then its connected  component would give $A_G $ a non zero $K_0$-trace.

(iii) Let $S_{K_0} (G_1) = H_1 + A_1$ and $S_{K_0} (G_2)= H_2 + A_2$. Then it is easy  to check that $H_1+H_2$ is the maximal connected subgroup of $G_1+G_2$ which is isogenous to some $H$ over $K_0$ and that $A_1+A_2$ is the maximal  abelian variety in
$G_1+G_2$ with $K_0$-trace zero. 

(iv) It suffices to show that if $H <G\times_{K_1}  L_1$ and there is some $ D $ over $L_0$ and an isogeny from $D  \times_{L_0} L_1$ onto $H$, then there is some $D_0$ over $K_0 $  and an isogeny  $f$  from $D_0 \times_{K_0} L_1$ onto $H$. Note that $H$ itself is in fact defined  over $K_1$ (a connected closed subgroup of $G$), hence it is of the form $H'\times_{K_1} L_1$, for $H' $ over $K_1$. 

There is a rather direct proof  using model theory:  the linear disjointness condition means exactly that, in the language of pairs of algebraically closed fields, the pair $(K_0 , K_1)$ is an elementary  substructure of  the pair $(L_0,L_1)$ (see \cite{Robinson} for model-theory of pairs of algebraically closed fields). One can then argue as in \cite{Hrushovski} or \cite{bouscaren}, expressing the existence of an isogeny between $H$ and a commutative algebraic group over the small model of the pair by a first order formula in the big pair, formula  which by elementary substructure will also be true in the small pair.

For purely algebraic proofs, in the  case of characteristic $0$, one can suppose that the isogeny is in fact  an isomorphism and the conclusion follows classically using linear disjointness.

In characteristic $p$ one cannot replace $f$ by an isomorphism. For the case of abelian varieties, the result  follows for example from more general results about the behaviour of the Trace under field extensions, shown for example in \cite{conrad}(Theorem 6.8).   

(v) This follows directly from the fact that if K is separably closed and $G$ is defined over $K$, then every connected closed subgroup of $G$ is also defined over $K$. 
\end{proof}


\bigskip 

It follows from what  we saw in section \ref{socleGsharp}  that, if $K$ is differentially closed or separably closed (sufficiently saturated), if $k$ denotes the field of constants of $K$, and if $G$ is semiabelian over $K$, then: 

\begin{proposition} \label{lineardisjointness}1. $S(G^\sharp) = (S_{k}(G))^\sharp = {G_0}^\sharp + {A_G}^\sharp$. \\
Passing to Zariski closures, $\overline{S(G^\sharp) } = S_{k}(G)$.\\
2. If $K'\preceq K$ are two differentially closed fields in characteristic $0$ ( $\preceq$ here denotes elementary  extension in the language of differential fields)   or separably closed fields of finite imperfection degree in characteristic $p$ ($\preceq$ then refers to  a suitable language for separably closed fields, considered either as Hasse differential fields or fields with $\lambda$-functions),   if $k',k$ are their respective fields of constants,  and if $G$ is defined over $K'$, 
then   $S_{k}(G\times_{K'} K) = S_{k'} (G)\times_{K'} K$.
\end{proposition}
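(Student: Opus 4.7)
The plan is to extract Part 1 as a direct consequence of the structural results already established in Section 3, and to prove Part 2 by invoking Lemma~\ref{descending}(iv), once linear disjointness is verified.

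For Part 1, Corollary~\ref{socledescribe} already gives $S(G^\sharp) = G_0^\sharp + A_G^\sharp$ where $G_0$ is the unique maximal semiabelian subvariety of $G$ isogenous to one defined over $k$, and $A_G$ is the unique maximal abelian subvariety of $G$ with $k$-trace zero. These are exactly the subvarieties whose sum defines $S_k(G)$ in the definition of the algebraic socle, so $S_k(G) = G_0 + A_G$. Applying Fact~\ref{FactGsharpbasic}(iv), which tells us that the $\sharp$-functor distributes over finite sums of semiabelian subvarieties, one obtains $(S_k(G))^\sharp = G_0^\sharp + A_G^\sharp = S(G^\sharp)$. For the Zariski closure identity, since $S_k(G)$ is a semiabelian variety, Fact~\ref{FactGsharpbasic}(i) says that $(S_k(G))^\sharp$ is Zariski-dense in $S_k(G)$, yielding $\overline{S(G^\sharp)} = S_k(G)$.

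For Part 2, I would apply Lemma~\ref{descending}(iv) with $K_0 = k'$, $K_1 = K'$, $L_0 = k$, $L_1 = K$ (or $L_1 = K^{alg}$ in the separably closed case; see below). The input required is linear disjointness of $k$ and $K'$ over $k'$. I would prove this via the classical constants-argument: if $c_1,\dots,c_n\in k$ are $k'$-linearly independent but admit a nontrivial $K'$-linear relation $\sum a_i c_i=0$, take such a relation of minimal length with $a_1=1$. Differentiating inside $K$ using $\partial$ (in characteristic $0$) or using the iterative Hasse derivations / $\lambda$-functions (in characteristic $p$) gives $\sum_{i\geq 2}\partial(a_i)c_i=0$, and minimality forces $\partial(a_i)=0$, so each $a_i\in k'$, contradicting the assumed independence. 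Hence $k\otimes_{k'}K'$ injects into $K$, which is the linear disjointness needed to apply the lemma.

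In characteristic $0$, $K$ is algebraically closed so Lemma~\ref{descending}(iv) applies directly, giving $S_{k'}(G)\times_{K'}K = S_k(G\times_{K'}K)$. In characteristic $p$, $K$ is only separably closed, so I would first extend scalars to $K^{alg}$: apply Lemma~\ref{descending}(iv) over $K^{alg}$ to obtain $S_{k'}(G)\times_{K'}K^{alg} = S_k(G\times_{K'}K^{alg})$, and then use Lemma~\ref{descending}(v) to observe that both $S_{k'}(G)\times_{K'}K$ and $S_k(G\times_{K'}K)$ are already defined over the separably closed fields $K'$ and $K$ respectively, so the equality descends from $K^{alg}$ to $K$.

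The main subtlety is the linear disjointness step. This is a standard fact, but it does require the constants-derivative computation above (equivalently, the fact that the field of constants is stably embedded as a pure ACF in each of the ambient theories); I view verifying this cleanly in the separably closed setting — where one must be careful about the interplay between $K^{p^\infty}$, the $\lambda$-functions, and elementary extension — as the only genuinely technical point.
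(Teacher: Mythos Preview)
Your proof is correct and follows the paper's approach: Part~1 is read off from Corollary~\ref{socledescribe} together with Fact~\ref{FactGsharpbasic}, and Part~2 reduces to Lemma~\ref{descending} plus the linear disjointness of $k$ and $K'$ over $k'$, which you verify explicitly where the paper simply asserts it. One small technical slip in characteristic $p$: Lemma~\ref{descending}(iv) requires \emph{all} four fields to be algebraically closed, so you must also pass from $K'$ to $(K')^{alg}$, not only from $K$ to $K^{alg}$; the linear disjointness extends to $(K')^{alg}=(K')^{p^{-\infty}}$ by applying a high enough power of Frobenius to any relation, and then Lemma~\ref{descending}(v) lets you descend back to $K'$ and $K$ as you outline.
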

\begin{proof}  1. is clear. \\
 2. follows from  \ref{descending} and the linear disjointness of $k$ and $K'$ over $k'$. If $S_k(G\times_{K'} K) = G_0 + A_G$, as above, note that both $G_0$ and $A_G$ are defined over $K'$, so of the form $G_0 = (G'_0)\times_{K'} K$ and  $A_G= A'\times_{K'} K$. Then $A_G$  is also the maximal  abelian  variety in $G\times_{K'} K$ with $k'$-trace $0$, $A'$ is the maximal abelian variety in $G$ with $k'$-trace zero,  $G_0$ is also the maximal semiabelian variety in  $G\times_{K'} K$   isogenous to one  defined over $k'$ and $G'_0$ is the maximal semiabelian variety in $G$ isogenous to one over $k'$.   \end{proof}

\section{Mordell-Lang}\label{ML}

We first recall the general  statement of function field Mordell-Lang for semiabelian varieties (for more on the conjecture and equivalent statements see for example \cite{bouscaren} and \cite{Hindry2}):

\medskip

\noindent {\bf Statement of Function field Mordell Lang for semiabelian varieties, all characteristics}: Let $K_0 <K_1$ be two algebraically closed fields, 
$G$ be a semiabelian variety over $K_1$, $X$ an irreducible subvariety of $G$  over $K_1$, and $\Gamma \subset G(K_1)$, a finite rank subgroup. Suppose that $\Gamma \cap X$ is Zariski dense in $X$ and that the stabilizer of $X$ in $G$ is finite. Then there is a semiabelian subvariety $H_0$ of $G$,  a semiabelian variety, $S_0$ over $K_0$,  an irreducible subvariety $X_0$ of $S_0$ defined over $K_0$ and a bijective morphism $h$ from $S_0$ onto $H_0$ such that $h(X_0) = a+X$ for some $a\in G(K_1)$. 

\smallskip 

\noindent{\em Remark}: Note that equivalently, the conclusion above can be stated as 
there is a semiabelian subvariety $H_0$ of $G$,  a semiabelian variety  $S_0$ over $K_0$,  an irreducible subvariety $X_0$ of $S_0$ defined over $K_0$ and an isogeny $f$ from $H_0$ onto $S_0$ such that $X= a+f^{-1} (X_0) $  for some $a\in G(K_1)$.\\

\noindent Note also that if $G$ is an abelian variety over $K_1$ with $K_0$-trace zero, then the conclusion is that $X = \{a\}$ for some $a \in G(K_1)$.

\smallskip 
\noindent Recall that we say that a subgroup $\Gamma$  of $G(K_1)$ has ``finite rank'' if:
in characteristic zero, $\Gamma \otimes  \mathbb Q $ has finite dimension (equivalently $\Gamma$ is contained in the divisible hull of some finitely generated group $\Gamma_0\subset G(K_1)$), and  in characteristic $p$, $\Gamma \otimes  {\mathbb Z}_{(p)} $
has finite rank as a module  (equivalently  $\Gamma$ is contained in the $p'$-divisible hull of some finitely generated group $\Gamma_0 \subset G(K_1)$).  Note that, in characteristic $p$,  this is a restrictive  notion of ``finite rank''. It is still open whether the same statement holds, in characteristic $p$,  with the more general assumption that
$\Gamma \otimes  {\mathbb Q}$ has finite dimension.

\subsection{Recalling the model-theoretic setting}

We recall the reduction to the differential model-theoretical setting, as in Hrushovski's original proof in \cite{Hrushovski} (see also \cite{bouscaren} or \cite{BBP2}):

-- In characteristic $0$, we can replace the pair $K_0<K_1$ by a pair $k<K$ where $(K,\delta)$ is a differentially closed field (a model of the theory $DCF_0$) and $k$ is the field of constants of $K$, $K_0 <k$,  $K_1 < K$ and $K_1$ and $k$ are linearly disjoint over $K_0$. Then there exists a definable (in the sense of $DCF_0$) connected group $H$, with finite Morley rank, such that $H$ contains both $\Gamma$ and $G^\sharp$. It follows that $X \cap H$ is Zariski dense in $X$. By Proposition \ref{char.0}, the (model theoretic) socle of the group $H$ is equal to the socle of $G^\sharp$. 

-- In characteristic $p$, we can replace the pair $K_0<K_1$ by a pair $k<K$, where $K $ is an $\omega_1$-saturated separably  closed field of finite degree of imperfection (a model of the theory $SCF_{p,e}$, with $e$ finite and non-zero), and $k = \Kpinf := \bigcap_{n} K^{p^n} $ (we also call $k$, which is the biggest algebraically closed subfield of $K$,  the field of constants of $K$).  We have that $K_0 < k$  and $K_1$ and $k$ are linearly disjoint over $K_0$, $G$ is defined over $K$, and $\Gamma \subset G(K)$. Then we can replace $\Gamma$ by the connected type-definable group $G^\sharp = p^\infty G(K) := \bigcap_{n} p^n G(K)$, which has finite $U$-rank, and is such that for some $a\in G(K)$, $(a+X) \cap G^\sharp$  is Zariski dense in $a+X$.  Without loss of generality, by translating, we can suppose that  $X \cap G^\sharp$  is Zariski dense in $H$ and, by changing $\Gamma$ to another finite rank group, that it is still true that $X\cap\Gamma$ is Zariski dense  in $X$.

\begin{claim} \label{densityoftype}
(i) In characteristic $0$: There is a complete stationary type $q_X$ in $X \cap H$ such that $ q_X(K)$ is Zariski dense in
  $X$ and $Stab (q_X)$ in $H$ is finite.  \\
(ii) In characteristic $p$: There is a complete stationary type $q_X$ in $X \cap G^\sharp$  such that $ q_X(K)$ is Zariski dense in
  $X$ and $Stab (q_X)$ in $G^\sharp$ is finite.  \end{claim}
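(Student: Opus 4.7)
The plan is to build $q_X$ as a complete type over a small elementary submodel $M_0$ which captures \emph{both} the model-theoretic condition of lying in $H$ (respectively $G^\sharp$) and the algebraic-geometric condition of being generic in $X$; stationarity will then come for free because $M_0$ is a model, and finiteness of the model-theoretic stabilizer will follow from the assumption that the algebraic stabilizer of $X$ in $G$ is finite.

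First, choose a small $M_0 \preceq K$ containing all parameters for $G$, $X$, $H$, $G^\sharp$ and the group operation. In characteristic $0$ consider the partial type
\[
\pi(x) := \{x \in X\} \cup \{x \in H\} \cup \{x \notin Z : Z \text{ proper Zariski-closed subvariety of } X \text{ defined over } M_0\},
\]
and in characteristic $p$ replace $x \in H$ by the type-definable condition $x \in G^\sharp$. A finite union of proper closed subvarieties of the irreducible $X$ is again a proper closed subvariety, and $X \cap H$ (resp.\ $X \cap G^\sharp$) is Zariski dense in $X$, so every finite subtype of $\pi$ is satisfied. Extend $\pi$ to a complete type $q_X$ over $M_0$; since $M_0$ is a model, $q_X$ is stationary. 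By saturation of $K$, the set $q_X(K)$ is nonempty; its Zariski closure is $\mathrm{Aut}(K/M_0)$-invariant, hence defined over $M_0$, and by construction cannot be contained in any proper closed subvariety of $X$ over $M_0$, so it equals $X$. Thus $q_X(K)$ is Zariski dense in $X$.

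For the stabilizer, take $g \in \mathrm{Stab}(q_X)$ inside $H$ (resp.\ $G^\sharp$) and let $a$ realize the unique non-forking extension $q_X|_{M_0 g}$; since $g$ stabilizes $q_X$, also $g + a \models q_X$, so $a \in X \cap (X - g)$. Non-forking in $\mathrm{DCF}_0$ (or $\mathrm{SCF}_{p,e}$) refines algebraic independence over $M_0$ because the $\mathrm{ACF}$-reduct is interpretable, so $a$ is algebraically generic in $X$ over $M_0(g)$. Consequently the closed $M_0(g)$-subvariety $X \cap (X - g)$ must coincide with $X$, forcing $X = X - g$ by irreducibility and equality of dimensions, so $g$ lies in the algebraic stabilizer of $X$ in $G$. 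Since this stabilizer is finite by hypothesis, so is $\mathrm{Stab}(q_X)$. The one point requiring care is the transition between model-theoretic non-forking and algebraic-geometric genericity; in the separably closed setting this involves the role of the $\lambda$-functions in definable closure, but non-forking still refines algebraic independence in the $\mathrm{ACF}$-reduct, which is all that the argument uses.
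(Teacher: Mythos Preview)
Your proof is correct and takes a genuinely different route from the paper's. The paper argues topologically: in characteristic $0$ it uses Noetherianity of the Kolchin topology to split $X\cap H$ into finitely many irreducible components, selects one, $Z_1$, that is Zariski dense in $X$, and takes $q_X$ to be the topological generic of $Z_1$; in characteristic $p$ the full $\lambda$-topology is not Noetherian, so the paper runs a K\"onig-type argument on the tree of $\lambda_n$-irreducible components (using $\omega_1$-saturation) to produce such a $Z_1$. Zariski density of $q_X(K)$ then comes for free because its Kolchin/$\lambda$-closure is $Z_1$ and the Zariski closure contains the Kolchin/$\lambda$-closure. You bypass these fine topologies entirely: you build $q_X$ as any completion over a small model $M_0$ of the partial type ``in $X\cap H$ (resp.\ $G^\sharp$) and ACF-generic in $X$ over $M_0$'', and deduce Zariski density from $\mathrm{Aut}(K/M_0)$-invariance of the Zariski closure of $q_X(K)$. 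This is more uniform across the two characteristics and avoids the component-tree argument; what the paper's route buys is an explicit identification of $q_X$ as the generic of a concrete Kolchin/$\lambda$-irreducible set, which fits the geometric picture used elsewhere. The stabilizer arguments are essentially the same in both proofs: any $g$ stabilizing $q_X$ is shown to satisfy $g+X=X$. One small point worth making explicit in your write-up: in characteristic $p$, $M_0$ is only separably (not algebraically) closed, so the step ``$\mathrm{Aut}(K/M_0)$-invariant Zariski-closed set is defined over $M_0$'' is not literally Galois descent in ACF; it goes through because the Zariski closure is coded by a tuple of $K$-points (e.g.\ as a sequence of Grassmannian points for the graded pieces of its ideal), and an $\mathrm{Aut}(K/M_0)$-fixed tuple of $K$ over the model $M_0$ lies in $M_0$ by saturation.
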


\begin{proof}

\noindent In  characteristic $0$: Consider the set $Z := X \cap H$ which is closed in the $\delta$-topology in $G(K)$. By Noetherianity of the topology, $Z$ is a union of finitely many irreducible components, so one of them,  $Z_1$ is also Zariski dense in $X$. Let $q_X$ be the complete $DCF_0$-type which is the topological generic of $Z_1$. Then $Y := q_X(K)$ is also Zariski dense in $X$: indeed $\overline Y$, the Zariski closure
 of $Y$,  contains the $\delta$-closure of $Y$, which is equal to $Z_1$. So
 $\overline{Y} = \overline{Z_1} = X$. \\
Now suppose $g\in G$ stabilizes the type $q_X$, that is,  $(g+q_X(K)) \cap q_X(K)$ contains ${q_X}_{|g} (K)$. But the $\delta$-closure of ${q_X}_{|g} (K)$ is still
equal to $Z_1$, so $(g+q_X(K)) \cap q_X(K)$  is also Zariski dense in $X$  and it follows that 
$g+X= X$. So the stabilizer of the type $q_X$ must be finite. \\
\noindent In  characteristic $p$: $X \cap G^\sharp = \bigcap Y_i$, where the $Y_i$ are a
  decreasing sequence of $\lambda_i $-closed sets.  Each of the
  $Y_i$ is closed in the $ \lambda_i$-topology, which is
  Noetherian, so has a finite number of irreducible components  
for this topology. Every component of $Y_{i+1} $ is contained in a
  component of $Y_i$. So we obtain a tree of $\lambda$-closed irreducible
  sets of finite type, branching finitely. On a given branch we obtain an irreducible
  component of $X \cap G^\sharp$.
As the tree branches finitely, at each level one of the  closed sets
must be Zariski dense in $X$. Taking the intersection, by $\omega_1$-saturation of $K$, we obtain an
irreducible component $Z_1$  of  $ X\cap G^\sharp$ which is Zariski dense in $X$. Then, as in the characteristic $0$ case,  we take $q_X$ to be the complete $SCF_{p,1}$-type which is the topological generic of $Z_1$.  Then
$ q_X(K)$ is also Zariski dense in $X$. The same arguments as above show that the stabilizer of $q_X$ must also be finite.  \end{proof}

\subsection{Reduction to the algebraic socle}

We have, in the previous section, defined what we called, for any pair of algebraically closed fields, $K_0<K_1$, and any semiabelian $G$ over $K_1$, the {\em $K_0$-algebraic  socle of $G$}. 

\medskip 

\noindent The following reduction follows from  the ``weak socle theorem'':

\begin{proposition}\label{reducingtoalgebraicsocle} In order to prove function field Mordell-Lang for all semiabelian  varieties, it suffices to prove it for semiabelian varieties of the form $G_0+A$, where $G_0 $ is a semiabelian variety isogenous to one over $K_0$ and $A$ is an abelian  variety of $K_0$-trace zero.\end{proposition}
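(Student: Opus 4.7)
The plan is to apply the weak socle theorem (Theorem \ref{weaksocle}) in the model-theoretic setting of Section 4.1, which after a preliminary translation places $X$ inside the algebraic socle $S_{K_0}(G) = G_0 + A_G$ of $G$, a semiabelian variety of exactly the form allowed in the proposition; the assumed form of Mordell-Lang then applies directly.

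First I would translate $X$ by $-b$ for some $b \in \Gamma \cap X$ (which is nonempty since $\Gamma \cap X$ is Zariski-dense in $X$), so that $0 \in X$ while $X$ remains defined over $K_1$ and $\Gamma \cap X$ still meets $X$ Zariski-densely. I would then pass to the setting of Section 4.1: a differentially closed or $\aleph_1$-saturated separably closed $K$ with constants $k$, $K_0 \subseteq k$ and $K_1$ linearly disjoint from $k$ over $K_0$, and work inside $G^\sharp$ (in characteristic $p$) or the finite-Morley-rank definable group $H \supseteq G^\sharp$ from Hrushovski's reduction (in characteristic $0$, noting $S(H) = S(G^\sharp)$ by Corollary \ref{char.0}(i)(b)). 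Claim \ref{densityoftype} then supplies a complete stationary type $q_X$ on $X$ inside this group, Zariski-dense in $X$, with finite stabilizer.

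Now $S(G^\sharp)$ is rigid by Corollary \ref{char.0}(ii), so Theorem \ref{weaksocle} applies and produces an element $a$ such that every realization of $q_X$ lies in the coset $a + S(G^\sharp)$. Taking Zariski closures, $X \subseteq a + \overline{S(G^\sharp)} = a + S_k(G) = a + S_{K_0}(G) \times_{K_1} K$ by Proposition \ref{lineardisjointness}, and since $0 \in X$ this forces $a \in S_k(G)$, hence $X \subseteq S_{K_0}(G) \times_{K_1} K$. As both $X$ and $S_{K_0}(G)$ are defined over $K_1$ (using Lemma \ref{descending}), $X$ is in fact an irreducible subvariety of $S_{K_0}(G)$ over $K_1$, still with finite stabilizer, and $\Gamma' := \Gamma \cap S_{K_0}(G)(K_1)$ is a finite-rank subgroup of $S_{K_0}(G)(K_1)$ that meets $X$ Zariski-densely (it contains $\Gamma \cap X$). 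Applying the hypothesized Mordell-Lang to this data inside $S_{K_0}(G) = G_0 + A_G$ yields a semiabelian $H_0 < S_{K_0}(G) < G$, a semiabelian variety $S_0$ over $K_0$, an irreducible subvariety $X_0 \subseteq S_0$ over $K_0$, a bijective morphism $h : S_0 \to H_0$, and some $c \in G(K_1)$ with $h(X_0) = c + X$. Translating back by $+b$ gives the required conclusion for the original $X$.

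The step I expect to require the most care is the joint use of rigidity and the finite-stabilizer hypothesis in Theorem \ref{weaksocle}, together with the identification $S(H) = S(G^\sharp)$ in characteristic $0$ needed to transfer the conclusion of the weak socle theorem from the ambient finite-Morley-rank group down to the algebraic-geometric socle: these are exactly what is packaged in Corollary \ref{char.0}. The final descent of the containment $X \subseteq S_k(G)$ to $X \subseteq S_{K_0}(G)$ is routine given that $S_{K_0}(G)$ is already defined over $K_1$.
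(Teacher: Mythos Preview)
Your proof is correct and follows essentially the same route as the paper: apply the weak socle theorem to the type $q_X$ from Claim \ref{densityoftype} to place a translate of $X$ inside $S_{K_0}(G)$ (this is exactly the content of the paper's Proposition \ref{algebraicweaksocletheorem}), then transfer the Mordell-Lang hypotheses to the socle and invoke the assumed case. Your preliminary translation by $-b\in\Gamma\cap X$ so that $0\in X$ is a small but pleasant simplification over the paper's argument: the paper keeps the translate $a+X$ and must build a new finite-rank group as the $(p')$-divisible hull of $\langle a,\Gamma_0\rangle\cap S_{K_0}(G)(K_1)$ and check density by hand, whereas your $\Gamma'=\Gamma\cap S_{K_0}(G)(K_1)$ works immediately since it already contains $\Gamma\cap X$.
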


Indeed, going back to the ``weak socle theorem'' (\ref{weaksocle}) and its application to the algebraic situation (section \ref{algebraicsocle}), we see that,
 passing to Zariski closures, what we have proved at the  algebraic level is the following:

\begin{proposition}\label{algebraicweaksocletheorem}  Let $K_0<K_1$ be any pair of algebraically closed fields, $G$ a semiabelian variety over $K_1$,  $X$   an irreducible subvariety of $G$ over $K_1$  such that the stabilizer of $X$ in $G$ is finite. Let $\Gamma$ be a subgroup of $G(K_1)$ of finite rank, if  $X\cap \Gamma$ is dense in $X$, then for some $a\in G(K_1)$ $a+X \subset S_{K_0}(G)$.\end{proposition}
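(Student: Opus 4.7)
The plan is to translate the algebraic hypotheses into the model-theoretic setting of Section~\ref{socleGsharp}, apply Theorem~\ref{weaksocle} to the stationary type furnished by Claim~\ref{densityoftype}, take Zariski closures, and then descend from the field of constants of the ambient saturated model back to $K_0$ via linear disjointness.

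First I would perform the standard reduction recalled in Section~4.1: replace the pair $K_0<K_1$ by a pair $k<K$, where $K$ is differentially closed in characteristic~$0$, or is a sufficiently saturated separably closed field of finite non-zero degree of imperfection in characteristic $p$, $K_0\subseteq k$, $K_1\hookrightarrow K$, and $k$ is linearly disjoint from $K_1$ over $K_0$. In characteristic $0$, embed $\Gamma$ in a connected definable finite Morley rank subgroup $H$ of $G(\U)$ containing $G^{\sharp}$; in characteristic $p$, after translating $X$ as recalled just before Claim~\ref{densityoftype}, put $H:=G^{\sharp}$. In either case $X\cap H$ is Zariski dense in $X$, and Claim~\ref{densityoftype} then produces a complete stationary type $q_X$ concentrated on $X\cap H$ whose realizations are Zariski dense in $X$ and whose stabilizer in $H$ is finite.

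Next, $S(H)$ coincides with $S(G^{\sharp})$ in both characteristics: tautologically in characteristic $p$, and by Corollary~\ref{char.0}(i)(b) in characteristic $0$. Moreover $S(G^{\sharp})$ is rigid by Corollary~\ref{char.0}(ii) (itself resting on Proposition~\ref{newGsharp}). Consequently the hypotheses of the weak socle theorem (Theorem~\ref{weaksocle}) are verified for $q_X$ inside $H$, producing $b\in H$ such that every realization of $q_X$ lies in $b+S(G^{\sharp})$. Taking Zariski closures and invoking Proposition~\ref{lineardisjointness}(1),
\[
X \;=\; \overline{q_X(K)} \;\subseteq\; b+\overline{S(G^{\sharp})} \;=\; b+S_k(G).
\]
Since $X\cap\Gamma$ is Zariski dense in $X$, it is nonempty; pick $x_0\in X\cap\Gamma\subseteq G(K_1)$. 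Then $b+S_k(G)=x_0+S_k(G)$, and so $-x_0+X\subseteq S_k(G)$.

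Finally, to descend: both $-x_0+X$ and $S_{K_0}(G)$ are defined over $K_1$, and by Proposition~\ref{lineardisjointness}(2) (valid because $k$ and $K_1$ are linearly disjoint over $K_0$, via Lemma~\ref{descending}(iv)) we have $S_k(G)=S_{K_0}(G)\times_{K_1}K$ as closed subvarieties of $G$. A $K_1$-subvariety contained in the base change of a $K_1$-subvariety is already contained in that $K_1$-subvariety, so $-x_0+X\subseteq S_{K_0}(G)$, yielding the statement with $a:=-x_0\in G(K_1)$. The main obstacle I anticipate is verifying the rigidity hypothesis of the weak socle theorem for the enveloping group $H$ (which in characteristic $0$ need not equal $G^{\sharp}$); this is precisely what Corollary~\ref{char.0} is designed to handle, by identifying $S(H)$ with $S(G^{\sharp})$ and transferring rigidity. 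Once that identification is in place, everything else is formal: the weak socle theorem produces the coset, and linear disjointness lets the inclusion descend to $K_1$.
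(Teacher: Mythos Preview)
Your proposal is correct and follows essentially the same route as the paper: pass to the differential/separably closed setting, invoke Claim~\ref{densityoftype} to obtain $q_X$, identify $S(H)=S(G^\sharp)$ via Corollary~\ref{char.0}, apply Theorem~\ref{weaksocle}, take Zariski closures using Proposition~\ref{lineardisjointness}(1), and descend via Lemma~\ref{descending}(iv). You are in fact more explicit than the paper on two points it leaves implicit: checking the rigidity hypothesis of Theorem~\ref{weaksocle}, and arranging the translating element to lie in $G(K_1)$ by picking a $K_1$-rational point of $X$ (the paper simply asserts ``a translate of $X$ will be contained in $S_{K_0}(G)$''). One small wrinkle: in characteristic $p$ you have already replaced $X$ by a translate $a+X$ with $a\in G(K)$ and modified $\Gamma$, so the point $x_0$ you select from the (new) $X\cap\Gamma$ need not lie in $G(K_1)$; the clean fix is to choose instead any $x_0\in X_{\mathrm{orig}}(K_1)$, which exists since $K_1$ is algebraically closed, and run the same coset argument on the original $X$.
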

\begin{proof}
We pass to the model theoretic setting described in the previous section.
 
In both characteristics, we apply the weak socle theorem (Theorem \ref{weaksocle}) to the type $q_X$ obtained from Claim \ref {densityoftype} and   to the group $H$ in characteristic zero and  to the group $G^\sharp $ in characteristic $p$.  As we know that  in characteristic zero, the socle of $H$ is equal to the socle of $G^\sharp$ (\ref{char.0}), in both cases we  conclude that a translate of the type $q_X$ is contained in the socle of $G^\sharp$, $S(G^\sharp)$.  Taking Zariski closures, it follows that a translate of $X$ is contained in the algebraic $k$-socle of 
$G$, $S_k (G)$. Now as $G$  and $X$ were  defined over the algebraically  closed field $K_1$, by Lemmas \ref{descending} and \ref{lineardisjointness}, a translate of $X$ will be contained in $S_{K_0} G$. \end{proof}

\begin{remark} {\em The condition that $\Gamma$ is a group of finite rank is used  to pass to a group of finite rank in the  sense of model theory, in order to use the model theoretic weak socle theorem.  One might ask whether a purely algebraic-geometric statement suffices, for example whether if $X$ is an irreducible subvariety of $G$ with finite stabilizer, then  $X$ is contained in a translate of $S_{K_{0}}(G)$. But it is not the case, as the following simple example shows:  Let $A$ be a simple abelian variety over $K_{1}$, of dimension $>1$ and with $K_{0}$-trace $0$.  Let $G$ be a non almost split extension of $A$ by $\Gm$ (see Section \ref{calculations}).  Then $S_{K_{0}}(G) = \Gm$.  If $X$ is a curve in $A$, and $Y$ is a curve in $G$ projecting onto $X$, then $Stab_{G}(Y)$ is finite, but $Y$ is not contained in a translate of $\Gm$.  }\end{remark}

\noindent
We can now prove Proposition \ref{reducingtoalgebraicsocle}:\\
\begin{proof} Suppose function field Mordell-Lang  is true for ``socle-like'' semiabelian varieties (i.e. as in the hypothesis of Proposition 4.2),  $K_0<K_1 $ are algebraically closed fields, and  $G$ is  any semiabelian variety over $K_1$. Let $\Gamma$ be a finite rank subgroup of $G(K_1)$, and $X$ an irreducible subvariety of $G$ over $K_1$, such that $X\cap \Gamma$ is dense in $X$ and the stabilizer of $X$ in $G$ is finite. \\
  By \ref{algebraicweaksocletheorem}, for some $a \in G(K_1)$,  $a+X \subset S_{K_0}(G)$. Let $\Gamma_0$ be the finitely generated subgroup such that $\Gamma$ is contained in the ($p'$-)divisible hull of $\Gamma_0$. Let $\Gamma_1 := \langle a , \Gamma_0\rangle  \cap S_{K_0}(G)(K_1)$. Then $\Gamma_1$ is a finitely generated subgroup, let $\Gamma '$ be the ($p'$)-divisible hull of $\Gamma_1$, then $\Gamma ' \subset S_{K_0}(G)(K_1)$. Note that $(a+\Gamma) \cap S_{K_0}(G)(K_1) \subset \Gamma '$.  We claim that $\Gamma_1 \cap (a+X)$ is dense in $a+X$: Let $O$ be any open set (over $K_1$) in $a+X$, then $O-a$ is an open set of $X$. As $\Gamma \cap X$ is dense in $X$, there is some element of  $X\cap \Gamma$ in $O-a$, $g$. Then $a+g \in (a+X) \cap (a+\Gamma) \subset S_{K_0}(G)$. One can check that $a+g \in \Gamma '$.
  So $\Gamma ' \cap (a+X)$ is dense in $a+X$, and of course the stabilizer of $a+X$ in $S_{K_0}(G)$ is finite. Hence by our assumptions, there is $H_0$ some semiabelian subvariety of $S_{K_0}(G) $, a semiabelian variety $ S_0$ over $K_0$,  an irreducible subvariety $X_0$ of $S_0$ defined over $K_0$ and an isogeny $h$ from $H_0$ to $S_0$ such that $h^{-1}(X_0) = b+a+X$ for some $b\in S_{K_0}G(K_1)$.  This also gives the result (Mordell-Lang) for $G$ itself. \end{proof}

\medskip 

\noindent{\em Remark. } This reduction is a purely algebraic result, but our proof goes through model-theory and once one has the differential setting and the weak socle theorem, can be deduced quite easily.  To our knowledge this reduction as stated is new and we do not know if it has  a direct algebraic proof. 
 
The same question arises for the reduction of Mordell-Lang for semiabelian varieties  to the case of abelian varieties of trace zero  which we present in the next section.


\subsection{Function field Mordell-Lang for abelian varieties implies function field Mordell-Lang  for semiabelian varieties}


\medskip 
The material in this section is close to Hrushovski's original proof via the weak socle theorem, but replacing the use of one-basedness by the assumption that (function field) Mordell-Lang is true for abelian varieties. More precisely, 
we  show how to derive function field Mordell-Lang for semiabelian varieties from: 

\medskip 

\noindent{\bf Mordell-Lang for trace zero  abelian varieties:}  Let $K_0 <K_1$ be two algebraically closed fields, 
$A$ an abelian variety  over $K_1$ with $K_0$-trace zero, $X$ an irreducible subvariety of $A$  over $K_1$, and $\Gamma \subset G(K_1)$, a finite rank subgroup. Suppose that $\Gamma \cap X$ is Zariski dense in $X$ and that the stabilizer of $X$ in $G$ is finite. Then $X= \{a\}$ for some $a \in A(K_1)$. 


\begin{proposition} In all characteristics, function field Mordell-Lang for trace zero abelian varieties implies function field  Mordell-Lang for all semiabelian varieties.\end{proposition}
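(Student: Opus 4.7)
The plan is to use Proposition \ref{reducingtoalgebraicsocle} to reduce to the case $G = G_0 + A$, with $G_0 \cap A$ finite, $G_0$ isogenous to some semiabelian variety $G_0^*$ defined over $K_0$, and $A$ an abelian variety with $K_0$-trace zero. Via the natural isogeny $G_0^* \times A \twoheadrightarrow G_0 + A$, whose finite kernel is compatible with both the hypothesis and conclusion of Mordell-Lang, we reduce further to the case $G = G_0^* \times A$ with $G_0^*$ defined over $K_0$. Then we pass to the model-theoretic setting of Section \ref{ML} and let $q_X$ be the stationary type with finite stabilizer and Zariski-dense set of realizations inside $X$ provided by Claim \ref{densityoftype}.

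The heart of the argument is to obtain a product decomposition of $X$ from the orthogonality of the socle components. By Corollary \ref{socledescribe}, $S(G^\sharp) = G_0^{*\sharp} \oplus A^\sharp$ and $G_0^{*\sharp} \perp A^\sharp$: every minimal type contributing to $G_0^{*\sharp}$ is non-orthogonal to the constants $C$, while every minimal type contributing to $A^\sharp$ is orthogonal to $C$ (Proposition \ref{propertiessocle}). It follows that the two projections $\pi_1(q_X)$ and $\pi_2(q_X)$ are orthogonal stationary types in the respective factors, so $q_X = \pi_1(q_X) \otimes \pi_2(q_X)$; taking Zariski closures one concludes $X = X_1 \times X_2$, where $X_i$ is the Zariski closure of $\pi_i(q_X)$-realizations in $G_i$. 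Each $X_i$ then has finite stabilizer and meets $\pi_i(\Gamma)$ densely, with $\pi_i(\Gamma)$ of finite rank.

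Applying the assumed Mordell-Lang for $K_0$-trace zero abelian varieties to $X_2 \subset A$ forces $X_2$ to be a single point $\{a_2\}$, and after translating by $-a_2$ the problem reduces to $X \subset G_0^*$ with $G_0^*$ defined over $K_0 \subset k$. In this final case the realizations of $q_X$ all lie in $G_0^{*\sharp} = G_0^*(C)$ by Fact \ref{FactGsharpbasic}(v); hence $X$, being their Zariski closure, is a subvariety of $G_0^*$ that is defined both over the algebraically closed field $C$ (it is determined by a subset of $G_0^*(C)$) and over $K_1$ (it is the Zariski closure of a $K_1$-definable set of realizations). The linear disjointness of $K_1$ and $k$ over $K_0$ gives $K_1 \cap C = K_0$, so $X$ descends to a subvariety $X_0$ over $K_0$. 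Composing with the original isogeny $G_0^* \to G_0 \hookrightarrow G$ then yields the Mordell-Lang conclusion with $S_0 = G_0^*$, $X_0$ as above, and the required bijective morphism $h : S_0 \to H_0 \subset G$ (purely inseparable in characteristic $p$, an isomorphism up to translation in characteristic zero).

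The chief obstacle is justifying the product decomposition $X = X_1 \times X_2$. The orthogonality of the minimal types inside the two socle components has to be lifted to orthogonality of the compound stationary projections $\pi_i(q_X)$, and one needs the standard stability-theoretic fact that the Zariski closure of a product of two orthogonal stationary types equals the product of the individual Zariski closures. The descent from $C$ to $K_0$ via linear disjointness, and the tracking of (possibly inseparable) isogenies in positive characteristic, are routine once the model-theoretic framework is in place.
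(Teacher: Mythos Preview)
Your approach is correct and shares the paper's core idea: reduce via Proposition~\ref{reducingtoalgebraicsocle} to a socle-like $G$, use the orthogonality $G_0^\sharp \perp A^\sharp$ to split $q_X$ into two factors, apply the assumed Mordell-Lang to the abelian factor to collapse it to a point, and then descend the remaining piece to $K_0$. The difference is packaging: you first pass by isogeny to an honest direct product $G_0^*\times A$, which lets you write $X=X_1\times X_2$ and apply Mordell-Lang directly to $X_2\subset A$; the paper stays in the sum $H_0+A$ (with finite intersection), writes $X=V+W$, and must then project via $G\to G/H_0$ to obtain a closed $W'$ inside a genuine trace-zero abelian variety $A'$ before invoking the hypothesis. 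Your product reduction buys a cleaner endgame at the cost of tracking the isogeny back at the very end; the paper avoids that bookkeeping but pays with the extra projection step and Claims about $W'$.

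Two points need tightening. In characteristic $0$, the $q_X$ supplied by Claim~\ref{densityoftype}(i) lives in $X\cap H$, not in $X\cap G^\sharp$, so your projections $\pi_i(q_X)$ need not land in $G_0^{*\sharp}$, $A^\sharp$, and the orthogonality argument does not yet apply. You must invoke the weak socle theorem once more (using $S(H)=G^\sharp$ from Corollary~\ref{char.0} and rigidity) to translate $q_X$ into $G^\sharp$; the paper does this implicitly when it writes ``$q_X$ in $X\cap G^\sharp$''. Second, the isogeny $G_0^*\to G_0$ is not automatically purely inseparable in characteristic~$p$: an arbitrary isogeny has an \'etale part. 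But its \'etale kernel is a finite set of torsion points of $G_0^*$, hence $K_0$-rational, so after quotienting you may replace $G_0^*$ by a semiabelian variety still over $K_0$ whose isogeny to $G_0$ \emph{is} purely inseparable and thus bijective, as you need. Finally, the reason $X_1$ is defined over $K_1$ is not that $q_X$ is $K_1$-definable (it is not), but simply that $X_1=\pi_1(X)$ with $X$ over $K_1$ once the product decomposition is established.
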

\begin{proof} 
First, by Proposition \ref{reducingtoalgebraicsocle}, we can suppose that $G = H_0 +A$, where $H_0$ is a semiabelian variety isogenous to one defined over $K_0$ and $A$ is an abelian variety with $K_0$-trace $0$. 

Again we pass to the model theoretic differential setting as described in the previous section, and so we have  $K$  a (sufficiently saturated) separably closed field (of non-zero finite degree of imperfection) in characteristic $p$  and a differentially closed field  in characteristic $0$;  $k$ denotes the constant field of $K$, and  $G= H_0 + A$, where $H_0$ is isogenous to a 
semiabelian variety over $k$, and $A$ has $k$-trace zero (by Lemmas \ref{descending} and \ref{lineardisjointness}). We have $\Gamma \subset G(K)$, $X$ an irreducible subvariety of $G$ with finite stabilizer in $G$, such that $X\cap G^\sharp$ is Zariski dense in $X$, and also
$\Gamma \cap X$ is dense in $X$.  As in Claim \ref{densityoftype}, there is a complete type $q_X$ in $X\cap G^\sharp $, with finite stabilizer, and which is Zariski dense in $X$, that is, 
$\overline{q_X(K)} = X$.

Remark that because of the ``socle-like'' form of $G$, 
$G^\sharp = {H_0}^\sharp + A^\sharp$  is already an almost pluri-minimal group, and so $S(G^\sharp) = G^\sharp$ (see Lemma 
\ref{lineardisjointness}), and  $H_0^\sharp$  and $A^\sharp$  are orthogonal.  By orthogonality, it follows that there are two complete orthogonal types 
$p_{H_0}$ in ${H_0}^\sharp$, and $p_A$  in $A^\sharp$ such that $q_X = p_{H_0} + p_A$. We let $V := \overline{p_{H_0}(K)} $ and $W:= \overline{p_A(K)}$.

\begin{claim} $X = {V+W} $ and the stabilizer of $W$ in $A$ is finite. \end{claim}
\begin{prclaim}  
First we show that  $V+W \subset X$. Fix any $b$ realizing the type $p_{H_0}$ and consider the set $\{c \in G : b+c  \in X\}$. This is closed in $G$, contains $p_A(K) $, hence contains 
$W= \overline{p_A(K)}$. So, for any $b$ realizing the type $p_{H_0}$, $b+W \subset X$. Now consider the set $\{b \in G : b+ W \subset X\}= \bigcap_{w \in W} X-w $. This is an intersection of closed sets, so it
is  closed, and it contains ${p_{H_0}(K)}$, so contains also $V= \overline{p_{H_0}(K)}$.  Hence $V+ W \subset X = \overline{{p_{H_0}(K)} + {p_A(K)}}$. 
Now in fact $V+W$ is closed: indeed  if $f$ is the natural map from $H_0\times A$ onto $H_0 + A_0$, as $H_0 $ and $A_0$ have finite intersection, $f$ is an isogeny of algebraic 
groups, hence it is closed, and $f(V\times W) = V + W$ is closed. \\
It follows that $X = V+W$.\\
Let $a \in A(K)$ be such that  $a+ W= W$. Then $a +X = a +V+W= V+(a+W) = V+ W = X$, 
so $a$ also stabilizes $X$.\end{prclaim}

\smallskip 

Now let $\pi : G \to G/H_0$. As $H_0 \cap A$ is finite, $\pi$ restricted to $A$, which we denote $\pi_A$,  is an isogeny. Let $A' := \pi_A(A)$, then $A'$ is also an abelian variety, and $\pi_A$  is  a closed morphism, so $W' := \pi_A (W)$ is closed in $A'$. \\

\begin{claim} The stabilizer of $W'$ in $A'$ is finite. \end{claim}
\begin{prclaim} This follows from the fact  that $\pi_A$ is an isogeny. Indeed, let
 $b \in Stab_{A'} W'$, $b = \pi_A (a)$; then $a \in Stab_G W + Ker\ \pi_A$:  
$\pi_A (a+W) = b + W' = W'$. So $a+W \subset {\pi_A}^{-1}(W') = W + Ker\  \pi_A = a_1+W \cup \ldots \cup a_n+W$. By irreducibility of $W$, for some $i$, $a+W = a_i +W$, 
so $a-a_i \in Stab_G W$, which is finite.  \end{prclaim}

\begin{claim} There is a finite rank subgroup in $A'(K)$, ${\Gamma}'$,  such that ${\Gamma}' \cap W'$ is Zariski dense in $W'$. \end{claim}
\begin{prclaim}  Let ${\Gamma}' := \pi_A (\Gamma)$. Then we claim that 
$\overline{{\Gamma}' \cap W'} = W'$. Indeed if $F$ is a closed set in $G/H_0$ such that 
$F \supset {\Gamma}'  \cap W' $, then $\pi^{-1}(F) \supset (W + H_0) \cap \Gamma  \supset X \cap \Gamma$. And as $\pi^{-1}(F)$ is closed, $\pi^{-1}(F)  \supset \overline{X \cap \Gamma } = X$.
Then $\pi(\pi^{-1}(F)) = F \supset \pi(X) = \pi(V+W)= \pi (W) = \pi_A (W)  = W'$. \end{prclaim}

\medskip

As the abelian  variety $A'$ is isogenous to $A$, it also has $k$-trace zero. By Mordell-Lang applied to $A'$, $W'$ and ${\Gamma}' $, $W' = \{a\}$.  It follows that $W$ itself is also reduced to a point, $W = \{b\}$. Hence $q_X  = p_{H_0} + b $. So the
Zariski closure of $q_X(K)$, $X$, is equal to $V +b$. Hence $X-b \subset V \subset H_0$.  As $H_0$ is isogenous  to some $S_0$ over the algebraically closed field $K_0$, let $h$ be the isogeny from $S_0\times_{K_0} K $ onto $H_0$ and let $X_0$ be  be the Zariski closure of 
$h^{-1} (p_{H_0}(K))$.  Then as required $X = h(X_0) +b$ , and as $h^{-1} (p_{H_0}(K))$ must be contained in ${S_0}^\sharp = S_0(k)$,  $X_0$ is defined over $k$. By linear disjointness of $k$ and $K_1$ over $K_0$, it is defined over $K_0$, as required. 
\end{proof}

\section{$G^\sharp$ versus $S(G^\sharp)$} 

We study the relationship between $G^{\sharp}$ and $S(G^{\sharp})$. We know from \cite{BBP1} that in the general semiabelian case, $G^{\sharp}$ and more generally the $\sharp$-functor may not be so well-behaved.  Here we point out ways in which the socle $S(G^{\sharp})$ is better behaved.

\subsection{Characteristic $p$ case, quantifier elimination for the socle of $G^\sharp$}\label{Gsharpincharp}

Let $G$ be a semiabelian variety over $K$ (sufficiently saturated) separably closed of characteristic $p>0$ and let $k:= K^{p^\infty}$ denote the field of constants.  In this section we will see that the socle of $G^\sharp$ recovers all the good properties of $A^\sharp$ when $A$ is an abelian variety, properties  that can be lost  for  semiabelian varieties: in particular  quantifier elimination and  relative Morley rank (see \cite{BBP1}, \cite{BBP2}). 

    Recall the notation from the previous sections: $S(G^\sharp) = (S_{k}(G))^\sharp = {G_0}^\sharp + {A_G}^\sharp$, where $G_0$ is the maximal  closed connected algebraic subgroup of $G$ which is isogenous to  $H \times_{k} K$, for some $H$ defined 
over $k$  ; $A_G$ is the maximal  abelian subvariety of $G$ with $k$-trace zero.\\
Furthermore, $G_0 $ and $A_G$ have finite intersection, and ${G_0}^\sharp$ and ${A_G}^\sharp$ are orthogonal.

 The first ``good'' properties of the socle of $G^\sharp$ follow easily from its definition.

\begin{proposition}\label{morleyrank}
1.  Suppose that $G$ is defined over $K_1<K$ where $K_1$ is the separable closure of ${{\mathbb F}_p}^{alg}(t)$.
Then $S(G^\sharp )(K_1)$ is contained in the torsion of $G$.\\
2. $S(G^\sharp)$ has finite relative Morley Rank.  
\end{proposition}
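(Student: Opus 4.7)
The plan is to use the decomposition from Corollary \ref{socledescribe},
$$S(G^\sharp) = G_0^\sharp + A_G^\sharp,$$
where $G_0 < G$ is isogenous to a semiabelian variety over $k = K^{p^\infty}$, $A_G < G$ is the maximal abelian subvariety of $k$-trace zero, $G_0 \cap A_G$ is finite, and $G_0^\sharp, A_G^\sharp$ are orthogonal. Both parts of the proposition will be proved by treating the two summands separately.

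For part 1, first note that $K_1$ is separably closed with field of constants $\mathbb{F}_p^{alg}$. By Lemma \ref{descending}(v), $G_0$ and $A_G$ are in fact already defined over $K_1$. Using Proposition \ref{lineardisjointness}(2) together with Lemma \ref{descending}(iv), and the linear disjointness of $k$ and $K_1$ over $\mathbb{F}_p^{alg}$, $G_0$ is $K_1$-isogenous to $H \times_{\mathbb{F}_p^{alg}} K_1$ for some semiabelian variety $H$ over $\mathbb{F}_p^{alg}$. Since this isogeny has finite kernel, any element of $G_0^\sharp(K_1)$ lifts, up to torsion, to an element of $H(\mathbb{F}_p^{alg}) = \bigcup_q H(\mathbb{F}_q)$, which is torsion because each $H(\mathbb{F}_q)$ is finite; hence $G_0^\sharp(K_1)$ is contained in the torsion of $G$. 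For $A_G^\sharp(K_1)$: since $A_G$ is an abelian variety over $K_1$ with $\mathbb{F}_p^{alg}$-trace zero, this is precisely the Manin–Mumford style statement used in \cite{BBP2}, which gives that $A_G^\sharp(K_1)$ is contained in the torsion of $A_G$.

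For part 2, I again split $S(G^\sharp)$ as above. For $G_0^\sharp$: by Lemma 3.4 of \cite{BBP1}, the isogeny $H \times_k K \to G_0$ induces a definable surjection $(H \times_k K)^\sharp \to G_0^\sharp$ with finite kernel, and by Fact \ref{FactGsharpbasic}(v), $(H \times_k K)^\sharp = H(k)$. Since $k$ is a pure algebraically closed field, stably embedded in $K$, and $H(k)$ has Morley rank $\dim H$ computed inside $k$, the group $G_0^\sharp$ has finite relative Morley rank. For $A_G^\sharp$: finite relative Morley rank of $A^\sharp$ for an abelian variety $A$ is one of the known good properties of the $\sharp$-functor in the abelian case, in contrast with the general semiabelian setting, where \cite{BBP1} exhibits $G$ with $G^\sharp$ of infinite Morley rank. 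Finally, the addition map $G_0^\sharp \times A_G^\sharp \to S(G^\sharp)$ has finite kernel since $G_0 \cap A_G$ is finite, so $S(G^\sharp)$ itself has finite relative Morley rank.

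The main obstacle I expect is the descent step in part 1 — being careful that the semiabelian variety $H$ providing the isogeny for $G_0$ can be taken over $\mathbb{F}_p^{alg}$ (and not merely over the big constant field $k$), and that the $K_1$-isogeny descends from the $K$-isogeny. This is exactly what the linear disjointness of $k$ and $K_1$ over $\mathbb{F}_p^{alg}$, as in Proposition \ref{lineardisjointness}(2), is used for.
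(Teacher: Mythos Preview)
Your argument for part~1 is essentially the paper's own: decompose $S(G^\sharp)=G_0^\sharp+A_G^\sharp$, descend $G_0$ along the linear disjointness of $k$ and $K_1$ over $\Fp^{alg}$ to something isogenous to a semiabelian variety over $\Fp^{alg}$ (so its $K_1$-points of the $\sharp$-group are torsion), and invoke R\"ossler's theorem for the trace-zero abelian part.

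For part~2 your route is correct but genuinely different from the paper's. The paper does not use the $G_0^\sharp + A_G^\sharp$ splitting at all here; instead it appeals directly to the abstract description $S(G^\sharp)=B_{Q_1}+\ldots+B_{Q_n}$ with each $B_{Q_j}$ almost minimal, and then to the general fact (\cite{BBP1}, Section~2.3 and Fact~3.8) that an almost minimal type-definable group has finite relative Morley rank. Your argument is more concrete: you identify $G_0^\sharp$, up to isogeny, with $H(k)$ living in the pure algebraically closed constant field, and for $A_G^\sharp$ you quote the abelian-variety case from \cite{BBP1}. The paper's argument is shorter and works uniformly without distinguishing the ``constant'' and ``trace-zero'' pieces; yours has the advantage of being explicit about where the finite Morley rank actually comes from in each summand, and it does not rely on the general almost-minimal $\Rightarrow$ finite relative Morley rank fact. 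Either way the conclusion follows, and the finiteness of $G_0\cap A_G$ handles the passage from the product to the sum as you note.
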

\begin{proof} 1.  Let $K_0$ denote ${{\mathbb F}_p}^{alg}$, the field of constants of the separably closed field $K_1$. Then  without loss of generality, $K_0 < k <K$, $K_0<K_1<K $ and $K_1$ and $k =C(K)$ are linearly disjoint over $K_0 = C(K_1)$.
  
As we noted in Lemma \ref{descending},  as $G$ is defined over $K_1$ then both the $K_0$-algebraic socle of $G$, $S_{K_0 }(G)$,  and the model theoretic socle 
$S(G^\sharp)= (S_{k}(G\times_{K_1} K))^\sharp$ are also defined over $K_1$, as are all their connected type-definable (or connected  closed,  in the algebraic case) subgroups.  
Also if $(S_{k}(G))^\sharp = {(G_0 \times_{K_1} K) }^\sharp + {(A_G\times_{K_1}K)}^\sharp$ as above, then $G_0$ is also the biggest closed connected algebraic subgroup of $G$ which is isogenous to $H \times_{K_0} K_1$, for some $H$ defined over $K_0$
 and $A_G$ is the biggest abelian subvariety with $K_0$-trace zero. \\
Then  $S(G^\sharp)(K_1)  =  {G_0}^\sharp (K_1) + {A_G}^\sharp (K_1)$. \\
By the result for abelian varieties (\cite{Roessler-torsion}), ${A_G}^\sharp (K_1)$ is contained in the torsion of $A_G$.  As $G_0$ is isogenous to some $S_0$ defined over $K_0$, the field of constants of $K_1$,  it follows that ${G_0}^\sharp (K_1)$ is definably isogenous to $S_0^\sharp(K_1) = S_0(K_0) = S_0({\Fp}^{alg})$  which is exactly the group of torsion elements of $S_0$.\\
2. By definition of the model theoretic socle, $S(G^\sharp) \subset acl (F\cup  Q_1\cup \ldots \cup Q_k)$ for some finite set $F$ and some (pairwise orthogonal) minimal types $Q_i$ in $G^\sharp$, and is of the form $B_{Q_1} +\ldots  + B_{Q_n}$, where each $B_{Q_j}$ is almost minimal with respect to $Q_j$. Then each
$B_{Q_j}$ has finite relative  Morley rank (see for example \cite{BBP1}, section 2.3 and Fact 3.8), hence so does $S(G^\sharp)$. \end{proof}

\medskip 

We will now show that the induced structure on the socle, in contrast with $G^\sharp$ itself, always has quantifier elimination. For precise definitions and basic facts about induced structures, see for example \cite{BBP2}, where the result is proved when $G$ is an abelian variety (it was proved there in the case of degree of imperfection $1$, but the proof is also valid for any non-zero finite degree of imperfection). A counterexample to quantifier elimination of the induced structure on $G^\sharp$ is given in
\cite{OmarAziz}: it is shown there that in the canonical example  given in \cite {BBP2}(Proposition 4.9 and Corollary 4.14) of  a semiabelian $G$ such that  $G^\sharp$ does not have relative Morley rank (taking the abelian part of $G$ to be an elliptic curve), then $G^\sharp$ does not eliminate quantifiers.

\medskip 

We will need a general lemma about quantifier elimination.

\begin{lemma} \label{QEisogeny}
In an ambiant, stable and sufficiently saturated structure, let $\Gamma$ and $\Delta$ be type-definable groups, and $f:\Gamma \to \Delta$ a relatively definable isogeny (i.e. surjective with finite kernel). Assume that $\Gamma$, with the structure induced by relatively definable subsets, has quantifier elimination. Then the same holds for $\Delta$. 
\end{lemma}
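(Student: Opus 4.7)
The plan is to transfer quantifier elimination from $\Gamma$ to $\Delta$ by pulling back relatively definable subsets of $\Delta^n$ along $f$, applying QE on $\Gamma$, and then descending back, using that $N:=\ker f$ is finite to ``symmetrize'' the obtained quantifier-free formulas.

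Three structural observations organize the argument. First, the coordinatewise map $f^n:\Gamma^n\to \Delta^n$ is a relatively definable surjection with fibres of size $|N|^n$, and pullback/pushforward yields a bijection
\[
\{\text{rel.\ def.\ } D\subseteq \Delta^n\}\ \longleftrightarrow\ \{N^n\text{-invariant rel.\ def.\ }E\subseteq \Gamma^n\},
\]
the inverse being $E\mapsto f^n(E)=\{y:f^{-n}(y)\subseteq E\}$, which is relatively definable because of the finiteness of the fibres together with $N^n$-invariance. Second, the symmetrization operator $R\mapsto R^*:=\bigcap_{g\in N^n}(R+g)$ carries basic relations to $N^n$-invariant basic relations (basic relations on $\Gamma$ being closed under finite Boolean combinations and translations by elements of $\Gamma$). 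Third, under the bijection of the first observation, an $N^n$-invariant basic relation on $\Gamma^n$ descends to a basic relation on $\Delta^n$.

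With these in hand, take any relatively definable $D\subseteq \Delta^n$, written as $\psi(\,\cdot\,,b)$ for some relatively $\emptyset$-definable $\psi\subseteq \Delta^n\times \Delta^m$ and parameter tuple $b\in \Delta^m$. Pull back $\psi$ to $\Psi:=f^{-(n+m)}(\psi)\subseteq \Gamma^{n+m}$, which is relatively $\emptyset$-definable and $N^{n+m}$-invariant. Apply QE on $\Gamma$: $\Psi$ is equivalent to a QF formula in the induced language on $\Gamma$, and by Boolean closure we may take this QF description to be a single basic relation $R$ (the $\Gamma$-parameters that arise get absorbed into the extra $m$ coordinates). Symmetrize to obtain $R^*:=\bigcap_{g\in N^{n+m}}(R+g)$, which is a basic, $N^{n+m}$-invariant relation; by the $N^{n+m}$-invariance of $\Psi$ it still equals $\Psi$. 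By the descent observation, $R^*$ pushes forward to a basic relation $\tilde\psi$ on $\Delta^{n+m}$ with $\tilde\psi=\psi$. Specializing $z:=b$ then yields $D(y)\Leftrightarrow\tilde\psi(y,b)$, which is a quantifier-free description of $D$ in the induced language on $\Delta$ with parameters from $\Delta$.

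The main technical point I expect to be delicate is the parameter bookkeeping: a naive application of QE on $\Gamma$ introduces parameters $c\in\Gamma^m$ lying over $b$, and the resulting QF description a priori depends on the choice of lift $c$. The trick is to run QE not on $E=f^{-n}(D)$ (with $c$ as parameter) but on the $\emptyset$-definable $\Psi$ with the parameters promoted to free variables; the single symmetrization over $N^{n+m}$ then simultaneously eliminates the dependence on lifts in both the $x$- and $z$-blocks, and the descent machinery of the first two observations takes care of the rest. No use of the group structure beyond the closure of basic relations under translation by $\Gamma$ is needed.
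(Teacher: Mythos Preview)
Your proposal has a structural confusion and, beneath it, a genuine gap at the crucial step.

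The structural issue: quantifier elimination for the induced structure on $\Delta$ means exactly that the projection of any relatively definable $D\subseteq\Delta^{r+1}$ to $\Delta^r$ is again relatively definable. Your main argument instead starts from a relatively definable $D\subseteq\Delta^n$ and manufactures a ``quantifier-free description'' of it. But a relatively definable set \emph{is already} quantifier-free in the induced language, so this is circular. In particular your step ``Apply QE on $\Gamma$: $\Psi$ is equivalent to a QF formula'' is vacuous, since $\Psi=f^{-(n+m)}(\psi)$ is already relatively definable; the symmetrization and descent then just reconstruct $\psi$ from itself.

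The real gap is hidden in your Observation~1, which you assert without proof. You claim that for $N^n$-invariant relatively definable $E\subseteq\Gamma^n$, the image $f^n(E)=\{y:f^{-n}(y)\subseteq E\}$ is relatively definable in $\Delta^n$, ``because of the finiteness of the fibres together with $N^n$-invariance''. This does not follow. To locate any point of the fibre $f^{-n}(y)$ you must quantify existentially over the \emph{type-definable} set $\Gamma^n$, and that existential is not a priori eliminable. Finite fibres let you enumerate the whole fibre once you have one point of it, but producing that one point is precisely the problem. The paper handles this by compactness: writing $\Gamma=\bigcap_i\Gamma_i$ with the $\Gamma_i$ definable and extending $f$ to a definable $\tilde f$ on $\Gamma_0$, one finds $i_0$ with $\ker\tilde f\cap\Gamma_{i_0}=\ker f$. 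Then for $y\in\Delta$, any $\tilde f$-preimage of $y$ lying in $\Gamma_{i_0}$ differs from a genuine $f$-preimage (which exists by surjectivity) by an element of $\ker\tilde f\cap\Gamma_{i_0}=\ker f\subseteq\Gamma$, hence already lies in $\Gamma$. This replaces the type-definable quantifier by a definable one and makes $f^n(E)$ relatively definable.

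Once Observation~1 is properly justified in this way, the correct proof is short and is essentially the paper's: for relatively definable $D\subseteq\Delta^{r+1}$, pull back to $E=(f^{r+1})^{-1}(D)$, project to $\mathrm{pr}_\Gamma(E)\subseteq\Gamma^r$ (relatively definable by QE on $\Gamma$, and $N^r$-invariant), then push forward by $f^r$ to obtain $\mathrm{pr}_\Delta(D)=f^r(\mathrm{pr}_\Gamma(E))$, relatively definable by the now-justified Observation~1. Your symmetrization operator and the parameter bookkeeping in your last paragraph are not needed.
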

\begin{proof}
Write $\Gamma=\bigcap_i \Gamma_i$ as the intersection of a decreasing family of definable groups. We may assume that $f$ is the restriction to $\Gamma$ of a definable group homomorphism of domain $\Gamma_0$, denoted by $\tilde f$. Since $\hbox{Ker}(f)$ is definable, because finite, we know by compactness that there is some $i_0$ such that $\hbox{Ker}(f)=\hbox{Ker}(\tilde f) \cap \Gamma_{i_0}$.\\
Now consider a relatively definable  subset $D\subseteq \Delta^{r+1}$, and $\hbox{pr}_\Delta:\Delta^{r+1} \to \Delta^r$ a projection map; we have to prove that $\hbox{pr}_{\Delta}(D)$ is relatively definable. Let us denote by $f^r:\Gamma^r \to \Delta^r$ the cartesian power of $f$, then $\hbox{pr}_{\Delta}(D)=f^r(\hbox{pr}_{\Gamma}((f^{r+1})^{-1}(D)))$. By quantifier elimination for $\Gamma$, we know that $F:=\hbox{pr}_{\Gamma}((f^{r+1})^{-1}(D))$ is relatively definable in $\Gamma^r$; write it $F=\tilde F \cap \Gamma^r$ with $\tilde F$ definable.\\
Now 
$f^r (F) = \{(d_1,\ldots , d_r) \in \Delta^r; \exists g_1\ldots  \exists g_r \in \Gamma,\bigwedge_j f(g_j)=d_j \wedge (g_1,\ldots,g_r)\in \tilde F\}$ is contained in $X = \{(d_1,\ldots , d_r) \in \Delta^r; \exists g_1 \ldots  \exists g_r \in \Gamma_{i_0} ,\bigwedge_j \tilde f(g_j)=d_j \wedge (g_1,\ldots,g_r)\in \tilde F\}$.

Note that $X$ is a relatively definable subset of $\Delta^r$. If $(d_1,\ldots , d_r) \in X$, then for each $j$, 
$ d_j = \tilde f (\tilde g_j)$, with $\tilde g_j \in \Gamma_{i_0}$ and $(\tilde g_1,\ldots,\tilde g_r) \in \tilde F$; but since $f:\Gamma \to \Delta$ is surjective, we also have $d_j=f(g_j)$ with $g_j\in \Gamma$. So for each $j$, $\tilde g_j-g_j \in \hbox{Ker}(\tilde f)\cap \Gamma_{i_0} =\hbox{Ker}(f) \subset \Gamma$. Hence in fact $\tilde g_j \in \Gamma$ and $f^r (F) = X$, so $\hbox{pr}_{\Delta}(D)$  is relatively definable.
\end{proof}

\medskip

We are now back to $G$ defined over $K$. To simplify  notation, we now denote  $S(G) $ by $S$, $G_0$ by $H$ and $A_G$ by $A$ in the rest of this  section, hence $S(G^\sharp)= S^\sharp = H^\sharp + A^\sharp $. 

\begin{proposition} \label{QEsocle} The type-definable group $S^\sharp$, with the induced structure from $G$, has quantifier elimination.\end{proposition}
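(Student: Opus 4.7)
The plan is to reduce quantifier elimination on $S^\sharp$ to QE on the two factors $H^\sharp$ and $A^\sharp$ by combining Lemma \ref{QEisogeny} with the orthogonality decomposition, and then to invoke QE for the two already-known cases. First, since $H \cap A$ is finite, the addition map $\mu: H \times A \to H+A = S$ is a surjective morphism of algebraic groups with finite kernel, i.e.\ an isogeny. It restricts to a relatively definable surjective homomorphism $\mu: H^\sharp \times A^\sharp \to S^\sharp$ with finite kernel: surjectivity holds because for an isogeny of semiabelian varieties the $\sharp$-functor is compatible with the image (Lemma 3.4 in \cite{BBP1}), combined with $(H+A)^\sharp = H^\sharp + A^\sharp$ (Fact \ref{FactGsharpbasic}(iv)). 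By Lemma \ref{QEisogeny}, it then suffices to prove QE for $H^\sharp \times A^\sharp$ with its induced structure.

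For the second reduction, I would use orthogonality: $H^\sharp \perp A^\sharp$ by Corollary \ref{socledescribe}, and both factors are type-definable of finite $U$-rank and rigid (Proposition \ref{newGsharp}, Corollary \ref{char.0}), hence stably embedded. A standard stability-theoretic consequence of full orthogonality plus stable embeddedness is that every relatively definable subset of $H^\sharp \times A^\sharp$ is a finite Boolean combination of rectangles $X \times Y$ with $X$ relatively definable in $H^\sharp$ and $Y$ relatively definable in $A^\sharp$. Hence projections of relatively definable sets from $(H^\sharp \times A^\sharp)^{n+1}$ to $(H^\sharp \times A^\sharp)^n$ stay relatively definable as soon as this holds for each factor, so QE for the product reduces to QE for each factor.

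For the factor $A^\sharp$, QE is the abelian variety result proved in \cite{BBP2} (which is valid in any finite non-zero degree of imperfection; the $k$-trace-zero hypothesis plays no role in QE itself). For $H^\sharp$, by definition of $G_0 = H$ there is a semiabelian $J$ defined over $k$ with $H$ isogenous to $J_K := J \times_k K$. Since isogeny of commutative algebraic groups is symmetric (composition with the dual isogeny gives multiplication by $n$), we may choose an isogeny $\psi: J_K \to H$, which by the same argument as above restricts to a relatively definable isogeny $\psi: J^\sharp \to H^\sharp$. By Fact \ref{FactGsharpbasic}(v), $J^\sharp = J(k)$, and the structure induced on $J(k)$ from the ambient $K$ coincides with the pure algebraic-geometric structure of $J$ over the algebraically closed field $k$ (by stable embeddedness of the constant field $k$ in $K$); that structure has QE. Lemma \ref{QEisogeny} then transfers QE from $J^\sharp$ to $H^\sharp$, completing the argument.

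The main obstacle I anticipate is the orthogonality-to-rectangles step in the second paragraph: orthogonality of the groups $H^\sharp$ and $A^\sharp$ must be upgraded to full orthogonality of the relevant families of (generic) types and combined with stable embeddedness to yield the Boolean-rectangle decomposition of relatively definable subsets of the product. The remaining two steps are essentially bookkeeping around Lemma \ref{QEisogeny}, once one checks carefully that the algebraic isogenies involved restrict to relatively definable isogenies between the $\sharp$-groups.
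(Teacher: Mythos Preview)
Your proposal is correct and follows essentially the same route as the paper: reduce via Lemma~\ref{QEisogeny} along the addition isogeny $H^\sharp\times A^\sharp\to S^\sharp$, use orthogonality to get a rectangle decomposition of relatively definable subsets of the product, and then handle each factor separately ($A^\sharp$ by \cite{BBP2}, $H^\sharp$ by a second application of Lemma~\ref{QEisogeny} along an isogeny from $J(k)$ with $J$ over the constants). The paper additionally records that $H^\sharp$ and $A^\sharp$ are relatively definable in $S^\sharp$, but this is not essential to the QE argument; and the orthogonality-to-rectangles step you flag as a potential obstacle is exactly what the paper invokes in one line as a standard consequence of orthogonality.
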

\begin{proof}

 We need to prove that, in $S^\sharp$, the projection of a relatively definable subset is still relatively definable. 
  
  \begin{claim} Both $H^\sharp$ and $A^\sharp$ are relatively definable in $S^\sharp$.\end{claim}
  \begin{prclaim} This  follows directly from the fact that $S^\sharp$ has finite relative Morley rank:  By \cite{BBP1} Lemma 3.6,  $H^\sharp$ is the connected component of $S^\sharp \cap H$. As $S^\sharp$ has finite relative Morley rank, it follows that $H^\sharp$ has finite index in $S^\sharp \cap H$, hence $H^\sharp$ is a finite union of coset of the relatively definable group $S^\sharp \cap H$.  Similarly for $A^{\sharp}$. 

 There is an alternative   short argument to show that the index is finite, which only uses the fact that $H\cap A$ is finite:  Consider any coset of $H^\sharp $ in $S^\sharp \cap H$, $s+H^\sharp$. As $s \in S^\sharp\cap H$, $s = h+a$, with $h \in H^\sharp$ and $a \in A^\sharp \cap H$,  
so $s+H^\sharp = a +H^\sharp$, and $a \in A \cap H$, which is finite. Same argument for $A^\sharp$. \end{prclaim}


 
Since the multiplication map gives an isogeny $H^\sharp \times A^\sharp \to H^\sharp+A^\sharp=S^\sharp$, we know by Lemma \ref{QEisogeny} that it suffices to prove quantifier elimination for $H^\sharp \times A^\sharp$. The semiabelian  variety $H$ is isogenous to $B \times_k K$ for some $B$ defined over the  field of constants $k$, hence $H^\sharp$ is isogenous to $B^\sharp = B(k)$. As the field $k$ is a pure algebraically closed subfield  in $K$, $B(k)$ has quantifier elimination, and $H^\sharp$ also by Lemma \ref{QEisogeny} again. And we know from \cite{BBP2} that $A^\sharp$ has quantifier elimination (this is true for all abelian varieties).

Now, by orthogonality of $A^\sharp $ and   $ H^\sharp$, if $F$ is a relatively definable subset of $(H^\sharp \times A^\sharp)^{r}$, then $F = \bigcup_{1<i<l} R_i \times T_i$, where $R_i$ is a relatively definable subset of ${H^\sharp}^r$, and $T_i$ a relatively definable subset of ${A^\sharp}^r$. It follows easily that in $(H^\sharp \times A^\sharp)^{r}$ , the projection of any relatively definable subset is also relatively definable.\end{proof}




\subsection{Some examples}\label{calculations}

In the following, we consider fields $k \subset K$, with $k$ algebraically closed,  and $K$ separably closed, in arbitrary characteristic; $G$ is a semiabelian variety over $K$.

We first deal with the extreme cases. If $G$ is isogenous to a semiabelian variety over $k$, then $S_{k}(G)=G$. If $G$ is an abelian variety, $S_{k}(G)=G$, as we already noted in lemma \ref{descending}. It is a consequence of the decomposition of abelian varieties into a sum of simple abelian varieties, hence a consequence  of the Poincar\'e reducibility Theorem. And in fact,  the gap between $G$ and $S_{k}(G)$ ``witnesses''  the defect in Poincar\'e's reducibility  for semiabelian variety, up to descent to $k$.

\smallskip

Let $G$ be an extension of an abelian variety $A$ by a torus $T=\Gm^s$, all defined over  $K$. Let us recall that such extensions are parametrized, up to isomorphism, by $Ext(A,T)\simeq \hat A^s$, where $\hat A$ is the dual abelian variety of $A$. We will say that the extension $0 \to T \to G \to A \to 0$ is {\em almost split} if there is a semiabelian subvariety $B$ of $G$ such that $G=T+B$ and $T\cap B$ is finite. Note that in this case, $B$ must be isogenous to $A$, and it follows that $G$ is almost split if and only if $G$ is isogenous to the direct product $T \times A$ (which corresponds to the neutral element in $Ext(A,T)$).

Recall that we say $G$ {\em descends } to $k$ if $G$ is {\em isomorphic} to  a semiabelian variety $H \times_{k} K$, where $H$ is defined over $k$.  In characteristic $0$, if $G$ is isogenous to some $H \times_{k} K$, where $H$ is defined over $k$, then $G$ descends to $k$ but in characteristic $p$, this is not always true.

The following facts are undoubtly  quite classical. For lack of an easy reference, we reprove them in  Appendix II. 
\begin{fact} \label{Ext}
1. $G$ is almost  split if and only if $G$ is a torsion point in $Ext(A,T)$.\\
2. Assume that $A$ is defined over $k$. If $G$ is isogenous to $H \times_k  K$, where $H$ is a semiabelian variety over $k$, then $G$ descends to $k$.
\end{fact}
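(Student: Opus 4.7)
The plan is to reason throughout through the classical identification $Ext(A,T)\cong \hat A(K)^s$ (with $\hat A$ the dual of $A$ and $T=\Gm^s$), translating statements about $G$ into statements about the class $[G]\in\hat A(K)^s$. Two functorialities are used throughout: pullback $[n]_A^*$ acts as multiplication by $n$ on $\hat A$, and an isogeny $\phi_T:\Gm^s\to\Gm^s$ corresponds to an integer matrix $M$ on cocharacters, with $(\phi_T)_*$ acting on $\hat A(K)^s$ by multiplication by this matrix.

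For part 1, in the forward direction, if $G=T+B$ with $B$ a semiabelian subvariety and $B\cap T$ finite, then $\pi|_B:B\to A$ is an isogeny of some degree $n$; choose a dual isogeny $\rho:A\to B$ satisfying $\pi|_B\circ\rho=[n]_A$. The inclusion $B\hookrightarrow G$ provides a section for the pullback extension $\pi|_B^*[G]\in Ext(B,T)$, which is therefore split. Pulling back once more by $\rho$ yields $n\cdot[G]=\rho^*\pi|_B^*[G]=0$, so $[G]$ is torsion. Conversely, assuming $n\cdot[G]=0$, the pullback of $G$ along $[n]_A:A\to A$ is split and produces a homomorphism $s:A\to G$ with $\pi\circ s=[n]_A$. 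Then $B:=s(A)$ is a closed connected subgroup (image of a complete variety) with $\pi|_B$ surjective and $\ker(\pi|_B)=B\cap T$ finite, so $G=T+B$ is almost split.

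For part 2, write $H$ as $0\to T'\to H\to A'\to 0$ over $k$. The isogeny $\phi:G\to H_K$ sends the unique maximal torus of $G$ into that of $H_K$, so it restricts to an isogeny $\phi_T:T\to T'_K$ (forcing $\dim T'=s$) and induces an isogeny $\phi_A:A_K\to A'_K$ on abelian quotients. By the rigidity of $\mathrm{Hom}$ for abelian varieties under primary extensions --- valid since $k$ is algebraically closed in $K$ --- $\phi_A$ descends to an isogeny $\phi_A^0:A\to A'$ over $k$. The compatibility square for $\phi$ yields $(\phi_T)_*[G]=\phi_A^*[H_K]=((\phi_A^0)^*[H])_K$ in $Ext(A,T'_K)$, and the right-hand side lies in the image of $\hat A(k)^s\hookrightarrow\hat A(K)^s$. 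Representing $\phi_T$ by an integer $s\times s$ matrix $M$, this gives $M\cdot[G]\in\hat A(k)^s$; left-multiplication by the integer matrix $|\det M|\cdot M^{-1}$ yields $N\cdot[G]\in\hat A(k)^s$ with $N:=|\det M|$.

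Finally, since $k$ is algebraically closed $\hat A(k)$ is divisible, so write $N\cdot[G]=N\cdot W$ with $W\in\hat A(k)^s$; then $[G]-W\in\hat A[N](K)^s$. One verifies $\hat A[N](K)=\hat A[N](k)$ by decomposing $\hat A[N]$ over $k$ into its \'etale and infinitesimal parts: the \'etale part is a disjoint union of copies of $\mathrm{Spec}\,k$ (since $k$ is algebraically closed) and hence has the same $K$- and $k$-points, while each infinitesimal component is a local Artinian $k$-scheme with residue field $k$ admitting a unique field-valued point. Therefore $[G]-W\in\hat A(k)^s$, so $[G]\in\hat A(k)^s$; combined with the hypothesis that $A$ and $T=\Gm^s$ are already defined over $k$, the $Ext$-classification forces $G$ to descend to $k$. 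The main obstacle is the descent of the isogeny $\phi_A$ from $K$ to $k$, which rests on the rigidity of $\mathrm{Hom}$ for abelian varieties under primary extensions; a secondary subtlety in characteristic $p$ is the analysis of $\hat A[N](K)$ when $p\mid N$, cleanly handled by the \'etale/infinitesimal decomposition above.
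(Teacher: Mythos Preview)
Your proof is correct and follows the same overall strategy as the paper's Appendix~II (Propositions~\ref{torsionsplit} and~\ref{isogenousdescent}): exploit the bifunctoriality of $Ext(A,T)$ together with the identities $[n]G=[n]_A^*G=[n]_{T*}G$, then in part~2 descend the induced isogeny on abelian quotients to $k$ and conclude via divisibility of $\hat A(k)$.

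There are a few presentational differences worth noting. In part~1, the paper argues through the equivalent characterization ``$G$ isogenous to $T\times A$'' and the general fact that $G$ is isogenous to $[n]G$, whereas you work directly with the decomposition $G=T+B$: your section argument for the forward direction and your explicit construction of $B=s(A)$ for the converse are a bit more hands-on but entirely equivalent. In part~2, your matrix formulation of the pushforward by $\phi_T$ is exactly the paper's choice of $\theta$ with $\theta\circ\phi_T=[n]_T$; your ``rigidity of $\mathrm{Hom}$ under primary extensions'' is what the paper invokes as Chow's theorem. The one place you are more careful than the paper is the last step: the paper simply asserts that $[n]G\in\hat A^s(k)$ with $k$ algebraically closed implies $G\in\hat A^s(k)$, while you spell out why $\hat A[N](K)=\hat A[N](k)$ via the \'etale/infinitesimal decomposition. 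That extra care is justified in characteristic $p$ and is a genuine improvement in exposition.
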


Now let us summarize the situation.
\begin{proposition} \label{summary}
{ 1.} Assume that $G$ is almost split. Then $S_{k}(G)=G$.\\
{2.} Conversely:\par 
(a)  If $A$ has $k$-trace $0$, $S_{k}(G)=G$ if and only if $G$ is almost split.  \par 
(b) If $A$ is isogenous to an abelian variety over $k$, then $S_{k}(G) = G$ if and only if $G$ is isogenous to a semiabelian variety over $k$. If we assume moreover that $A$ is simple, then $S_{k}(G)\neq G$ if and only if $S_{k}(G) = T$. \par
(c) If $A$ descends to $k$, then $S_{k}(G)=G$ if and only if $G$ descends to $k$. 
\end{proposition}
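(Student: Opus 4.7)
The plan is to exploit the defining decomposition $S_{k}(G) = G_{0} + A_{G}$ together with two recurring observations: (a) the torus $T$ is isomorphic to $\Gm^{s}$ over $K$, hence descends to $k$, so $T \subset G_{0}$ by maximality of $G_{0}$; and (b) by Poincar\'e reducibility and semisimplicity of the isogeny category of abelian varieties, any abelian subvariety of a $k$-trace-zero abelian variety has $k$-trace zero, while any abelian subvariety of an abelian variety isogenous to one over $k$ is itself isogenous to one over $k$.

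For (1), I would write $G = T + B$ with $B$ isogenous to $A$ and $T \cap B$ finite, then apply Poincar\'e reducibility to obtain abelian subvarieties $B_{1}, B_{2} \subset B$ with $B = B_{1} + B_{2}$, $B_{1} \cap B_{2}$ finite, $B_{1}$ isogenous to an abelian variety over $k$, and $B_{2}$ of $k$-trace zero. Then $B_{1} \subset G_{0}$ and $B_{2} \subset A_{G}$ by maximality, and combined with $T \subset G_{0}$ this gives $G \subset S_{k}(G)$. For (2a), I would observe that the abelian quotient of $G_{0}$, namely $G_{0}/(G_{0} \cap T)$, is simultaneously an abelian subvariety of the trace-zero $A$ (hence of trace zero) and isogenous to an abelian variety over $k$ (inherited from $G_{0}$); these force it to be trivial, so $G_{0} \subset T$ and thus $G_{0} = T$. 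Then $S_{k}(G) = T + A_{G}$, and $S_{k}(G) = G$ exactly means $G = T + A_{G}$ with $T \cap A_{G}$ finite, which is precisely the almost-split condition; the converse direction is part (1).

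For (2b), the dual argument on $A_{G}$ shows that the image $\pi(A_{G}) \subset A$ is isogenous to $A_{G}$ (kernel $A_{G} \cap T$ is finite), so simultaneously of trace zero and isogenous to one over $k$, hence trivial; then $A_{G} \subset T$ is a connected abelian subvariety of a torus and must vanish. So $S_{k}(G) = G_{0}$, giving the first equivalence. For the simple-$A$ refinement, assume $G_{0} \neq G$: the image $\pi(G_{0}) \subset A$ is a connected closed subgroup, so by simplicity either $\pi(G_{0}) = 0$, in which case $G_{0} \subset T$ and hence $G_{0} = T$, or $\pi(G_{0}) = A$, in which case $G_{0} + T$ surjects onto $A$ with kernel $T$ and so $G_{0} + T = G$ on dimension grounds. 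In the latter case, the surjection $G_{0} \times T \to G$ (with finite kernel identified with $G_{0} \cap T$) composed with an isogeny $H \times_{k} K \to G_{0}$, for $H$ semiabelian over $k$, exhibits $G$ as isogenous to $(H \times T) \times_{k} K$, forcing $G = G_{0}$ by maximality, a contradiction. So $G_{0} = T$ and $S_{k}(G) = T$. Part (2c) follows immediately from (2b): the hypothesis that $A$ descends to $k$ is stronger than $A$ being isogenous to one over $k$, so (2b) gives $S_{k}(G) = G$ iff $G$ is isogenous to a semiabelian variety over $k$, and Fact~\ref{Ext}(2) upgrades ``isogenous'' to ``descends'' in the presence of $A/k$.

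The main obstacle will be the simple-$A$ half of (2b): coherently chaining the surjection $G_{0} \to A$, the dimension equality $G_{0} + T = G$, and the two isogenies (of $G_{0}$ with $H \times_{k} K$ and of $G_{0} \times T$ with $G$) to produce an explicit isogeny between $G$ and an over-$k$ semiabelian variety, and hence the desired contradiction. Everything else reduces to bookkeeping on the decomposition $S_{k}(G) = G_{0} + A_{G}$ and the two inheritance properties noted at the outset.
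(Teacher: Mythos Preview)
Your approach matches the paper's and is largely correct, but there is one slip worth flagging.

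For part (1), the paper is more economical: since $G = T + B$ with $B$ abelian, Lemma~\ref{descending}(iii) gives $S_{k}(G) = S_{k}(T) + S_{k}(B)$, and the two extreme cases (a torus descends to $k$; an abelian variety equals its own socle) give $S_{k}(T) = T$ and $S_{k}(B) = B$. Your explicit Poincar\'e decomposition of $B$ works but is not needed.

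In the simple-$A$ clause of (2b) you have a genuine error in the case $\pi(G_{0}) = A$. You have already recorded $T \subset G_{0}$; hence the kernel of the addition map $G_{0} \times T \to G_{0} + T$ is (the antidiagonal copy of) $G_{0} \cap T = T$, which is \emph{not} finite, so this map is not an isogeny and the chain you build from it does not go through. The fix is that the detour is unnecessary: from $T \subset G_{0}$ and $\pi(G_{0}) = A$ you get $G_{0} = G$ immediately (a closed subgroup containing $\ker\pi = T$ and surjecting onto $A$ is all of $G$), which already contradicts $G_{0} \neq G$. This is exactly the paper's one-line argument: since $S_{k}(G) = G_{0} \neq G$ and $T \subset G_{0}$, the abelian part $\pi(G_{0})$ is a proper connected subgroup of the simple $A$, hence trivial, and $G_{0} = T$.
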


\begin{remark}
{\em The previous proposition enables  us to cover all the cases: If the extension is given by the map $f:G\to A$, and if $A=A_1+\ldots+A_r$ is the decomposition of $A$ into a sum of simple abelian varieties, then $G=G_1+\ldots+G_r$, where $G_i=f^{-1}(A_i)$ and $S_{k}(G)=S_{k}(G_1)+\ldots+S_{k}(G_r)$. And, for $A_i$ a simple abelian variety, either $A_i$ has $k$-trace $0$, or $A_i$ is isogenous to some $B \times _k K$, for $B$ an abelian variety over $k$.}

\end{remark}

\begin{proof}
\begin{enumerate}
\item If $G=T+B$, where $B$ is an abelian subvariety of $G$, we know from the extreme cases that $S_{k}(G)=S_{k}(T)+S_{k}(B)=T+B=G$.
\item
\begin{enumerate}
\item Assume that $A$ has $k$-trace $0$ and $S_{k}(G)=G_0+A_G=G$. Since $A$ has $k$-trace $0$, $G_0$ has a trivial abelian part, i.e. $G_0=T$. Hence $G$ is almost split.
\item One direction is one of the extreme cases. Now assume that $G$ is not isogenous to a semiabelian variety over $k$, and write $S_{k}(G)=G_0+A_G$. The map $f:G \to A$ induces a map $A_G \to A$, with finite kernel. But $A_G$ has $k$-trace $0$, and $A$ is isogenous to an abelian variety over $k$, hence $A_G=0$. Now $S_{k}(G)=G_0\neq G$, since $G_0$ is isogenous to some $H_K$, for $H$ a semiabelian variety over $k$, contrary to $G$. If we assume furthermore that $A$ is simple, we know that $T \subseteq G_0$, and the abelian part of $G_0$ is $0$ since it can not be $A$. Hence $S_{k}(G)=T$.
\item We can apply (b) for the remaining implication: if $S_k(G)=G$, then $G$ is isogenous to a semiabelian variety over $k$. But using Fact \ref{Ext}.2, this  implies that $G$ descends to $k$.
\end{enumerate}
\end{enumerate}
\end{proof}

\noindent
Suppose now  that 
$K$ is differentially closed of characteristic $0$, or $K$ is a (sufficiently saturated) model of $SCF_{p,1}$, and let $k=C(K)$ be the field of constants. Let $0\to T \to G \to A \to 0$ be a semiabelian variety over $K$. We discussed in \cite{BBP1} the question of whether the induced sequence
$$
(\sharp) \quad 0 \to T^\sharp \to G^\sharp \to A^\sharp \to 0
$$  
is exact or not. It relates to the current question:

\begin{proposition}
If $S_k(G) =  G$, then  the sequence $(\sharp)$ is exact. 
\end{proposition}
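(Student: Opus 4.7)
Plan: The sequence $(\sharp)$ is $0 \to T^\sharp \to G^\sharp \to A^\sharp \to 0$, and surjectivity of $G^\sharp \to A^\sharp$ is automatic from the dominance of the quotient map $G \to A$ together with Lemma 3.4 of \cite{BBP1}. It therefore suffices to show $G^\sharp \cap T \subseteq T^\sharp$. The assumption $S_k(G) = G$ provides the decomposition $G = G_0 + A_G$ with $G_0 \cap A_G$ finite, in which $G_0$ is isogenous to some $H_K := H \times_k K$ with $H$ semiabelian over $k$, and $A_G$ is an abelian variety. Since the torus $T$ is itself isogenous to $\Gm^{\dim T}$ (defined over $k$), the maximality of $G_0$ forces $T \subseteq G_0$; in particular $T$ is the maximal torus of $G_0$. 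By Fact 3.4(iv), $G^\sharp = G_0^\sharp + A_G^\sharp$, and for $x = g_0 + a \in G^\sharp \cap T$ with $g_0 \in G_0^\sharp$ and $a \in A_G^\sharp$, the relation $a = x - g_0 \in G_0$ places $a$ in the finite group $A_G \cap G_0$, so $a$ is torsion, already lies in $G_0^\sharp$, and therefore $x \in G_0^\sharp \cap T$. The problem thus reduces to the case when $G = G_0$ is itself isogenous to some $H_K$ with $H$ semiabelian over $k$.

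In this reduced case, choose mutually complementary isogenies $\phi \colon H_K \to G$ and $\psi \colon G \to H_K$, so that $\phi \circ \psi = [n]_G$ for some positive integer $n$. Since $H_K$ descends to $k$, Fact 3.4(v) gives $H_K^\sharp = H(k)$, and likewise $T_H^\sharp = T_H(k)$, where $T_H$ is the maximal torus of $H_K$. A dimension count shows that the restrictions $\psi|_T \colon T \to T_H$ and $\phi|_{T_H} \colon T_H \to T$ are both surjective isogenies of tori (the image of a torus under an isogeny is a subtorus of full dimension). For $x \in G^\sharp \cap T$, Lemma 3.4 of \cite{BBP1} applied to the dominant map $\psi$ yields $\psi(x) \in H_K^\sharp = H(k)$, while $\psi(x) \in \psi(T) = T_H$; hence $\psi(x) \in T_H \cap H(k) = T_H(k) = T_H^\sharp$. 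Applying $\phi$ and using Lemma 3.4 for the dominant map $\phi|_{T_H}$, we obtain $nx = \phi(\psi(x)) \in \phi(T_H^\sharp) = T^\sharp$. Finally, since $T^\sharp$ is divisible and contains all torsion of $T$, one can pick $t \in T^\sharp$ with $nt = nx$; then $x - t \in T[n] \subseteq T^\sharp$, so $x \in T^\sharp$ as required.

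The main new idea is the use of the complementary isogeny $\psi$ to transfer $x$ into $H_K$, which descends to $k$ and for which $\sharp$ is explicitly $H(k)$ via Fact 3.4(v); this is the precise point at which the descent hypothesis is used. The step requiring most care is verifying that $\psi(x)$ lands in the torus $T_H(k)$, not merely in $H(k)$; this follows from the fact that isogenies of semiabelian varieties carry maximal tori onto maximal tori. The factor $[n]$ introduced by working with an isogeny rather than an isomorphism is absorbed using the divisibility of $T^\sharp$ and the inclusion of all torsion of $T$ in $T^\sharp$, and the argument goes through uniformly in both characteristics.
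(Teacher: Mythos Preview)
Your argument is correct and gives a genuinely different proof from the paper's. The paper argues by contrapositive: assuming $(\sharp)$ is not exact, it shows $S(G^\sharp)\neq G^\sharp$ by exhibiting a model-theoretic invariant that distinguishes them --- in characteristic $p$, $S(G^\sharp)$ always has finite relative Morley rank (Proposition~\ref{morleyrank}) while $G^\sharp$ does not when $(\sharp)$ fails; in characteristic $0$, every connected definable subgroup of $S(G^\sharp)$ has the form $H^\sharp$ (Corollary~\ref{char.0}) while $G^\sharp$ acquires one that does not (Remark~\ref{notexact}). Your proof is instead a direct, hands-on verification that $G^\sharp\cap T\subseteq T^\sharp$, exploiting the decomposition $G=G_0+A_G$ and an explicit isogeny with a semiabelian variety over $k$. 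The paper's route is shorter because it cashes in structural results proved earlier in the section; yours is more self-contained and makes the role of descent to $k$ completely transparent.

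One step you pass over deserves a word of justification in characteristic $p$: the assertion that the torsion element $a\in A_G\cap G_0$ ``already lies in $G_0^\sharp$''. In characteristic $0$ this is immediate from the definition of $G_0^\sharp$ as the Kolchin closure of the torsion. In characteristic $p$ one needs that every torsion point of $G_0(\U)$ lies in $p^\infty G_0(\U)$; for prime-to-$p$ torsion this is standard, and for $p$-power torsion it follows because $G_0[p^\infty](\U)$ coincides with the $\U$-points of the \'etale part of the $p$-divisible group of $G_0$, which over the separably closed $\U$ is a divisible $p$-group and hence sits inside the maximal divisible subgroup $G_0^\sharp$. Alternatively you can sidestep this entirely: if $ma=0$ then $mx=mg_0\in G_0^\sharp\cap T$, and your final divisibility-plus-torsion trick (applied with $mn$ in place of $n$) recovers $x\in T^\sharp$. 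Either way the proof goes through.
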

\begin{proof}
Suppose that the sequence $(\sharp)$ is not exact. Since $S_k(G)^\sharp=S(G^\sharp)$, it suffices to prove that $S(G^\sharp) \neq G^\sharp$.\\
In characteristic $p$, we have shown (Proposition \ref{morleyrank}) that $S(G^\sharp)$ always has  finite relative Morley rank. We had shown in  \cite{BBP1} that $G^\sharp$ had finite relative Morley rank if and only if the sequence $\sharp$ is exact. \\
In characteristic $0$, we know that every connected definable subgroup of $S(G^\sharp)$ can be written as $H^\sharp$ for some semiabelian subvariety $H$ (see Corollary \ref{char.0}), and it is not the case for $G^\sharp$ (see Remark \ref{notexact}).
\end{proof}

Unfortunately, the only known cases at the moment where the sequence $(\sharp)$ is not exact are already covered by Proposition \ref{summary}: it is when $A$ descends to $k$ (and is ordinary) but $G$ does not descend to $k$.\\
For the other direction, there are  two different kinds of examples which show that the $\sharp$ sequence can be exact without $S_k(G) = G$:
\\
In characteristic $p$, take $A$ an abelian variety over $k$ with $p$-rank $0$ (which implies $A[p^\infty](K)=0$), and $G$ an extension of $A\times_k K$ by $T$ which does not descend to $k$, then the sequence $(\sharp)$ is exact (see Corollary 4.15 in \cite{BBP1}) but  $S_{k}(G)\neq G$.\\
In characteristic $0$, take $A$ an elliptic curve which does not descend to $k$, and $G$ a non almost split extenstion of $A$ by $T$. Following \cite{BBP1}, we know that $U_A$, the maximal unipotent $D$-subgroup of the universal extension $\tilde A$ of $A$ by  a vector group (which is endowed with its unique $D$-group structure), is $0$. But in this case, the map $U_G \to U_A$, induced by the map $G \to A$, is surjective, from which we deduce that the sequence $(\sharp)$ is exact (see Proposition 5.21 in \cite{BBP1}). But we know from Proposition \ref{summary} that $S_k(G)\neq G$.


\section{Appendix I: modularity}

The key argument in Hrushovski's proof of the Mordell-Lang conjecture was a model-theoretic result, more precisely a dichotomy result for Zariski types. The question whether one can go in the other direction and deduce this dichotomy result for abelian varieties from the Mordell-lang statement has been open since. More precisely the dichotomy result one wants to deduce has the  following form, where as before $K$ is a differentially closed field in characteristic  $0$ and a separably closed field  in characteristic $p$, and $k $ is the field of constants of $K$:

\smallskip

\noindent
\begin{it}
Let $A$ be an abelian variety over $K$ with $k$-trace $0$. Then
\newline
(i) $A^{\sharp}$ is $1$-based.
\newline
(ii) If moreover $A$ is simple, then $A^{\sharp}$ is  strongly minimal. 
\end{it}
\newline

\smallskip
Remark: The reader might think that this dichotomy  follows as in  \cite{Pillay-L} where the truth of Mordell-Lang is shown to be equivalent to the $1$-basedness of $\Gamma$ in the expansion $(K,+,\times, \Gamma)$ by the relevant finite rank subgroup $\Gamma$ of the relevant (semi)abelian variety. However we are working here with $A^{\sharp}$ with all of its  induced structure from the model $K$ of $DCF_0$ or $SCF_{p,1}$, so additional arguments are needed.\\

We are able to deduce the dichotomy statement from Mordell-Lang, but  only in the cases (from \cite{BBP2}) when we know how to prove Mordell-Lang  without going through Zariski   structures,  deducing it from Manin-Mumford.

Here also we will in fact deduce the statement  from the related Manin-Mumford statement:
\begin{theorem}[MM] Let $K$ be an algebraically closed field of characteristic $0$, and $G$ a commutative algebraic group over $K$; or let $k<K$ be algebraically closed fields of characteristic $p$, and $G$ an abelian variety over $K$ with $k$-trace $0$.\\
Let $X$ be an algebraic subvariety of $G$ defined over $K$.\\
Then there are a finite number of torsion points $t_1,\ldots,t_m$, and algebraic subgroups $G_1,\ldots,G_m$ over $K$, such that
$$
X(K)\cap G(K)_{torsion}=\bigcup_{i=1}^m t_i+G_i(K)_{torsion}.
$$ 
\end{theorem}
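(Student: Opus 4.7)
The plan is to reduce, by standard manipulations, to the case where $X$ is an irreducible subvariety whose stabilizer $\text{Stab}_G(X)$ in $G$ is finite. If $H := \text{Stab}_G(X)^\circ$ is positive-dimensional, one passes to the quotient $G/H$ and the image $\overline X$, noting that the torsion cosets of $H$ are themselves a union of $H$ together with finitely many torsion translates; an induction on $\dim G$ then reduces us to the finite-stabilizer case. Under this reduction, it is enough to show that if $X(K)\cap G(K)_{\text{tor}}$ is Zariski dense in $X$ and $\text{Stab}_G(X)$ is finite, then $X$ is a single torsion point.

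For the characteristic $0$ statement, I would first use Chevalley's theorem to write $G$ as an extension of a semiabelian variety $G_1$ by a connected linear group $L$, and further decompose $L$ as an extension of a torus by a unipotent group $U \cong \mathbb{G}_a^r$. Since $U$ is torsion-free, the projection $G \to G_1$ is a bijection on torsion, so torsion points of $X$ are precisely the preimages of torsion points in the image $\pi(X) \subseteq G_1$. This reduces the problem to the semiabelian case, where Manin-Mumford is the classical theorem of Raynaud (for abelian varieties) extended by Hindry to semiabelian varieties; alternatively one can invoke Hrushovski's difference-field proof. In either case, the outcome is that $\pi(X)$ is a finite union of torsion cosets, and pulling back through $\pi$ and using the structure of $L$ gives the required description for $X$.

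For the characteristic $p$ statement, the hypothesis that $A$ has $k$-trace zero is essential. Here I would invoke the theorem of Pink–R\"ossler (or equivalently Hrushovski's statement in \cite{Hrushovski}): for an abelian variety $A$ over $K$ with $k$-trace zero, any subvariety $X$ satisfies the Manin-Mumford conclusion. The underlying mechanism is that the trace-zero hypothesis forces a strong Galois action on the torsion of $A(K)$ (coming from an absence of an isotrivial factor), so that the Galois orbit of any non-coset torsion point is Zariski dense, preventing accidental accumulation. An alternative, closer in spirit to the body of the present paper, is to use the socle analysis together with the rigidity of $A^\sharp$ in separably closed fields.

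The principal obstacle is the characteristic $p$ case: there is no direct Galois/$\ell$-adic proof analogous to Hindry's, precisely because in characteristic $p$ the $\ell$-adic Tate modules of $A$ for $\ell \neq p$ are not rich enough by themselves, and the $p$-adic/crystalline side is delicate. One must either accept the arithmetic input of Pink–R\"ossler or replace it by the model-theoretic framework for separably closed fields. Since the present paper's ambition is precisely to eliminate the appeal to Zariski geometries in the passage from Manin-Mumford to Mordell-Lang, rather than to reprove Manin-Mumford itself, the cleanest strategy is to take the above theorem as a black box from \cite{Hrushovski} or its arithmetic analogues and concentrate the novelty in the reductions carried out in Sections 3 and 4.
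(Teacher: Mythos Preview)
The paper does not prove this theorem at all: it is stated as an input and immediately attributed, in the remark following it, to Hindry \cite{Hindry} in characteristic $0$ (already at the level of generality of arbitrary commutative algebraic groups) and to Pink--R\"ossler \cite{PinkRossler} in characteristic $p$. Your final paragraph correctly identifies this, so in substance you and the paper agree.

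The reduction sketch you give beforehand is extra content not present in the paper. Two remarks on it. First, since Hindry's theorem already applies to arbitrary commutative algebraic groups over an algebraically closed field of characteristic $0$, the Chevalley reduction to the semiabelian case is unnecessary; the paper in fact explicitly notes that it will apply MM to extensions of abelian varieties by vector groups, which are not semiabelian, and this is covered directly by \cite{Hindry}. Second, your characteristic $p$ sketch is right that the trace-zero hypothesis is essential (the paper makes the same point: over $\Fp^{\rm alg}$ every rational point is torsion), and citing Pink--R\"ossler is exactly what the paper does; the alternative you mention via the socle analysis of $A^\sharp$ would be circular here, since in the appendix the paper uses MM as a \emph{premise} to deduce $1$-basedness of $A^\sharp$, not the other way around.
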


\begin{remark}
In characteristic $0$, the statement with this level of generality has been proved by Hindry in \cite{Hindry}. We will use it for $G$ an extension of the abelian variety $A$ by a vector group. In characteristic $p$, the statement was proved by Pink and R\"ossler in \cite{PinkRossler}. The hypothesis of $k$-trace $0$ is mandatory since for an abelian variety over $\Fp^{\rm{alg}}$, every $\Fp^{\rm{alg}}$-rational point is a torsion point.
\end{remark}

As in \cite{BBP2}, we will assume that $K_0=(\mathbb{C}(t))^{\rm{alg}}$ considered as a differential field with the natural extension of  $d/dt$  and $K=K_0^{\rm{diff}}$ its differential closure in characteristic $0$, and $K_0=K=\Fp(t)^{\rm{sep}}$, and $\mathcal U$ an $\aleph_1$-saturated elementary extension of $K$ in characteristic $p$.  In this situation, we can use the ``theorem of the Kernel'':
\begin{theorem}
Let $A$ be an abelian variety over $K_0$, of $C(K_0)$-trace $0$. Then every point in $A^\sharp(K)$ is a torsion point.
\end{theorem}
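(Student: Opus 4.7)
The plan is to split into the two characteristics and reduce in each case to the appropriate Manin-Mumford statement.

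In characteristic $p$, this is precisely R\"ossler's theorem \cite{Roessler-torsion}, already invoked in the proof of Proposition \ref{morleyrank}: for $A$ over $K = \mathbb{F}_p(t)^{\mathrm{sep}}$ with $\mathbb{F}_p^{\mathrm{alg}}$-trace zero, $A^\sharp(K) = p^\infty A(K)$ is contained in $A(K)_{\mathrm{tor}}$. R\"ossler's argument itself is a deduction from the Pink-R\"ossler Manin-Mumford theorem, via graphs of iterates of Frobenius, so this case is already in the spirit of the approach advertised in the introduction to the appendix.

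In characteristic $0$, I would work with the universal vector extension $0 \to V \to \tilde A \to A \to 0$, with projection $\pi$, equipped with its canonical algebraic $D$-group structure. Standard facts give $A^\sharp(K) = \pi(\tilde A^\partial(K))$, where $\tilde A^\partial$ denotes the horizontal sections; since $V$ is torsion-free and divisible, the torsion of $\tilde A$ maps isomorphically onto the torsion of $A$. So given $a \in A^\sharp(K)$, it suffices to lift to $\tilde a \in \tilde A^\partial(K)$ and show that $\tilde a$ is torsion in $\tilde A$. To achieve this, apply Hindry's Manin-Mumford theorem to the Zariski closure $Z \subseteq \tilde A$ of $\mathbb{Z} \cdot \tilde a$: it is a coset of an algebraic subgroup, and the $\mathbb{C}$-trace zero hypothesis on $A$ combined with the constraint that $\tilde a$ lies in $\tilde A^\partial$ should force $Z$ to be finite, i.e.\ $\tilde a$ torsion. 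Concretely, the Manin map $\mu : A(K) \to V$ induced by the $D$-structure on $\tilde A$ has kernel exactly $A^\sharp$, and the classical kernel theorem $\ker \mu \cap A(K_0) = A(K_0)_{\mathrm{tor}}$ can itself be obtained by applying Hindry's MM to the graph of $\mu$ inside $A \times V$.

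The main obstacle is the characteristic $0$ step, in particular the passage from $K_0$-rational points (where the classical kernel theorem applies) to $K$-rational points, $K$ being the differential closure of $K_0$. This descent can be handled using that $A^\sharp$ is type-defined over $K_0$ and has finite Morley rank, together with the fact that all torsion of $A$ already lies in $A(K_0)$ since $K_0$ is algebraically closed: any non-torsion $a \in A^\sharp(K)$ would yield a non-algebraic stationary type over $K_0$ concentrating on some proper type-definable connected subgroup $H < A^\sharp$ over $K_0$, whose $K_0$-realizations would, by the classical kernel theorem, lie in $A(K_0)_{\mathrm{tor}}$ and hence be Zariski dense in the abelian subvariety $\overline{H}$; combined with the $\mathbb{C}$-trace zero hypothesis inherited by $\overline{H}$, this produces the desired contradiction.
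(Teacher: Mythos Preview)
The paper does not prove this statement; it only cites it, pointing to \cite{BP} for characteristic $0$ and \cite{Roessler-torsion} for characteristic $p$ (see the Remark immediately following the Theorem). Your characteristic $p$ reduction to R\"ossler is exactly the paper's citation and is fine.

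Your characteristic $0$ sketch, however, has genuine gaps. First, Manin--Mumford applied to $Z=\overline{\mathbb{Z}\cdot\tilde a}\subseteq\tilde A$ yields nothing: $Z$ is already a closed subgroup of $\tilde A$, and MM concerns the torsion lying on a given subvariety, not whether a given point is torsion. The phrase ``should force $Z$ to be finite'' hides precisely the content of the theorem you are trying to prove. Second, the assertion that Manin's classical kernel theorem over $K_0$ ``can itself be obtained by applying Hindry's MM to the graph of $\mu$ inside $A\times V$'' is unsubstantiated; Manin's theorem is a serious result (Gauss--Manin connection, Picard--Fuchs equations, with later refinements by Chai) and is not known to be a formal consequence of Manin--Mumford. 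Third, the descent paragraph does not work: there is no reason a non-algebraic stationary type $tp(a/K_0)$ in $A^\sharp$ must concentrate on a \emph{proper} connected subgroup $H$ (generically it will not --- you would need $1$-basedness for that, which is exactly what the theorem is being used to establish in Proposition~\ref{dichotomy}); and even granting such an $H$, the proposed contradiction ``$H(K_0)\subseteq A_{\mathrm{tor}}$ and Zariski dense in $\overline H$, contradicting trace $0$'' is no contradiction at all, since every abelian variety has Zariski-dense torsion. Finally, note that $K_0=(\mathbb{C}(t))^{\mathrm{alg}}$ is not a model of $DCF_0$, so there is no elementarity between $K_0$ and $K$ to exploit, and a non-algebraic type over $K_0$ need have no $K_0$-realizations whatsoever.

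The argument in \cite{BP} does not proceed via Manin--Mumford: it combines Manin's classical kernel theorem with structural information about the $D$-group $\tilde A$ (in particular a result of Chai) to control $\tilde A^\partial(K)$ directly.
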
 
\begin{remark}
The characterisic $0$ case is proved in \cite{BP}, and the characteristic $p$ case in \cite{Roessler-torsion}.
The trace $0$ hypothesis is only mandatory in characteristic $0$.
\end{remark}

\smallskip

Recall (from \cite{Pillaybook}) that a type-definable connected commutative group $G$ (in a sufficiently saturated model of a stable theory) is $1$-based if every relatively definable subset of $G^{n}$ is a Boolean combination of translates of relatively definable subgroups (over the same algebraically closed set of parameters as $G$). In the present situation, where $A^\sharp$ is defined over the algebraically closed set of parameters $K$, it suffices that every relatively $K$-definable subset of $A^{\sharp n}$ is is a Boolean combination of translates of relatively $K$-definable subgroups.\\
In the characteristic $p$ case, we proved in \cite{BBP2} that the structure $\mathcal A$, whose underlying set is $A^\sharp=A^\sharp(\mathcal U)$, and whose basic definable sets are the relatively $K$-definable subsets of the cartesian powers $A^{\sharp n}$, has quantifier elimination. It follows that $A^\sharp$ is one-based iff $\mathcal A$ is. But we are now in a definable setting (rather than type-definable), for which the saturation hypothesis is no longer relevant.  Since we also know from \cite{BBP2} that $A^\sharp(K)$, with the same basic definable sets as above, is an elementary substructure of $\mathcal A$, we are allowed to replace $A^\sharp (\mathcal U)$ by $A^\sharp(K)$.

\begin{proposition} \label{dichotomy}
Let $K_0$ and $K$ be as above, and $k=C(K)=C(K_0)$ the subfield of  constants.
Let $A$ be an abelian variety over $K_0$ with $k$-trace $0$. Then
\newline
(i) $A^{\sharp}$ is $1$-based.
\newline
(ii) If moreover $A$ is simple, then $A^{\sharp}$ is  strongly minimal. 
\end{proposition}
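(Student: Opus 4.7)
The plan is to reduce $1$-basedness to the Manin--Mumford theorem via the Theorem of the Kernel, and then to derive (ii) as a corollary using Proposition~\ref{newGsharp}.

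I would first invoke the Theorem of the Kernel---\cite{BP} in characteristic $0$ and \cite{Roessler-torsion} in characteristic $p$---to obtain $A^\sharp(K)\subseteq A(K)_{\rm tors}$; since $A^\sharp$ always contains the (prime-to-$p$) torsion, this embeds $A^\sharp(K)^n$ inside the torsion of $A^n$. For any $K$-algebraic subvariety $X\subseteq A^n$, applying Manin--Mumford to the abelian variety $A^n$ (which still has $k$-trace $0$) gives
\[
X(K)\cap A(K)_{\rm tors}^n \;=\; \bigcup_{i=1}^m\bigl(t_i + G_i(K)_{\rm tors}\bigr),
\]
for torsion points $t_i$ and algebraic subgroups $G_i\leq A^n$. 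Intersecting with $A^\sharp(K)^n$, each nonempty term $(t_i+G_i(K)_{\rm tors})\cap A^\sharp(K)^n$ takes the form $s_i+\bigl(G_i(K)\cap A^\sharp(K)^n\bigr)$ for some $s_i\in A^\sharp(K)^n$---a translate of a relatively $K$-definable subgroup of $A^\sharp(K)^n$.

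Next I would extend this from $X$ algebraic to arbitrary $K$-definable $D\subseteq A^n$, by observing that differentials and $\lambda$-functions degenerate to algebraic data on the torsion locus. In characteristic $0$, differentiating $n\cdot x=0$ yields $\partial x=0$ for every torsion $x$, so every Kolchin-closed condition on $A^\sharp(K)^n$ collapses to a purely algebraic one on coordinates; in characteristic $p$, torsion coordinates lie in the perfect algebraic closure of the base, on which the $\lambda$-functions reduce to (algebraic) $p^k$-root operations, and $\lambda_n$-closed conditions similarly reduce to algebraic ones. Hence for every $K$-definable $D$ there is a $K$-constructible $X\subseteq A^n$ with $D(K)\cap A^\sharp(K)^n = X(K)\cap A^\sharp(K)^n$, so the previous step applies: every relatively $K$-definable subset of $A^\sharp(K)^n$ is a Boolean combination of translates of relatively $K$-definable subgroups. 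In characteristic $p$ the quantifier elimination for $\mathcal A$ and the elementary substructure $A^\sharp(K)\preceq\mathcal A$ from \cite{BBP2} transfer this to the saturated structure $A^\sharp(\mathcal U)$; in characteristic $0$, $A^\sharp$ is outright definable of finite Morley rank, so elementarity alone suffices. This yields (i).

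For (ii), suppose $A$ is simple. By Proposition~\ref{newGsharp} the only connected type-definable subgroups of $A^\sharp$ are $0$ and $A^\sharp$; together with connectedness and finiteness of Morley/$U$-rank, this forces every relatively $K$-definable subgroup of $A^\sharp$ to be either finite or all of $A^\sharp$. Applying (i) with $n=1$, every relatively $K$-definable subset of $A^\sharp$ is a Boolean combination of translates of such subgroups, hence finite or cofinite---i.e.\ $A^\sharp$ is (strongly) minimal. The main obstacle will be the reduction to algebraic sets on the torsion locus in characteristic $p$: while the characteristic $0$ collapse ($\partial x=0$ on torsion) is a one-line computation, the characteristic $p$ analysis of $\lambda$-functions on torsion coordinates is the delicate point, and this is where the quantifier elimination of \cite{BBP2} provides the technical backbone.
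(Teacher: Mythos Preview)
Your overall architecture---Kernel Theorem to reduce to torsion, then Manin--Mumford, then transfer via the QE/elementarity of \cite{BBP2}---matches the paper's. The gap is the step where you collapse Kolchin-closed (resp.\ $\lambda_n$-closed) conditions to algebraic ones on $A^\sharp(K)$.

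In characteristic $0$, the claim ``differentiating $n\cdot x=0$ yields $\partial x=0$'' is simply false: the group law on $A$ is not linear in coordinates, and torsion points of an abelian variety defined over $K_0=(\mathbb{C}(t))^{\rm alg}$ have coordinates in $K_0$, on which $\partial=d/dt$ is certainly nonzero (e.g.\ the $2$-torsion point $(t,0)$ on $y^2=x(x-1)(x-t)$). So Kolchin-closed sets do \emph{not} reduce to algebraic ones on $A^\sharp(K)$. The paper circumvents this by passing to the universal vectorial extension $\bar A$: the point is that $\bar A$ carries a (unique) $D$-group structure, so on $\bar A^\partial$ the derivative is given by a regular algebraic section and every Kolchin-closed set is the trace of an honest Zariski-closed set $Y\subseteq\bar A$. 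Manin--Mumford is then applied to $Y$ inside the commutative algebraic group $\bar A$ (Hindry's version), not inside $A$.

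In characteristic $p$ the same objection applies: torsion coordinates lie in $K_0=\mathbb{F}_p(t)^{\rm sep}$, not in $K_0^{p^\infty}=\mathbb{F}_p^{\rm alg}$, so the $\lambda$-functions are genuinely nontrivial on them and do not reduce to $p^k$-th roots. Here the paper uses the specific structure from \cite{BBP2}: there is an isogeny $\pi:A_n\to A$ and a map $\lambda_n:A(K)\to A_n(K)$ such that every $\lambda_n$-closed subset of $A^\sharp$ is $\lambda_n^{-1}(X_n)$ for an \emph{algebraic} subvariety $X_n\subseteq A_n$; Manin--Mumford is then applied in $A_n$. In both characteristics the passage to an auxiliary algebraic group ($\bar A$ or $A_n$) is what makes the reduction to MM work, and this is precisely the step your argument is missing.

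Your treatment of (ii) is fine and agrees with the paper's.
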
 

\begin{proof}
\newline
(i) By replacing $A$ by a cartesian power, we just have to consider (relatively) definable subset of $A^\sharp$. By quantifier elimination in the ambiant structure, a definable set is a boolean combination of Kolchin-closed sets in characteristic $0$, or of $\lambda_n$-closed sets for some $n$ in characteristic $p$, so we can reduce to those cases.\\
In characteristic $0$, we use the description of $A^\sharp$ given in $\cite{BP}$, with the same notations: there is an extension $\pi : \bar A \to A$ of $A$ by a vector group, such that $\bar A$ is endowed with a unique $D$-group structure, and such that $\pi$ induces an isomorphism (in the category of differential algebraic groups) between $\bar A^\partial$ and $A^\sharp$. If $X$ is a Kolchin-closed set in $A^\sharp$, $\pi^{-1}(X)$ is a Kolchin-closed set in $\bar A^\partial$, which can be written as $Y\cap \bar A^\partial$ for some algebraic subvariety $Y$ of $\bar A$ since, in $\bar A^\partial$, the derivation coincides with a regular algebraic map. Furthermore, as $\pi$ has a torsion free kernel, it induces an isomorphism between torsion points of $\bar A$ and torsion points in $A$. Now we can use MM for $Y$ in $\bar A$: $Y(K) \cap \bar A(K)_{torsion}$ is of the required form, and so is its isomorphic image by $\pi$, which is $X(K) \cap A(K)_{torsion}=X(K)$.\\
In characteristic $p$, we use the description of $A^\sharp$ and its $\lambda_n$-closed relatively definable subsets given in Section 3 of \cite{BBP2}. There is an isogeny of abelian varieties over $K$, $\pi : A_n \to A$, and a definable map $\lambda_n:A(K) \to A_n(K)$, which induce an isomorphism from $A_n(K)$ to $p^n A(K) < A^\sharp$, with inverse isomorphism induced by $\lambda_n$. Furthermore, for $X$ a relatively definable $\lambda_n$-closed subset of $A^\sharp$, there is an algebraic subvariety $X_n$ of $A_n$ such that $X=\lambda_n^{-1}(X_n)$. Now we use MM for $X_n$ in $A_n$ (which is still an abelian variety with $C(K_0)$-trace $0$, see \cite{conrad} for the basic properties of the trace): $X_n(K) \cap A_n(K)_{torsion}$ is of the required form, and so is $X$, its image by $\pi_n$.
\newline 
(ii) If $A$ is simple, then $A^{\sharp}$ is $g$-minimal, i.e. has no infinite (relatively) definable proper subgroup. So by part (i), every (relatively) definable subset of $A^{\sharp}$ is finite or cofinite. 
\end{proof}

\section{Appendix II: $Ext(A,T)$}

We will work with two fields $k\subset K$, $k$ algebraically closed, in arbitrary characteristic. We use results from \cite{Serre} about $Ext(A,T)$, the moduli space for extensions of $A$ by $T$, for given abelian variety $A$ and torus $T=\Gm^s$ over $k$, that we recall now.

The mapping $(A,T)\mapsto Ext(A,T)$ can be made into a bifunctor, covariant in $T$ and contravariant in $A$, in the following way.\\
If $G\in Ext(A,T)$ and $f:T \to T'$ is a homomorphism, $f_*(G)$ is characterized as the unique element in $Ext(A,T')$ such that there is a homomorphism $u:G\to f_*(G)$ making the diagram commute:
~\\
\begin{center}
\begin{pspicture}(8,1)
\rput(0,1){\rnode{A}{$0$}}
\rput(2,1){\rnode{B}{$T$}}
\rput(4,1){\rnode{C}{$G$}}
\rput(6,1){\rnode{D}{$A$}}
\rput(8,1){\rnode{E}{$0$}}
\rput(0,0){\rnode{F}{$0$}}
\rput(2,0){\rnode{G}{$T'$}}
\rput(4,0){\rnode{H}{$f_*(G)$}}
\rput(6,0){\rnode{I}{$A$}}
\rput(8,0){\rnode{J}{$0$}}
\psset{arrows=->,nodesep=3pt,shortput=tablr,linewidth=0.1pt}
\ncline{A}{B}
\ncline{B}{C} 
\ncline{C}{D}
\ncline{D}{E}
\ncline{F}{G}
\ncline{G}{H}
\ncline{H}{I}
\ncline{I}{J}
\ncline{B}{G}<{$f$}
\ncline{C}{H}<{$u$}
\ncline{D}{I}<{id} 
\end{pspicture}
\end{center}
~\\
Similarly, for a homomorphism $g:A'\to A$, $g^*(G)$ is characterized as the unique element in $Ext(A',T)$ such that there is a homomorphism $u:g^*(G)\to G$ making the diagram commute:
~\\
\begin{center}
\begin{pspicture}(8,1)
\rput(0,1){\rnode{A}{$0$}}
\rput(2,1){\rnode{B}{$T$}}
\rput(4,1){\rnode{C}{$g^*(G)$}}
\rput(6,1){\rnode{D}{$A'$}}
\rput(8,1){\rnode{E}{$0$}}
\rput(0,0){\rnode{F}{$0$}}
\rput(2,0){\rnode{G}{$T$}}
\rput(4,0){\rnode{H}{$G$}}
\rput(6,0){\rnode{I}{$A$}}
\rput(8,0){\rnode{J}{$0$}}
\psset{arrows=->,nodesep=3pt,shortput=tablr,linewidth=0.1pt}
\ncline{A}{B}
\ncline{B}{C} 
\ncline{C}{D}
\ncline{D}{E}
\ncline{F}{G}
\ncline{G}{H}
\ncline{H}{I}
\ncline{I}{J}
\ncline{B}{G}<{id}
\ncline{C}{H}<{$u$}
\ncline{D}{I}<{$g$} 
\end{pspicture}
\end{center}
~\\
We will also use that the existence of a commutative diagram
~\\
\begin{center}
\begin{pspicture}(8,1)
\rput(0,1){\rnode{A}{$0$}}
\rput(2,1){\rnode{B}{$T$}}
\rput(4,1){\rnode{C}{$G$}}
\rput(6,1){\rnode{D}{$A$}}
\rput(8,1){\rnode{E}{$0$}}
\rput(0,0){\rnode{F}{$0$}}
\rput(2,0){\rnode{G}{$T'$}}
\rput(4,0){\rnode{H}{$G'$}}
\rput(6,0){\rnode{I}{$A'$}}
\rput(8,0){\rnode{J}{$0$}}
\psset{arrows=->,nodesep=3pt,shortput=tablr,linewidth=0.1pt}
\ncline{A}{B}
\ncline{B}{C} 
\ncline{C}{D}
\ncline{D}{E}
\ncline{F}{G}
\ncline{G}{H}
\ncline{H}{I}
\ncline{I}{J}
\ncline{B}{G}<{$f$}
\ncline{C}{H}<{$u$}
\ncline{D}{I}<{$g$} 
\end{pspicture}
\end{center}
~\\
is equivalent to the equality $f_*(G)=g^*(G')$ in $Ext(A,T')$.\\
These constructions are used to put a commutative group law on $Ext(A,T)$, with the trivial extension $T\times A$ as neutral element. With these group laws, the maps $f_*$ and $g^*$ are now homomorphisms. We will describe below the multiplication by $n$ for this group law. Now, the functor $K\mapsto Ext(A\times_k K,T\times_k K)$, from field extensions of $k$ into groups, is represented by $\hat A^s$, where $\hat A$ is the dual abelian variety of $A$. In other words, the extensions (implicitly: defined over $K$) of $A\times_k K$ by $T\times_k K$, up to isomorphism, are parametrized by $\hat A^s(K)$. 

Let $0\to T \to G \stackrel{f}{\to} A \to 0$ be an element in $Ext(A,T)$. Via the map $f^n$, $G^n$ is an element of $Ext(A^n,T^n)$ ($n\ge 1$). For any algebraic group $H$, let $d_{nH} : H \to H^n$ be the diagonal map and $m_{nH} : H^n \to H$ be the multiplication map. Then, in $Ext(A,T)$, $[n]G=d_{nA} ^*m_{nT*}G^n=m_{nT*}d_{nA}^*G^n$.\\
Now consider the commutative diagram
~\\
\begin{center}
\begin{pspicture}(8,2)
\rput(0,2){\rnode{A}{$0$}}
\rput(2,2){\rnode{B}{$T$}}
\rput(4,2){\rnode{C}{$G$}}
\rput(6,2){\rnode{D}{$A$}}
\rput(8,2){\rnode{E}{$0$}}
\rput(0,1){\rnode{F}{$0$}}
\rput(2,1){\rnode{G}{$T^n$}}
\rput(4,1){\rnode{H}{$G^n$}}
\rput(6,1){\rnode{I}{$A^n$}}
\rput(8,1){\rnode{J}{$0$}}
\rput(0,0){\rnode{K}{$0$}}
\rput(2,0){\rnode{L}{$T$}}
\rput(4,0){\rnode{M}{$G$}}
\rput(6,0){\rnode{N}{$A$}}
\rput(8,0){\rnode{O}{$0$}}
\psset{arrows=->,nodesep=3pt,shortput=tablr,linewidth=0.1pt}
\ncline{A}{B}
\ncline{B}{C}
\ncline{C}{D}^{$f$}
\ncline{D}{E}
\ncline{F}{G}
\ncline{G}{H}
\ncline{H}{I}^{$f^n$}
\ncline{I}{J}
\ncline{B}{G}<{$d_n$}
\ncline{C}{H}<{$d_n$}
\ncline{D}{I}>{$d_n$} 
\ncline{K}{L}
\ncline{L}{M} 
\ncline{M}{N}^{$f$}
\ncline{N}{O}
\ncline{G}{L}<{$m_n$}
\ncline{H}{M}<{$m_n$}
\ncline{I}{N}>{$m_n$}
\end{pspicture}
\end{center}
~\\
The composition on the columns is $[n]$. From the upper half of the diagram, we get that $d_{nA}^*G^n=d_{nT*}G$ in $Ext(A,T^n)$. From the lower half, we get that $m_{nA}^*G=m_{nT*}G^n$ in $Ext(A^n,T)$.
Hence we get 
$$[n]G=d_{nA}^*m_{nT*}G^n=d_{nA}^*m_{nA}^*G=(m_{nA}d_{nA})^*G=[n]_A^*G$$
and also 
$$[n]G=m_{nT*}d_{nA}^*G^n=m_{nT*}d_{nT*}G=(m_{nT}d_{nT})_*G=[n]_{T*}G.$$

\begin{proposition} \label{torsionsplit}
$G$ is an almost split extension of $A$ by $T$ if and only if it is a torsion point in $Ext(A,T)$.
\end{proposition}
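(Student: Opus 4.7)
The plan is to use the explicit formula $[n]G = [n]_A^* G = [n]_{T*}G$ in $Ext(A,T)$, which was recalled just before the statement, to translate torsion into the existence of a certain algebraic section. Recall that for an element $[G]\in Ext(A,T)$ represented by $0\to T\to G\xrightarrow{f} A\to 0$, and any homomorphism $g:A'\to A$, the pullback $g^*(G)$ is realized as the fiber product $A'\times_A G$, and is the zero element of $Ext(A',T)$ precisely when this pullback splits as $T\times A'$, i.e. when there is a homomorphism $\phi:A'\to G$ with $f\circ\phi=g$.

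\textbf{Torsion implies almost split.} Suppose $[n]G=0$ in $Ext(A,T)$. By the identity $[n]G=[n]_A^{*}G$, the extension $[n]_A^{*}G\in Ext(A,T)$ is trivial, so there is a homomorphism $\phi:A\to G$ of algebraic groups such that $f\circ\phi=[n]_A$. Let $B:=\phi(A)$. Since $f\circ\phi=[n]_A$ has finite kernel $A[n]$, so does $\phi$; hence $B$ is an abelian subvariety of $G$ isogenous to $A$. Furthermore $f|_B:B\to A$ is surjective (as $f\phi=[n]_A$ is) and has kernel $B\cap T$, which must therefore be finite (it is contained in $\phi(A[n])$). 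Finally $G=B+T$, because $f(B)=A=f(G)$ and $\ker f=T$. This is exactly the definition of $G$ being almost split.

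\textbf{Almost split implies torsion.} Suppose $G=T+B$ with $B$ a semiabelian (necessarily abelian) subvariety of $G$ and $T\cap B$ finite. Then $g:=f|_B:B\to A$ is an isogeny. The pullback $g^*G\in Ext(B,T)$ is trivial, because the inclusion $B\hookrightarrow G$ provides a homomorphic section $b\mapsto(b,b)$ of $B\times_A G\to B$. Now pick a dual isogeny $h:A\to B$ such that $g\circ h=[n]_A$ for some $n\geq 1$ (which exists for isogenies of abelian varieties). Functoriality of $(\cdot)^*$ gives
\[
 [n]G\;=\;[n]_A^{*}G\;=\;h^{*}g^{*}G\;=\;h^{*}(0)\;=\;0,
\]
so $[G]$ is $n$-torsion in $Ext(A,T)$.

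The main things to be careful about are (i) that a splitting of an extension corresponds genuinely to a homomorphic section (not merely a set-theoretic one), which is standard for pushouts/pullbacks in the category of commutative algebraic groups, and (ii) the existence of the dual isogeny $h:A\to B$ with $g\circ h=[n]_A$, which follows from the standard fact that every isogeny of abelian varieties is a factor of multiplication by some integer. No other obstacle arises.
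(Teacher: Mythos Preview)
Your proof is correct, and it is essentially the mirror image of the paper's argument: both exploit the identity $[n]G=[n]_A^{*}G=[n]_{T*}G$ established just before the proposition, but you work on the $A$-side (pullbacks and sections) while the paper works on the $T$-side (pushforwards). For ``torsion $\Rightarrow$ almost split'' the paper notes that the pushforward diagram for $[n]_{T*}$ produces an isogeny $u:G\to [n]G$, so if $[n]G\cong T\times A$ then $G$ is isogenous to $T\times A$; you instead read $[n]_A^{*}G=0$ as the existence of a lift $\phi:A\to G$ of $[n]_A$ and exhibit the complement $B=\phi(A)$ explicitly. For the converse, the paper starts from an isogeny $\phi:G\to T\times A$, extracts induced isogenies $\phi_T,\phi_A$, gets $\phi_{T*}G=\phi_A^{*}(T\times A)=0$, and then chooses $\theta$ on the torus with $\theta\circ\phi_T=[n]_T$; you use the inclusion $B\hookrightarrow G$ to see $g^{*}G=0$ and invoke the dual isogeny $h$ on the abelian side with $g\circ h=[n]_A$. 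Your route has the minor advantage of naming the abelian complement $B$ directly, while the paper's is slightly more uniform in treating both directions through pushforwards; neither requires anything beyond what the other does.
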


\begin{proof}
One direction uses the general fact that $G$ is isogenous to $[n]G$: Indeed, since $[n]G=[n]_{T*}G$, 
there is an homomorphism $u$ such that the following diagram commutes
~\\
\begin{center}
\begin{pspicture}(8,1)
\rput(0,1){\rnode{A}{$0$}}
\rput(2,1){\rnode{B}{$T$}}
\rput(4,1){\rnode{C}{$G$}}
\rput(6,1){\rnode{D}{$A$}}
\rput(8,1){\rnode{E}{$0$}}
\rput(0,0){\rnode{F}{$0$}}
\rput(2,0){\rnode{G}{$T$}}
\rput(4,0){\rnode{H}{$[n]G$}}
\rput(6,0){\rnode{I}{$A$}}
\rput(8,0){\rnode{J}{$0$}}
\psset{arrows=->,nodesep=3pt,shortput=tablr,linewidth=0.1pt}
\ncline{A}{B}
\ncline{B}{C} 
\ncline{C}{D}^{$f$}
\ncline{D}{E}
\ncline{F}{G}
\ncline{G}{H}
\ncline{H}{I}
\ncline{I}{J}
\ncline{B}{G}<{$[n]_{T}$}
\ncline{C}{H}<{$u$}
\ncline{D}{I}<{id} 
\end{pspicture}
\end{center}
~\\
Clearly, $u$ is an isogeny because $[n]_{T}$ and $id$ are. We could have made the same argument using that $[n]G=[n]_A^*G$. Hence, if $[n]G= T \times A$, $G$ is almost split.\\
Conversely, if $\phi: G \to T \times A$ is an isogeny, $\phi$ must map the linear part of $G$ into the linear part of $T\times A$, hence it induces a commutative diagram
~\\
\begin{center}
\begin{pspicture}(8,1)
\rput(0,1){\rnode{A}{$0$}}
\rput(2,1){\rnode{B}{$T$}}
\rput(4,1){\rnode{C}{$G$}}
\rput(6,1){\rnode{D}{$A$}}
\rput(8,1){\rnode{E}{$0$}}
\rput(0,0){\rnode{F}{$0$}}
\rput(2,0){\rnode{G}{$T$}}
\rput(4,0){\rnode{H}{$T\times A$}}
\rput(6,0){\rnode{I}{$A$}}
\rput(8,0){\rnode{J}{$0$}}
\psset{arrows=->,nodesep=3pt,shortput=tablr,linewidth=0.1pt}
\ncline{A}{B}
\ncline{B}{C} 
\ncline{C}{D}
\ncline{D}{E}
\ncline{F}{G}
\ncline{G}{H}
\ncline{H}{I}
\ncline{I}{J}
\ncline{B}{G}<{$\phi_{T}$}
\ncline{C}{H}<{$\phi$}
\ncline{D}{I}<{$\phi_A$} 
\end{pspicture}
\end{center}
~\\
Such a diagram implies that $\phi_{T*}G=\phi_A^* (T\times A)=T\times A$, since $\phi_A^*$ is a group homomorphism. Now, take $\theta :T \to T$ such that $\theta \circ \phi_T = [n]_T$ for some $n\ge 1$. We obtain $[n]G = [n]_{T*}G=\theta_* \phi_{T*} G =\theta_* (T\times A) = T \times A$, i.e. $G$ is a torsion point.
\end{proof}


\begin{proposition} \label{isogenousdescent} Recall that $A$ is defined over $k$. 
  Let $G \in Ext(A\times_k K,T\times_k K)$ be  defined over $K$. If $G$ is isogenous to some $H\times_k K$ for $H$
  a semiabelian variety over $k$, then $G$ descends to $k$.
\end{proposition}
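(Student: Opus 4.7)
The plan is to show that the extension class $[G] \in \mathrm{Ext}(A\times_k K, T\times_k K) \cong \hat A^s(K)$ actually lies in the image of $\hat A^s(k)$, which by the representability recalled just before the proposition is exactly the condition that $G$ descend to $k$. The isogeny $\phi : G \to H\times_k K$ gives, in the usual way, a commutative diagram of extensions: restricting to torus parts yields an isogeny $\phi_T : T\times_k K \to T_H\times_k K$, and passing to abelian quotients yields an isogeny $\phi_A : A\times_k K \to A_H\times_k K$, where $T_H$ (resp. $A_H$) denotes the torus (resp. abelian) part of $H$, both defined over $k$.

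The next step will be to descend $\phi_T$ and $\phi_A$ to $k$. For $\phi_T$: since $k$ is algebraically closed, both $T$ and $T_H$ are split tori of the same dimension $s$ (they are isogenous), so morphisms between them are determined by integer matrices on character lattices and are in particular defined over the prime field. For $\phi_A$: a standard fact for abelian varieties over an algebraically closed base gives $\mathrm{Hom}_K(A_K, A_{H,K}) = \mathrm{Hom}_k(A, A_H)$. Call the descended maps $\bar\phi_T$ and $\bar\phi_A$. The commutativity of the diagram, as recalled in the excerpt, translates into the identity $\bar\phi_{T*}[G] = \bar\phi_A^*[H\times_k K]$ inside $\mathrm{Ext}(A\times_k K, T_H\times_k K)\cong \hat A^s(K)$.

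Now $[H] \in \hat{A_H}^s(k)$, and $\bar\phi_A^*$ is defined over $k$, so the right-hand side lies in the image of $\hat A^s(k) \hookrightarrow \hat A^s(K)$. Hence $\bar\phi_{T*}[G] \in \hat A^s(k)$. Choose, as in the discussion preceding Proposition \ref{torsionsplit}, an integer $n \geq 1$ and a homomorphism $\psi_T : T_H \to T$ (over $k$) with $\psi_T \circ \bar\phi_T = [n]_T$; then $[n][G] = \psi_{T*}(\bar\phi_{T*}[G]) \in \hat A^s(k)$.

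The remaining, and main, obstacle is to pass from $[n][G] \in \hat A^s(k)$ to $[G] \in \hat A^s(k)$. Since $\hat A^s(k)$ is $n$-divisible ($k$ being algebraically closed), we may pick $y \in \hat A^s(k)$ with $[n]y = [n][G]$, so $[G]-y \in \hat A^s[n](K)$; it thus suffices to show that $\hat A^s[n](K) \subseteq \hat A^s(k)$. In characteristic $0$ this is automatic as $\hat A^s[n]$ is étale over $k$ algebraically closed. In characteristic $p$ one uses the connected-étale decomposition $\hat A^s[n] = \hat A^s[n]^\circ \oplus \hat A^s[n]^{et}$ valid over the perfect field $k$: the étale part has all its points defined over $k$ (so $\hat A^s[n]^{et}(K) = \hat A^s[n]^{et}(k)$), while the infinitesimal part $\hat A^s[n]^\circ = \mathrm{Spec}(R)$, with $R$ a local Artin $k$-algebra with residue field $k$, has only the trivial $K$-point (any $k$-algebra map $R \to K$ must factor through the residue field). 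Hence $\hat A^s[n](K) = \hat A^s[n]^{et}(k) \subseteq \hat A^s(k)$, giving $[G] \in \hat A^s(k)$ as required.
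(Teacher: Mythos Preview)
Your proof is correct and follows essentially the same route as the paper: produce the commutative diagram of extensions from the isogeny, descend the torus and abelian parts of $\phi$ to $k$ (the paper cites Chow for the abelian part), deduce $\phi_{T*}[G]\in\hat A^s(k)$, and then compose with a quasi-inverse of $\phi_T$ to get $[n][G]\in\hat A^s(k)$. The only notable difference is that you spell out carefully why $[n][G]\in\hat A^s(k)$ forces $[G]\in\hat A^s(k)$ via the connected--\'etale decomposition of $\hat A^s[n]$, whereas the paper simply invokes ``$k$ algebraically closed'' for this step; your extra care is warranted in characteristic~$p$ but not a genuinely different argument.
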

\begin{proof}
Write $H\in Ext(A',T)$ for some abelian variety $A'$, necessarily over $k$. We assume that there is an isogeny $\phi :G \to H_K$; since $\phi$ must map the linear part of $G$ into the linear part of $H_K$, it induces isogenies $\phi_T:T_K \to T_K$ and $\phi_A:A_K \to A'_K$, such that the following diagram commutes
~\\
\begin{center}
\begin{pspicture}(8,1)
\rput(0,1){\rnode{A}{$0$}}
\rput(2,1){\rnode{B}{$T_K$}}
\rput(4,1){\rnode{C}{$G$}}
\rput(6,1){\rnode{D}{$A_K$}}
\rput(8,1){\rnode{E}{$0$}}
\rput(0,0){\rnode{F}{$0$}}
\rput(2,0){\rnode{G}{$T_K$}}
\rput(4,0){\rnode{H}{$H_K$}}
\rput(6,0){\rnode{I}{$A'_K$}}
\rput(8,0){\rnode{J}{$0$}}
\psset{arrows=->,nodesep=3pt,shortput=tablr,linewidth=0.1pt}
\ncline{A}{B}
\ncline{B}{C} 
\ncline{C}{D}
\ncline{D}{E}
\ncline{F}{G}
\ncline{G}{H}
\ncline{H}{I}
\ncline{I}{J}
\ncline{B}{G}<{$\phi_T$}
\ncline{C}{H}<{$\phi$}
\ncline{D}{I}<{$\phi_A$} 
\end{pspicture}
\end{center}
~\\ 
Since $A$ and $A'$ are abelian varieties over $k$, we know from Chow (\cite{Chow}) that there is an isogeny $\psi : A \to A'$ such that $\phi_A = \psi_K$ is obtained by base change. It follows that, in $Ext(A_K,T_K)$, $\phi_{T*}G=\phi_A^*H_K=\psi_K^* H_K=(\psi^*H)_K$. Since $\psi^*H$ is over $k$, it is a $k$-rational point of ${\hat A}^s$. Furthermore, there is an isogeny $\theta : T_K \to T_K$ such that $\theta \circ \phi_T=[n]_T$ for some integer $n\ge 1$. Note that $\phi_T$ and $\theta$ are actually defined over $k$. Hence $[n]G = [n]_{T*}G=\theta_* \phi_{T*} G \in {\hat A}^s (k)$, with $k$ algebraically closed, and then
$G\in {\hat A}^s(k)$, that is, $G$ descends to $k$.
\end{proof}

\begin{remark} In characteristic $0$, the argument is much simpler and more general (it works for any semiabelian variety): since there is no separability issue, we are allowed to take the quotient by the kernel of an isogeny. But in characteristic $p$, we have to restrict to particular cases, since we know examples of abelian varieties $A$ over $K$, which are isogenous to some $B_K$ for some $B$ over $k$, but such that $A$ does not descend to $k$ (see \cite{BBP1}).
\end{remark}

{\em Franck Benoist and Elisabeth Bouscaren 

Laboratoire de Math\'ematiques d'Orsay

Universit\'e Paris-Sud, CNRS

91405 Orsay, France.}

franck.benoist@math.u-psud.fr

elisabeth.bouscaren@math.u-psud.fr

\medskip

{\em Anand Pillay

Department  of Mathematics

University of  Notre Dame

Notre Dame, IN 46556 USA.}

Anand.Pillay.3@nd.edu


\begin{thebibliography}{99}

\bibitem{BBP1} F. Benoist, E. Bouscaren, and A. Pillay, Semiabelian
  varieties over separably closed fields, maximal divisible subgroups,
  and exact sequences, Journal of the Institute of Mathematics of
  Jussieu 15 (2016), no. 1, 29-69.

\bibitem{BBP2} F. Benoist, E. Bouscaren, and A. Pillay, On function field
  Mordell-Lang and Manin-Mumford, Journal of
  Mathematical Logic 16 (2016), no. 1. DOI: 10.1142/S021906131650001X

\bibitem{BP} D. Bertrand and A.Pillay,  A Lindemann-Weierstrass theorem for semiabelian varieties over function fields, Journal AMS 23 (2010), 491-533. 

\bibitem{bouscaren} E. Bouscaren, Proof of the Mordell-Lang conjecture, in {\em Model theory and Algebraic geometry},  E. Bouscaren Ed., Lecture Notes in
  Mathematics 1696, 2nd edition, Springer, 1999. 

\bibitem{BouscarenDelon} E. Bouscaren and F. Delon, Groups definable in separably closed fields, Transactions of the AMS 354 (2002),vol. 3, 945-966.

\bibitem{BD} E. Bouscaren and F. Delon, Minimal groups in separably closed fields,  J. Symbolic Logic 67 (2002), no. 1, 239-259.

\bibitem{Chow} W.L. Chow, Abelian varieties over function fields, Transactions of the AMS 78 (1955), 253-275. 

\bibitem{conrad} B. Conrad, Chow's K/k-image and K/k-trace, and the Lang-N\'eron theorem, Enseign. Math. (2) 52 (2006), no. 1-2, 37-108.

\bibitem{Hindry} M. Hindry, Autour d'une conjecture de Serge Lang, Invent. Math. 94 (1988), no. 3, 575-603.

\bibitem{Hindry2} M. Hindry, Introduction to abelian varieties and the Lang Conjecture, in {\em Model theory and Algebraic geometry}, E. Bouscaren Ed., Lecture Notes in   Mathematics 1696, 2nd edition, Springer, 1999.

\bibitem{Hrushovski} E. Hrushovski, The Mordell-Lang conjecture for function fields, Journal AMS 9 (1996), 667-690.

\bibitem{lascar} D. Lascar, $\omega$-stable groups, in {\em Model theory
  and algebraic geometry}, E. Bouscaren Ed., Lecture Notes in
  Mathematics 1696, 2nd edition, Springer, 1999.

\bibitem{MMP} D. Marker, M. Messmer, A. Pillay,  {\em Model theory of Fields}, Lecture Notes in Logic 5, Second edition, ASL-AK Peters, 2003.


\bibitem{OmarAziz} A. Omar Aziz, {\em Type definable stable groups and separably closed fields}, Ph. D. thesis, Leeds, 2012. 



\bibitem{Pillay-stability} A. Pillay, {\em An Introduction to Stability Theory}, Oxford University Press, 1983. 

\bibitem{Pillaybook} A. Pillay {\em Geometric Stability Theory}, Oxford University Press, 1996. 

\bibitem{Pillay-L} A. Pillay, The model-theoretic content of Lang's conjecture, {\em Model theory and algebraic geometry}, edited by E. Bouscaren, Lecture Notes in Mathematics 1696, Springer, 1996.

\bibitem{Pillay-ML} A. Pillay, Mordell-Lang in characteristic zero revisited, Compositio Math. 140 (2004), no. 1,  64-68. 

\bibitem{PinkRossler} R. Pink and D. R\"ossler, On $\psi$-invariant subvarieties of semiabelian varieties and the Manin-Mumford conjecture, J. Algebraic Geom. 13 (2004), no. 4, 771-798. 

\bibitem{Poizatbook} B. Poizat, {\em Stable groups}, Mathematical Surveys and Monographs, Vol. 87, American Mathematical Society, 2001.

\bibitem{Robinson} A. Robinson, Solution of a problem of Tarski, Fund. Math. 47 (1959), 179-204. 

\bibitem{Roessler-torsion} D. R\"ossler,  Infinitely $p$-divisible points on abelian varieties defined over function fields of characteristic $p>0$, Notre Dame Journal of Formal Logic 54 (2013),  no. 3-4, 579-589.

\bibitem{Serre} J. P. Serre, {\em Algebraic groups and class fields}, Springer, 1988. 

\bibitem{Wagnerbook} F. Wagner, {\em Stable groups}, Cambridge University Press, 1997.

\bibitem{Ziegler} M. Ziegler, Separably closed fields with Hasse derivations, J. Symbolic Logic 68 (2003), no. 1, 311-318. 

\end{thebibliography}
\end{document}